\documentclass[12pt]{amsart}
 \usepackage[colorlinks,citecolor=blue,urlcolor=blue,linkcolor=blue]{hyperref}
\usepackage[top=1 in,bottom=1 in, left=1 in, right= 1 in, includehead,includefoot]{geometry}
\RequirePackage{amsthm,amsmath,amsfonts,amssymb}

\makeatletter
\renewcommand*\env@matrix[1][\arraystretch]{%
  \edef\arraystretch{#1}%
  \hskip -\arraycolsep
  \let\@ifnextchar\new@ifnextchar
  \array{*\c@MaxMatrixCols c}}
\makeatother

\usepackage{enumerate}


 \usepackage{framed,color,pgf,hyperref}

\newcommand{\arxivc}[1]{}
 \usepackage{tikz}
 \usetikzlibrary{patterns}
\usepackage[sort&compress]{natbib}
 \usepackage{url}
\newcommand{\comment}[1]{}
\newcommand{\commentYZ}[1]{}
\newcommand{\longcomment}[1]{}
\renewcommand{\longcomment}[1]{\ovalbox{\begin{minipage}{1.1\textwidth}\color{blue}#1\end{minipage}}}
\renewcommand{\comment}[1]{{\color{blue}\ovalbox{#1}}}
\newcommand{\hide}[1]{{\color{blue}\ovalbox{\tiny (hidden text)}}}

\def\fddto{\stackrel{\rm f.d.d.}{\Longrightarrow}}
\newcommand{\ind}{{\bf 1}}

\def\g{{\mathsf q}}
\def\p{{\mathsf p}}
\newcommand{\eqnh}{\begin{eqnarray*}}
\newcommand{\eqne}{\end{eqnarray*}}
\newcommand{\eqnhn}{\begin{eqnarray}}
\newcommand{\eqnen}{\end{eqnarray}}
\newcommand{\equh}{\begin{equation}}
\newcommand{\eque}{\end{equation}}

  
    \def\i{{\textnormal i}}

\def\topp#1{^{(#1)}}

\def\abs#1{\left|#1\right|}

\def\ccbb#1{\left\{#1\right\}}

\def\pp#1{\left(#1\right)}

\def\bb#1{\left[#1\right]}

\def\floor#1{\left\lfloor #1 \right\rfloor}

\def\vv#1{{\boldsymbol #1}}
\def\vvalpha{{\vv\alpha}}
\def\vvbeta{{\vv\beta}}

\def\qmwith{\quad\mbox{ with }\quad}

\def\wt#1{\widetilde{#1}}


\def\weakto{\Rightarrow}



\def\fddto{\xrightarrow{\textit{f.d.d.}}}
\renewcommand{\ind}{{\bf 1}}

 \def\re{\textnormal {Re}}
 \def\im{\textnormal {Im}}

\theoremstyle{plain}
\newtheorem{theorem}{Theorem}[section]
\newtheorem{corollary}{Corollary}[section]
\newtheorem{lemma}[theorem]{Lemma}
\newtheorem{proposition}[theorem]{Proposition}

\theoremstyle{definition}

\theoremstyle{remark}

\newtheorem{remark}{Remark}[section]

\newtheorem{assumption}{Assumption}[section]


\newcommand{\calM}{\mathcal{M}}

\def\<{\langle}
\def\>{\rangle}

\usepackage{dsfont}
\newcommand{\RR}{\mathds{R}}
\newcommand{\CC}{\mathds{C}}

\newcommand{\ZZ}{\mathds{Z}}



\newcommand{\eps}{\varepsilon}
\newcommand{\la}{\lambda}

\newcommand{\Jz}[2]{m_{#2}(#1)}



\usepackage{graphicx,fancybox,pgf,amsmath,float,amssymb}
\usepackage{fancybox,graphicx,dsfont,pgf}
\usepackage{subfigure,color,wrapfig}

\usepackage{latexsym}



\newcommand{\EE}{\mathds{E}}
\renewcommand{\Pr}{\mathds{P}}

\def\<{\langle}
\def\>{\rangle}


\def\C{ {\mathsf c}}

\newcommand{\mM}{{\vv M}}
\newcommand{\mW}{{\vv W}}
\newcommand{\mI}{{\vv I}}

\numberwithin{equation}{section}

\def\c{{\CC}}
\def\r{{\RR}}



\title{Random Motzkin paths near boundary}
\title{Limits of Random Motzkin paths  with KPZ related asymptotics}

\author{W{\l}odzimierz Bryc}
\address
{
W{\l}odzimierz Bryc\\
Department of Mathematical Sciences\\
University of Cincinnati\\
2815 Commons Way\\
Cincinnati, OH, 45221-0025, USA.
}
\email{wlodzimierz.bryc@uc.edu}

\author{Alexey Kuznetsov}
\address
{
Alexey Kuznetsov\\
Department of Mathematics and Statistics\\
York University, 4700 Keele Street
\\ Toronto, Ontario, M3J 1P3, Canada
}
\email{akuznets@yorku.ca}
\author{Jacek Weso{\l}owski}
\address{Jacek Weso{\l}owski, Faculty of Mathematics and Information Science,
Warsaw University of Technology, pl. Politechniki 1, 00-661
Warszawa, Poland}
\email{jacek.wesolowski@pw.edu.pl}

\keywords{Random Motzkin paths; scaling limit; discrete Bessel process; matrix ansatz; Yakubovich's heat kernel}
\subjclass[2020]
{60C05;  
60F05; 
}

\begin{document}\sloppy
\begin{abstract}
We study Motzkin paths of length $L$ with general weights on the edges and endpoints. We  investigate the limit behavior of the initial and final segments of the random Motzkin path viewed as a pair of processes starting from each of the two endpoints as $L$ becomes large.  We  then study macroscopic limits of the resulting processes, where in two different regimes we obtain Markov processes that appeared in the description of the stationary measure for the KPZ  equation on the half-line and of conjectural stationary measure of the hypothetical KPZ fixed point on the half-line.
Our results rely on the behavior of the Al-Salam--Chihara polynomials in the neighbourhood of the right end of their orthogonality interval and on the limiting properties of the $q$-Pochhammer and $q$-Gamma functions
 as $q\nearrow 1$.
 \end{abstract}
\maketitle

\section{Introduction and main results}\label{sec:Intro}
In this paper we study  Markov processes that arise as limits of  random Motzkin paths with random endpoints.
A  Motzkin path of length $L$ is a sequence of steps on the integer lattice that starts at point  $(0,n_0)$  with the initial altitude $n_0$ and ends at point $(L, n_L)$ at the final   altitude   $n_L$
for some non-negative integers $n_0,n_L,L$.
Steps can be upward, downward, or horizontal, along the vectors $(1,1)$, $(1,-1)$ and $(1,0)$, respectively,
and the path cannot fall below the horizontal axis.
The path is uniquely determined by the sequence of altitudes $\vv \gamma=(n_0,n_1,\dots,n_L)$   with $n_j-n_{j-1}\in\{-1,0,1\}$, $j=1,\dots,L$.

A random Motzkin path can be generated by assigning a discrete probability measure on the set of all Motzkin paths and choosing $\vv \gamma$ at random according to this probability measure.
We will write $\vv \gamma=(\gamma_0,\dots,\gamma_L)$ and will use the notation $\vv \gamma \topp L$ if we need to explicitly indicate its dependence on the parameter $L$.
In our main results, we are interested in  the assignment of  probability $\Pr_L(\vv \gamma)$ which depends on two boundary parameters $\rho_0,\rho_1\in[0,1)$ and two parameters $q\in[0,1)$ and $\sigma\in[0,1]$ that determine the edge weights. As the probability $\Pr_L(\vv \gamma)$ of selecting a Motzkin path $\vv\gamma$ we take the following expression:
\begin{equation}\label{Pr0}
   \Pr_L(\vv \gamma)=\frac{1}{\mathfrak C_L} \rho_0^{\gamma_0} \rho_1^{\gamma_L}(2\sigma)^{H(\vv \gamma)} \prod_{k=0}^{L}[\gamma_k+1]_q
\end{equation}
where   $[n]_q =1+\dots+q^{n-1}$ denotes the $q$-number, $H(\vv \gamma)=\#\{j\in[1,L]: \gamma_j=\gamma_{j-1}\}$ is the number of horizontal steps, and $\mathfrak C_L$ is the normalizing constant.  This weighting of the Motzkin paths was inspired by a formula
in \cite[Section 2.3]{barraquand2023Motzkin}, where $\sigma=1$.
A more general setup is discussed in Section \ref{sec:WMP}.

The above setup differs from the most commonly studied random Motzkin paths chosen uniformly from all Motzkin paths which start and end at altitude 0. Such random Motzkin paths correspond to  random walks conditioned on staying non-negative and  returning to $0$ at time $L$; it is well known that their asymptotic fluctuations  are described by the Brownian excursion \cite{kaigh1976invariance} and their %
behavior near boundaries
is described by an explicit Markov chain, see \cite{keener1992limit}. It turns out that new phenomena and new asymptotic fluctuations arise when the start and end of Motzkin paths are random as in \cite{Bryc-Wang-2023b,Bryc-Wang-2023}.
Our goal  is to extend \cite{Bryc-Wang-2023b} to allow more general  weights that depend on the parameter $q$. We then analyze the boundary limit Markov chain in two different asymptotic regimes.
Interestingly,  the limits in these regimes  recover  Markov processes that appeared in the description presented in \cite{Bryc-Kuznetsov-2021} of the
non-Gaussian term of the stationary measure of the KPZ equation on the half-line, see
\cite{barraquand2022steady}, \cite{barraquand2022stationary},   as well as the non-Gaussian
term in the conjectural stationary measure of the hypothetical KPZ fixed point on the half-line postulated in  \cite{barraquand2022steady}.

We use the following  standard notation  for the $q$-Pochhammer symbols:
\begin{align*}
   (a;q)_n&=\prod_{k=0}^{n-1} (1-a q^k),&
    (a_1,\dots,a_k;q)_n&=(a_1;q)_{n}(a_2;q)_{n}\dots (a_k;q)_n,\\
       (a;q)_\infty&=\prod_{k=0}^\infty (1-a q^k),&
    (a_1,\dots,a_k;q)_\infty&=(a_1;q)_{\infty}(a_2;q)_\infty\dots (a_k;q)_\infty.
\end{align*}
Our first main result is the following  limit theorem for the boundaries of the random Motzkin path. Let
$\vv\gamma\topp L=\{\gamma\topp L_k\}_{k\geq 0}$ be a sequence of the initial altitudes of a random Motzkin path of length $L$, appended with $0$ for $k>L$,
and let $\wt{\vv\gamma}\topp L=\{\wt {\gamma}\topp L_k\}_{k\in\ZZ_{\geq 0}}$ be a sequence of the final altitudes,  $\wt  \gamma_{k}\topp L =\gamma_{L-k}$, $k=0,1,\dots,L$, appended with $0$ for $k>L$.
\begin{theorem}\label{Thm:1} Suppose that $0\leq \rho_0,\rho_1,q<1$ and $0<\sigma \leq 1$.
   Then
   \[
\pp{\vv\gamma\topp L,\,\wt{\vv\gamma}\topp L}\weakto \pp{\vv X,\,\vv Y},
\]
 as $L\to\infty$, where on the left-hand   side we have random Motzkin paths with respect to measure \eqref{Pr0} and on the right-hand side we have two independent Markov chains $\vv X=\ccbb{X_k}_{k\ge 0}$, $\vv Y=\ccbb{Y_k}_{k\ge 0}$ with the same transition probabilities
 on $\ZZ_{\geq 0}$
given by
\begin{equation*}\label{tr*}
 \Pr(X_{k}=n+\delta|X_{k-1}=n)=\frac{1-q^{n+1}}{1+\sigma}\cdot   \begin{cases}\frac{s_{n+1}}{2s_n}, &\delta=1, \\
 \sigma, & \delta=0, \\
\frac{s_{n-1}}{2s_n}, & \delta=-1,\\
0, & \mbox{\em otherwise},
\end{cases}
\end{equation*}
 for $n=0,1,\ldots$  and with the initial laws
\begin{equation}
  \label{ini-0-laws}
  \Pr(X_0=n)=\frac{1}{C_{0}} \rho_0^n s_n,\;
\Pr(Y_0=n)=\frac{1}{C_{1}} \rho_1^n s_n,\quad  n=0,1,\ldots,
\end{equation}
where  $s_{-1}=0$ and
$$s_n =\sum_{k=0}^n \frac{(a;q)_k  }{
    (q;q)_k } \;\frac{ (b;q)_{n-k}}{
      (q;q)_{n-k} },\quad n=0,1,\ldots,
$$
with $a=-q(\sigma + \i\sqrt{1-\sigma^2})$, $b=-q(\sigma - \i\sqrt{1-\sigma^2})$;
 the normalizing constants are
 $$C_j=\frac{(a\rho_j,b\rho_j;q)_\infty}{(\rho_j;q)_\infty^2}, \quad j=0,1.$$
\end{theorem}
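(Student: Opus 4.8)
The plan is to reduce the statement to the spectral analysis of a single transfer matrix and to identify its relevant eigendata with the Al-Salam--Chihara polynomials at the right edge of their orthogonality interval. Writing $[n+1]_q$ for the vertex weight and assigning edge weight $1$ to up/down steps and $2\sigma$ to horizontal steps, I would first record the transfer-matrix identity
\begin{equation*}
\mathfrak C_L=\langle\ell,M^L r\rangle,\qquad M_{m,m+1}=[m+2]_q,\ \ M_{m,m}=2\sigma[m+1]_q,\ \ M_{m,m-1}=[m]_q,
\end{equation*}
with boundary covectors $\ell_n=\rho_0^n[n+1]_q$ and $r_n=\rho_1^n$, together with the companion formula
\begin{equation*}
\Pr_L(\gamma_0=n_0,\dots,\gamma_j=n_j)=\frac{\rho_0^{n_0}[n_0+1]_q\,\prod_{k=1}^j M_{n_{k-1},n_k}\,(M^{L-j}r)_{n_j}}{\langle\ell,M^L r\rangle}
\end{equation*}
valid for any fixed depth $j$. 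Conjugating $M$ by $\mathrm{diag}([n+1]_q^{1/2})$ turns it into a symmetric Jacobi matrix $T$, and a direct comparison of recurrence coefficients shows $T=\tfrac1{1-q}\bigl(2J+2\sigma I\bigr)$, where $J$ is the Jacobi matrix of the orthonormal Al-Salam--Chihara polynomials with $a+b=-2q\sigma$, $ab=q^2$ (so $a,b$ are exactly as in the statement). Since $|a|=|b|=q<1$ the orthogonality measure is purely absolutely continuous on $[-1,1]$, and under $x\mapsto\frac{2}{1-q}(x+\sigma)$ this interval is carried onto the spectrum of $T$, whose top endpoint is exactly $\lambda=\frac{2(1+\sigma)}{1-q}$, corresponding to the right edge $x=1$.

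Next I would produce the edge eigendata explicitly. I claim $h_n:=s_n/[n+1]_q$ is a right eigenvector of $M$ and $g_n:=s_n$ a left eigenvector, both with eigenvalue $\lambda$; equivalently $s_n$ solves the three-term recurrence $(1-q^{n+1})(s_{n+1}+s_{n-1})=2(1+\sigma q^{n+1})s_n$. This recurrence I would verify by identifying $s_n=Q_n(1)/(q;q)_n$, where $Q_n$ is the monic Al-Salam--Chihara polynomial, so that it is just the ASC three-term relation evaluated at the edge $x=1$; the $q$-binomial theorem simultaneously yields $\sum_n\rho_j^n s_n=\frac{(a\rho_j,b\rho_j;q)_\infty}{(\rho_j;q)_\infty^2}=C_j$. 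The same recurrence shows that the Doob $h$-transform $p(m,n):=M_{m,n}h_n/(\lambda h_m)$ is a stochastic kernel equal to the transition probability in the statement, and in particular that the stated transition probabilities sum to one.

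The analytic core is the large-power asymptotics of $M$. Because $\phi_n=s_n/[n+1]_q^{1/2}\sim c\,n$ fails to be square-summable, $\lambda$ is not an isolated eigenvalue but the upper edge of the absolutely continuous spectrum, so there is no spectral gap and the ordinary Perron--Frobenius argument is unavailable. Instead I would use the spectral representation $(T^N)_{m,n}=\int x^N P_m(x)P_n(x)\,d\nu(x)$ with $P_k$ the orthonormal ASC polynomials and $\nu$ their measure, and extract the $N\to\infty$ behavior by a Darboux/Tauberian analysis at the right endpoint, where the ASC density vanishes like $(1-x)^{1/2}$. This should yield $(M^N)_{m,n}=\lambda^N c_N\,h_m g_n\,(1+o(1))$ with a prefactor $c_N$ (of order $N^{-3/2}$) that is \emph{independent of $m,n$}. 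This uniform edge estimate is precisely where the behavior of the Al-Salam--Chihara polynomials near the right end of the orthogonality interval enters, and it is the step I expect to be the main obstacle.

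Finally I would substitute the edge asymptotics into the transfer-matrix formula. The factors $\lambda^{N}c_N$ cancel between numerator and denominator; using $M^j h=\lambda^j h$ and $\langle\ell,h\rangle=\sum_n\rho_0^n s_n=C_0$, the depth-$j$ marginal converges to $\frac{\rho_0^{n_0}s_{n_0}}{C_0}\prod_{k=1}^j p(n_{k-1},n_k)$, i.e. to the law of $\vv X$. Reversing the path and exchanging $\rho_0\leftrightarrow\rho_1$ gives the analogous statement for the final segment and the chain $\vv Y$. For joint convergence and independence I would consider an initial block $(\gamma_0,\dots,\gamma_j)$ together with a final block $(\gamma_L,\dots,\gamma_{L-j'})$; the intervening factor $(M^{L-j-j'})_{n_j,m_{j'}}$ factorizes as $h_{n_j}g_{m_{j'}}$ up to the common prefactor, so the joint limit splits into the product of the two marginal laws. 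Finite-dimensional convergence together with the evident consistency then upgrades to the asserted weak convergence $\pp{\vv\gamma\topp L,\wt{\vv\gamma}\topp L}\weakto(\vv X,\vv Y)$ on the sequence space.
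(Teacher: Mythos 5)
Your algebraic skeleton is correct and matches the paper's: the transfer matrix $M=\mM_1$, the identification of the symmetrized matrix with the Al-Salam--Chihara Jacobi matrix under $x\mapsto \frac{2}{1-q}(x+\sigma)$, the formula $s_n=Q_n(1;a,b|q)/(q;q)_n$, the eigenvector/Doob-transform description of the transition kernel, and the evaluation of $C_j$ from the generating function all check out (your recurrence $(1-q^{n+1})(s_{n+1}+s_{n-1})=2(1+\sigma q^{n+1})s_n$ is exactly the ASC relation at $x=1$ after the affine change of variable). The problem is that the entire analytic content of the theorem is concentrated in the step you explicitly defer: the claim $(M^N)_{m,n}=\lambda^N c_N h_m g_n(1+o(1))$ with a prefactor independent of $m,n$, plus the unstated interchange of limit and infinite sum needed to push this asymptotic through $(M^{L-j}r)_{n_j}=\sum_{n'}(M^{L-j})_{n_j,n'}\rho_1^{n'}$ and through $\langle\ell,M^Lr\rangle$ (the boundary vectors are not finitely supported, so pointwise edge asymptotics alone do not suffice; you need a dominating bound such as $\sup_{x\in[-1,1]}|Q_n(x)|\le Q_n(1)$, which is the paper's Proposition \ref{L4.5+} and is not automatic — see Remark \ref{R:Ismail}). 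As written, "this should yield" and "the step I expect to be the main obstacle" leave the theorem unproved.

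It is worth knowing that the paper avoids this obstacle entirely, and that your route demands strictly more than is needed. Writing $(M^N)_{m,n}=\int x^N p_m(x)\wt p_n(x)\,\nu(dx)$ and summing against the boundary vectors, every quantity in the problem is a ratio of integrals of the form $\int F(x)x^L\nu(dx)\big/\int x^L\nu(dx)$ with $F$ bounded on $\mathrm{supp}(\nu)$ and continuous at the right endpoint $B$; the paper's Lemma \ref{L-lim} shows such a ratio converges to $F(B)$ using only $B\in\mathrm{supp}(\nu)\subseteq[A,B]$ with $B>|A|$ — no Darboux/Tauberian analysis, no $(1-x)^{1/2}$ edge behaviour of the density, and no $N^{-3/2}$ prefactor. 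The unknown constants $c_N$ never need to be identified because they cancel inside each ratio before any asymptotics is taken. The precise endpoint asymptotics of the Al-Salam--Chihara polynomials (Theorems \ref{Thm:3.1} and \ref{Thm:3.2}) are genuinely needed in this paper, but only for the local limit Theorems \ref{Thm:2} and \ref{Thm:3}, not for Theorem \ref{Thm:1}. If you want to complete your argument along your own lines, the honest fix is either to prove the uniform edge estimate together with a domination bound (essentially reproving Proposition \ref{L4.5+} and a Watson-lemma computation), or — much more cheaply — to replace the edge asymptotics by the soft ratio lemma applied to the integral representation, which is what the paper does via Lemmas \ref{L-mat-ans}, \ref{L-IF} and \ref{L-lim} and Theorem \ref{T-L}.
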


We note that if $q=0$ then $s_n=n+1$, recovering the transition probabilities in \cite[Theorem 1.1]{Bryc-Wang-2023b}, who use the parameter $\sigma$ that is twice our $\sigma$.   It is natural to expect that as in \cite[Theorem 1.6]{Bryc-Wang-2023b}, there is a version of this result that also holds for $\rho_1\geq 1$. However, this is beyond the scope of this paper.

Theorem \ref{Thm:1} will be deduced from a more general Theorem \ref{T-L}. The proof appears in Section \ref{sec:SCI} and relies on properties of the Al-Salam--Chihara polynomials, which we discuss in Section \ref{sec:AlSal}.

The next two theorems give macroscopic continuous-time limits of the family of Markov chains $\{X_k\}$.
 In the statements of Theorems \ref{Thm:2} and \ref{Thm:3} below,  $\fddto$ denotes convergence of finite-dimensional distributions.

In our first result, we take %
 $\rho_0\nearrow 1$
at an appropriate rate but keep $q$ fixed.
Then the normalized Markov chain $\ccbb{X_k}$   converges to the Bessel process,  which for $\sigma=1$ appeared in the description of the non-Gaussian %
term in
the conjectural stationary measure for the hypothetical KPZ fixed point on the half-line in \cite[Theorem 2.6 ]{Bryc-Kuznetsov-2021}.
\begin{theorem}\label{Thm:2}
  Fix $0\leq q<1$, $0<\sigma\leq 1$ and $\C>0$. Let $\{X_k\topp N\}_{k\in\ZZ_{\geq 0}}$ be a Markov chain from Theorem \ref{Thm:1} with the initial law that depends on $N$ through $\rho_0=e^{-\C/\sqrt{N}}$.
  Then
  \begin{equation}\label{12conv}
     \frac{1}{\sqrt{N}}\ccbb{X_{\floor{Nt}}\topp N}_{t\geq 0}\fddto \ccbb{\xi_t\topp \C}_{t\geq 0}\; \mbox{ as }   N\to\infty,
  \end{equation}
   where   $\{\xi_t\topp \C\}_{t\ge 0}$  is the 3-dimensional Bessel process with transition probabilities
   \begin{equation}\label{xi-tr}
       \Pr(\xi_t\topp \C=dy|\xi_0\topp \C=x)= \frac{y}{x}\,\g_{\frac{t}{1+\sigma}}(x,y)\, dy, %
   \end{equation}
   where $\g_t$, $t>0$, is the transition kernel of the Brownian motion killed at hitting zero, i.e.
   \begin{equation}\label{BMhit0}
   \g_t(x,y)=\frac{1}{\sqrt{2\pi t}}\left(\exp(-\tfrac{(y-x)^2}{2t})-\exp(-\tfrac{(y+x)^2}{2t})\right),\quad
   x,y>0,
   \end{equation}
and  with the  initial distribution
    \begin{equation}
      \label{xi0}
      \Pr( \xi_0\topp \C=d x)=\C^2 x e^{-\C x} {\bf 1}_{\{x>0\}} d x.
    \end{equation}
\end{theorem}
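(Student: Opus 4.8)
The plan is to expose a Doob $h$-transform structure hidden in the transition kernel of Theorem \ref{Thm:1}, which realizes $\{X^{(N)}_k\}$ as a \emph{discrete Bessel process} and makes the prefactor $y/x$ in \eqref{xi-tr} transparent; all the boundary analysis then collapses onto a single local limit theorem for a killed random walk. The one analytic input needed about the weights is the large-$n$ behaviour of $s_n$. Since $s_n$ is the convolution of the coefficients of $(a;q)_k/(q;q)_k$ and $(b;q)_k/(q;q)_k$, the $q$-binomial (Cauchy) theorem gives
\[
\sum_{n\ge0}s_n x^n=\frac{(ax,bx;q)_\infty}{(x;q)_\infty^2}=\frac{g(x)}{(1-x)^2},\qquad g(x):=\frac{(ax,bx;q)_\infty}{(qx;q)_\infty^2}.
\]
The factor $g$ is analytic on $|x|<q^{-1}$, so on $|x|\le1$ the only singularity is the double pole at $x=1$; singularity analysis yields $s_n=B(n+1)-g'(1)+O(nq^n)$ with $B:=g(1)=\tfrac{(a,b;q)_\infty}{(q;q)_\infty^2}>0$. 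In particular $s_n\sim Bn$ and $s_{n\pm1}/s_n=1\pm\tfrac1n+O(n^{-2})$, which is all that the rest of the argument uses.

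Next I would factor the kernel as $P(n,m)=\tfrac{s_m}{s_n}\,w(n,m)$, reading off
\[
w(n,n\pm1)=\frac{1-q^{n+1}}{2(1+\sigma)},\qquad w(n,n)=\frac{(1-q^{n+1})\,\sigma}{1+\sigma},
\]
so $w$ is the sub-stochastic kernel of a lazy nearest-neighbour walk on $\ZZ_{\ge0}$ that is killed with probability $q^{n+1}$ at site $n$ and cannot cross below $0$ (as $s_{-1}=0$). Since the probabilities in Theorem \ref{Thm:1} sum to one, $\sum_m w(n,m)s_m=s_n$; that is, $s$ is $w$-harmonic and $\{X_k\}$ is the Doob $h$-transform of the killed walk $w$ with $h=s$. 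Hence the $m$-step kernel factors as $P^m(n,m')=\tfrac{s_{m'}}{s_n}\,W^m(n,m')$, where $W^m$ is the $m$-step kernel of the killed walk. The crucial structural point is that the singular Bessel drift $\tfrac1{(1+\sigma)x}$ of the limit is produced entirely by the ratio $s_{m'}/s_n$, while the base walk itself has zero drift.

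I would then take the diffusive limit of the base walk. It has per-step variance $\tfrac1{1+\sigma}+o(1)$, no drift, and a killing profile that is exponentially small away from the origin and only $O(1)$ at heights $O(1)$; under $n=\floor{\sqrt N x}$, $m=\floor{Nt}$ it should converge to Brownian motion absorbed at $0$, run at speed $\tfrac1{1+\sigma}$. Concretely I would prove the local limit theorem
\[
\sqrt N\,W^{\floor{Nt}}\!\big(\floor{\sqrt N x},\floor{\sqrt N y}\big)\ \longrightarrow\ \g_{t/(1+\sigma)}(x,y),
\]
with $\g$ the killed-Brownian kernel \eqref{BMhit0}. Multiplying by $s_{\floor{\sqrt N y}}/s_{\floor{\sqrt N x}}\to y/x$ turns the rescaled $m$-step kernel of $\{X_k\}$ into $\tfrac{y}{x}\g_{t/(1+\sigma)}(x,y)$, exactly the $3$-dimensional Bessel density \eqref{xi-tr}; conditioning the killed walk to survive automatically keeps the limit away from $0$, so no separate boundary control of the limit is required.

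Finally, for the starting law I would use $\rho_0=e^{-\C/\sqrt N}$, so $1-\rho_0\sim\C/\sqrt N$ and $(\rho_0;q)_\infty\sim(\C/\sqrt N)(q;q)_\infty$, giving $C_0\sim NB/\C^2$; combined with $s_n\sim Bn$ this yields $\sqrt N\,\Pr(X_0=\floor{\sqrt N x})\to\C^2 x e^{-\C x}$, matching \eqref{xi0}. The finite-dimensional convergence \eqref{12conv} then follows from the Markov property together with the kernel and initial-law limits, once pointwise convergence of densities is upgraded to weak convergence by a uniform-integrability (domination) estimate. I expect the main obstacle to be the local limit theorem above: establishing it for the laziness- and killing-inhomogeneous killed walk with enough uniformity near the boundary $x=0$ to justify interchanging the limit with the product $\tfrac{s_{m'}}{s_n}W^m$. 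The reward of the $h$-transform viewpoint is that it confines every boundary difficulty to this one classical-type estimate and never asks one to handle the singular drift of the limiting Bessel generator directly.
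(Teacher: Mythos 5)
Your Doob $h$-transform reduction is correct and is a genuinely different framing from the paper's. The generating function $\sum_{n\ge0}s_nx^n=(ax,bx;q)_\infty/(x;q)_\infty^2$ and the resulting asymptotics $s_n\sim Bn$ with $B=(a,b;q)_\infty/(q;q)_\infty^2$ agree with the paper's \eqref{1*} (obtained there by a residue computation for $Q_n(1;a,b|q)$, since $s_n=Q_n(1;a,b|q)/(q;q)_n$); the factorization $P(n,m)=\frac{s_m}{s_n}w(n,m)$ with $s$ harmonic for the sub-stochastic kernel $w$ follows from the recurrence \eqref{Al-Salam} at $x=1$ together with \eqref{ab=q2}; and your computation of the initial law matches the paper's \eqref{NPr}. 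In fact your factorization is implicitly the paper's own formula \eqref{IR-2}, which writes the $k$-step kernel as a harmonic-function ratio times a spectral integral against the orthogonality measure $\nu$ --- that integral being (up to bounded factors converging to $1$) the $k$-step kernel of your killed base walk. What your viewpoint buys is conceptual clarity: the singular Bessel drift comes entirely from the ratio $s_{m'}/s_n\to y/x$, and the normalization of the limit kernel is automatic.

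The genuine gap is the local limit theorem $\sqrt N\,W^{\floor{Nt}}(\floor{\sqrt Nx},\floor{\sqrt Ny})\to\g_{t/(1+\sigma)}(x,y)$, which you announce as something you ``would prove'' and correctly flag as the main obstacle: this single estimate is where essentially all of the paper's technical work resides (Section \ref{sec:AlSal} and the proof of Theorem \ref{Thm-loc-lim}). Two points are nontrivial and neither is addressed. First, the soft, spatially inhomogeneous killing at rate $q^{n+1}$ must be shown to produce, in the diffusive limit, exactly hard absorption at $0$, i.e.\ the image term $-e^{-(x+y)^2(1+\sigma)/(2t)}$ in \eqref{BMhit0}; a purely probabilistic argument needs sharp control of the local time near the origin (paths reaching height $O(1)$ accumulate order $\sqrt N$ visits there and survive with probability $o(1)$), and this must be done at the level of a \emph{local} limit, uniformly in the starting and ending points, not merely weakly. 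Second, one needs the domination allowing the interchange of limits when multiplying by $s_{m'}/s_n$ and applying Scheff\'e/\cite[Theorem 3.3]{billingsley99convergence}. The paper obtains both from exact formulas: the integral representation \eqref{IR-2}, the edge asymptotics $M^{-1}Q_M(1-u_M^2/(2M^2);a,b|q)\to\frac{\sin u}{u}\frac{(a,b;q)_\infty}{(q;q)_\infty}$ of Theorem \ref{Thm:3.1}, and the uniform bound $|Q_n(x;a,b|q)|\le Q_n(1;a,b|q)$ of Proposition \ref{L4.5+}, which together turn the spectral integral into the sine-transform representation of $\g_t$. Your reduction is fully compatible with that machinery, and the most economical way to close the gap is to prove your local CLT for $W$ by exactly this spectral route rather than by random-walk estimates.
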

In our second result, we take both %
$\rho_0\nearrow 1$ and $q\nearrow 1$  at  appropriate rates.
Then, under appropriate centering and normalization, the Markov chain  $\ccbb{X_k}$ converges to the Markov process on $\RR$,   which for $\sigma=1$ appeared in the description of the non-Gaussian term in the representation of the stationary measure for the KPZ equation on the half-line given in \cite[Theorem 2.3]{Bryc-Kuznetsov-2021}. (This is only a one-parameter subset of the two-parameter family of such measures conjectured in \cite{barraquand2022steady} and proved rigorously in \cite{barraquand2022stationary}.) In the statement,
$K_\nu(z)$ is a modified Bessel $K$ function (Macdonald function) of imaginary index $\nu$ and positive argument $z$, see e.g. %
\cite[\href{https://dlmf.nist.gov/10.32.E9}{10.32.E9}]{NIST:DLMF}.
\begin{theorem}\label{Thm:3}
  Fix  $0<\sigma\leq 1$  and $\C>0$. Let $\{X_k\topp N\}_{k\in\ZZ_{\geq 0}}$ be a Markov process from Theorem \ref{Thm:1} with parameters $q=e^{-2/\sqrt{N}}$ and $\rho_0=e^{-\C/\sqrt{N}}$. Then
  \begin{equation}\label{13conc}
     \frac{1}{\sqrt{N}}\ccbb{X_{\floor{Nt}}\topp N-\sqrt{N}\log \sqrt{2 N(1+\sigma)}}_{t\geq 0}\fddto \ccbb{\zeta_t\topp \C}_{t\geq 0}\; \mbox{ as }  N\to\infty,
  \end{equation}
   where   $\zeta\topp \C$  is  a Markov process with transition probabilities
   \begin{equation}\label{zeta-tr}
       \Pr(\zeta_t^{\topp{\C}}=dy|\zeta_0^{\topp \C}=x)=    \frac{K_0(e^{-y})}{K_0(e^{-x})}\; \p_{\frac{t}{1+\sigma}}(x,y)\,d y,\quad
       x,y\in\RR,
   \end{equation}
   where  $\p_t$, $t>0$, is the Yakubovich transition kernel,  \cite{Yakubovich:2011}, defined by  \begin{equation}\label{Yak}\p_t(x,y)=\frac{2}{\pi}\int_0^\infty e^{-tu^2/2} K_{\i u}(e^{-x})K_{\i u}(e^{-y}) \frac{ du}{  |\Gamma(\i u)|^2},\end{equation}
and with the initial distribution
    \begin{equation}
      \label{zeta0}
      \Pr( \zeta_0\topp \C=d x)= \frac{4}{2^\C\Gamma(\C/2)^2} e^{-\C x} K_0(e^{-x})\,d x.
    \end{equation}
\end{theorem}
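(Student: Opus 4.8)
The plan is to use the Markov property of both sides to reduce \eqref{13conc} to two pieces: convergence of the rescaled initial law to \eqref{zeta0}, and locally uniform convergence of the $\floor{Nt}$-step transition density of the rescaled chain to the kernel in \eqref{zeta-tr}. Writing $b_N=\sqrt N\log\sqrt{2N(1+\sigma)}$ for the centering and associating to a site $n$ the macroscopic coordinate $x=(n-b_N)/\sqrt N$, with Jacobian $dn\mapsto \sqrt N\,dx$, the first step is the structural observation that the chain of Theorem \ref{Thm:1} is a Doob $h$-transform. Indeed, the harmonicity identity $\tfrac12 s_{n+1}+\sigma s_n+\tfrac12 s_{n-1}=\tfrac{1+\sigma}{1-q^{n+1}}\,s_n$ --- equivalent to the transition probabilities summing to one --- shows that $p(n,n')=\tfrac{s_{n'}}{s_n}\,r(n,n')$ for $n'\in\{n-1,n,n+1\}$, where $r$ is the sub-Markov birth--death kernel with symmetric up/down weights $\tfrac12\tfrac{1-q^{n+1}}{1+\sigma}$, holding weight $\sigma\tfrac{1-q^{n+1}}{1+\sigma}$, and killing probability $q^{n+1}$ at site $n$. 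Hence $p^{(m)}(n_0,n)=\tfrac{s_n}{s_{n_0}}\,r^{(m)}(n_0,n)$, and $s$ is the (non-$\ell^2$) eigenfunction of $r$ at the upper edge $\lambda=1$ of its spectrum.

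Under the double scaling $q=e^{-2/\sqrt N}$, $n=b_N+\sqrt N x+o(\sqrt N)$ the killing probability satisfies $q^{n+1}\sim \tfrac{1}{2N(1+\sigma)}e^{-2x}$, so that over $\floor{Nt}$ steps the accumulated killing tends to $\tfrac{t}{1+\sigma}\cdot\tfrac12 e^{-2x}$ while the symmetric $\pm1$ steps produce diffusive fluctuations of variance $\tfrac{t}{1+\sigma}$; this already identifies the limiting base semigroup as $e^{\frac{t}{1+\sigma}L}$ with $L=\tfrac12\partial_x^2-\tfrac12e^{-2x}$, whose kernel is exactly the Yakubovich kernel $\p_{t/(1+\sigma)}$, since $K_{\i u}(e^{-x})$ are the generalized eigenfunctions of $L$ with eigenvalue $-u^2/2$ and \eqref{Yak} is the corresponding Kontorovich--Lebedev spectral expansion. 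To make this rigorous I would use the spectral representation of the self-adjoint base kernel in terms of the Al-Salam--Chihara polynomials $\{\psi_n\}$ and their orthogonality measure $\mu$, namely $r^{(m)}(n_0,n)=\pi_n\int \lambda^m\psi_{n_0}(\lambda)\psi_n(\lambda)\,d\mu(\lambda)$ with $\pi_n\propto(1-q^{n+1})^{-1}$ the reversing measure, and zoom in at the edge through $\lambda=1-\tfrac{u^2}{2N(1+\sigma)}$, for which $\lambda^{\floor{Nt}}\to e^{-tu^2/(2(1+\sigma))}$.

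The analytic core consists of two asymptotic inputs, both supplied by the behaviour of the Al-Salam--Chihara polynomials near the right end of their orthogonality interval together with the $q\nearrow1$ asymptotics of the $q$-Pochhammer and $q$-Gamma functions developed in Section \ref{sec:AlSal}. First, evaluating the edge solution $s_n$ at $n=b_N+\sqrt N x$ gives $s_n\sim A_N\,K_0(e^{-x})$ for a prefactor $A_N$ independent of $x$, so that $\tfrac{s_n}{s_{n_0}}\to \tfrac{K_0(e^{-y})}{K_0(e^{-x})}$; here $K_0$ arises as the $u=0$ limit $K_{\i 0}$, and the emergence of the Macdonald function is precisely the $q\to1$ limit of the $q$-hypergeometric sum defining $s_n$. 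Second, the edge-scaled orthonormal polynomials satisfy $\psi_n(1-\tfrac{u^2}{2N(1+\sigma)})\to \text{const}\cdot K_{\i u}(e^{-y})$ and the spectral measure, pushed to the variable $u$, converges to the Kontorovich--Lebedev measure $\tfrac{2}{\pi}\tfrac{du}{|\Gamma(\i u)|^2}$. Feeding these into the spectral integral and combining with the weight $\pi_n$ and the Jacobian $\sqrt N$ then yields $p^{(\floor{Nt})}(n_0,n)\,\sqrt N\to \tfrac{K_0(e^{-y})}{K_0(e^{-x})}\,\p_{t/(1+\sigma)}(x,y)$, which is \eqref{zeta-tr}.

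It remains to treat the initial law, where $\Pr(X_0\topp N=n)=\tfrac{1}{C_0}\rho_0^n s_n$ with $\rho_0=e^{-\C/\sqrt N}$: inserting $\rho_0^n=e^{-\C(b_N/\sqrt N+x)}$, the asymptotics $s_n\sim A_N K_0(e^{-x})$, the $q\nearrow1$ asymptotics of $C_0=\tfrac{(a\rho_0,b\rho_0;q)_\infty}{(\rho_0;q)_\infty^2}$, and the Jacobian $\sqrt N$, should reproduce the density $\tfrac{4}{2^\C\Gamma(\C/2)^2}e^{-\C x}K_0(e^{-x})$ of \eqref{zeta0} once all $N$-dependent prefactors cancel against $C_0$. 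The main obstacle is not the pointwise edge asymptotics but the uniformity required to pass the limit inside the spectral integral under the simultaneous scaling $q\nearrow1$, $n\to\infty$: one must control the Al-Salam--Chihara polynomials and the spectral density uniformly both for large $u$ and for $\lambda$ bounded away from the edge, so as to dominate the integrand and show that the bulk of the spectrum contributes negligibly. Establishing such uniform bounds (and uniform remainder control in the $q$-Pochhammer and $q$-Gamma expansions) is where I expect the real work to lie; once it is in place, dominated convergence delivers the transition-kernel limit, and the Markov reduction of the first paragraph upgrades the initial-law and one-step limits to the asserted convergence of finite-dimensional distributions.
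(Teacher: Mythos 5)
Your plan is correct and follows essentially the same route as the paper: reduce \eqref{13conc} via the Markov property to a local limit for the initial law and for the $\floor{Nt}$-step transition probabilities, represent the latter through the orthogonality measure of the Al-Salam--Chihara polynomials (your $h$-transform identity $p^{(m)}(n_0,n)=\tfrac{s_n}{s_{n_0}}r^{(m)}(n_0,n)$ is exactly the paper's formula \eqref{IR-2}), zoom in at the right spectral edge with $\lambda=1-\tfrac{u^2}{2N(1+\sigma)}$, and invoke the $K_{\i u}(e^{-x})$ edge asymptotics together with $\Gamma_q\to\Gamma$ and dominated convergence. You have also correctly located where the real work lies, namely the uniform bounds (the paper's Proposition \ref{L4.5+} and the theta-function estimates on $\Gamma_q$ in Section \ref{Sec:SF}) needed to dominate the spectral integral under the simultaneous scaling $q\nearrow 1$, $n\to\infty$.
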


Theorems \ref{Thm:2} and \ref{Thm:3} will be derived from local limits in Theorem \ref{Thm-loc-lim} and Theorem \ref{Thm-q=1}.

The paper is organized as follows. In Section \ref{sec:WMP} we introduce random Motzkin paths with general weights, we prove the matrix representation, the integral representation, and the general boundary limit theorem.
In Section \ref{sec:AlSal} we recall some properties of the
Al--Salam--Chihara polynomials, and we prove  asymptotic results for their behavior near the right end of the interval of orthogonality; Theorem \ref{Thm:3.1} could be derived from known results but we believe Theorem \ref{Thm:3.2} is  new.  In Section \ref{sec:SCI} we  use these results to prove  Theorem \ref{Thm:1} and two local limit theorems, Theorem \ref{Thm-loc-lim} and Theorem \ref{Thm-q=1},   which we use to complete proofs of Theorems \ref{Thm:2} and \ref{Thm:3}.
 In Section \ref{Sec:SF} we discuss properties of the $q$-Pochhammer and $q$-Gamma functions  and derive the limits and bounds needed in our proofs.
\section{A general  limit theorem for random Motzkin paths at the boundary}\label{sec:WMP}
In this section we introduce a %
general setting and prove a version of Theorem \ref{Thm:1} for %
general random Motzkin paths.

Recall that a Motzkin path of length $L=1,2,\dots$  is a sequence
of lattice points $(\vv x_0,\dots, \vv x_L)$  such that $\vv x_j = (j, n_j)\in\ZZ_{\geq 0}\times  \ZZ_{\geq 0},\;j=0,1,\dots,L$. An edge $(\vv x_{j-1},\vv x_j)$  is
called a ($j$-th) step, and only three types of steps are allowed: upward steps, downward steps, and horizontal steps.
The edge $(\vv x_{j-1},\vv x_j)$ is an upward step if
$n_{j}-n_{j-1}=1$, a downward step if $n_{j}-n_{j-1}=-1$, and a horizontal step if $n_{j}-n_{j-1}=0$, see, e.g., \citet[Definition V.4, p. 319]{flajolet09analytic} or \cite{Viennot-1984a}.
Each such path can be identified with a  sequence of non-negative integers that specify the starting point $(0,n_0)$ and consecutive   values $n_j$  along the vertical axis at step $j\geq 1$.
We shall write  $\vv\gamma=(\gamma_0,\gamma_1,\dots,\gamma_L)$ with $\gamma_j = n_j$  for such a sequence and refer to $\vv\gamma$ as a Motzkin path.   By $\calM_{m,n}\topp L$ we denote the family of all Motzkin paths $\vv \gamma$  of length $L$ with the initial altitude $\gamma_0=m$ and the final altitude $\gamma_L=n$. We also refer to $\gamma_0$ and $\gamma_L$ as the boundary/endpoints of the path.

We are interested in the probabilistic properties of random Motzkin paths. To introduce a probability measure on the set of the Motzkin paths, we assign weights to the edges and to the endpoints of a Motzkin path. The weights for the edges arise multiplicatively from three sequences $\vv a=(a_j)_{j\geq 0},\vv b=(b_j)_{j\geq 0},\vv c=(c_j)_{j\geq 0}$ of positive real numbers.
For a path $\vv\gamma=(\gamma_0=m,\gamma_1,\dots,\gamma_{L-1},\gamma_L=n)\in\calM_{m,n}\topp L$ we define its (edge) weight
\begin{equation}\label{w}
  w(\vv\gamma)=\prod_{k=1}^L a_{\gamma_{k-1}}^{\eps_k^+} b_{\gamma_{k-1}}^{\eps_k^0}c_{\gamma_{k-1}}^{\eps_k^-},
\end{equation}
where
\begin{equation*}\label{epsilons}
   \eps_k^+(\vv\gamma):=\ind_{\gamma_{k}>\gamma_{k-1}},\; \eps_k^-(\vv\gamma):=\ind_{\gamma_{k}<\gamma_{k-1}}, \; \eps_k^0(\vv\gamma):=\ind_{\gamma_{k}=\gamma_{k-1}},\; k=1,\dots, L.
\end{equation*}
That is, the edge weight is multiplicative in the edges, we take $\vv a$, $\vv b$ and $\vv c$ as the weights of the upward steps, horizontal steps and downward steps, and the weight of a step depends  on the altitude of the left-end of an edge.
Note that the value of $c_0$ does not contribute to $w(\vv\gamma)$ and it is convenient to allow $c_0=0$. Since $\calM_{m,n}\topp L$ is a finite set, the normalization constants
\begin{equation*}\label{Wij}
  \mathfrak W
  \topp L_{m,n}=\sum_{\vv\gamma\in\calM_{m,n}\topp L} w(\vv\gamma)
\end{equation*}
are well defined for all $m,n\geq 0$.

  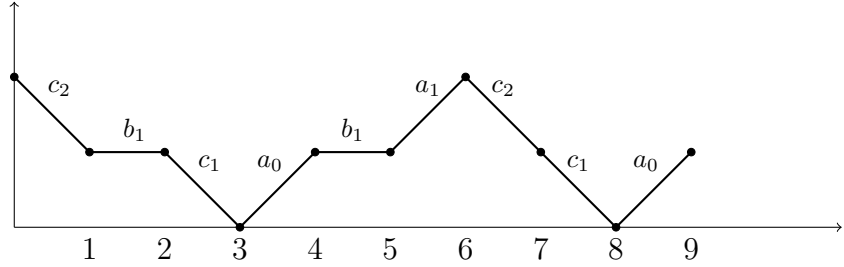
\begin{figure}[H]
  \begin{tikzpicture}[scale=1]

 \draw[->] (0,0) to (0,3);

 \draw[->] (0,0) to (11,0);
\draw[-,thick] (0,2) to (1,1);
\draw [fill] (0,2) circle [radius=0.05];
 \node[above] at (.6,1.6) {\footnotesize  $c_2$};
\draw[-,thick] (1,1) to (2,1);
\draw [fill] (1,1) circle [radius=0.05];
 \node[above] at (1.6,1) {\footnotesize  $b_1$};
\draw[-,thick] (2,1) to (3,0);
\draw [fill] (2,1) circle [radius=0.05];
 \node[above] at (2.6,0.6) {\footnotesize  $c_1$};
\draw[-,thick] (3,0) to (4,1);
\draw [fill] (3,0) circle [radius=0.05];
 \node[above] at (3.4,0.6) {\footnotesize  $a_0$};
\draw[-,thick] (4,1) to (5,1);
\draw [fill] (4,1) circle [radius=0.05];
 \node[above] at (4.5,1) {\footnotesize  $b_1$};
\draw[-,thick] (5,1) to (6,2);
\draw [fill] (5,1) circle [radius=0.05];
 \node[above] at (5.5,1.6) {\footnotesize  $a_1$};
\draw[-,thick] (6,2) to (7,1);
\draw [fill] (6,2) circle [radius=0.05];
 \node[above] at (6.5,1.6) {\footnotesize  $c_2$};
\draw[-,thick] (7,1) to (8,0);
\draw [fill] (7,1) circle [radius=0.05];
 \node[above] at (7.5,0.6) {\footnotesize  $c_1$};
\draw[-,thick] (8,0) to (9,1);
\draw [fill] (8,0) circle [radius=0.05];
 \node[above] at (8.4,0.6) {\footnotesize  $a_0$};
 \draw [fill] (9,1) circle [radius=0.05];

   \node[below] at (1,0) {  $1$};
      \node[below] at (2,0) {  $2$};
       \node[below] at (3,0) {  $3$};
        \node[below] at (4,0) {  $4$};
         \node[below] at (5,0) {  $5$};
          \node[below] at (6,0) {  $6$};
           \node[below] at (7,0) {  $7$};
            \node[below] at (8,0) {  $8$};
             \node[below] at (9,0) {  $9$};

\end{tikzpicture}

\caption{\label{Fig1}    Motzkin path $\vv\gamma=(2,1,1,0,1,1,2,1,0,1)\in\calM\topp 9$  with weight contributions marked at the edges. The probability of selecting the path shown above from $\calM\topp 9$ is
$\Pr_9(\vv\gamma)=\frac{\alpha_2 \beta_1}{\mathfrak C_{\vv \alpha,\vv \beta, 9} }  b_1^2 a_0^2 a_1 c_1^2  c_2^2$.
}
\end{figure}

In addition to the weights of the edges, we also assign weights to the two boundary points.
To this end we choose two additional non-negative sequences $\vv \alpha=(\alpha_m)_{m\ge 0}$ and $\vv \beta=(\beta_n)_{n\ge 0}$ such that
\begin{equation}
   \label{CL} \mathfrak{C}_{\vv\alpha,\vv\beta,L}:=\sum_{m,n\geq 0} \alpha_m \mathfrak W\topp L_{m,n}\beta_n<\infty.
\end{equation}

With finite normalizing constant \eqref{CL}, the countable set  $\calM\topp L=\bigcup_{m,n\geq 0} \calM_{m,n}\topp L$ becomes a probability space with the discrete probability measure
\begin{equation}\label{Pr}
\Pr_L(\vv\gamma)\equiv  \Pr_{\vvalpha,\vvbeta,L}(\{\vv\gamma\})=\frac{\alpha_{\gamma_0}\beta_{\gamma_L}}{\mathfrak{C}_{\vvalpha,\vvbeta,L}}\, w(\vv\gamma), \quad \vv\gamma\in\calM\topp L.
\end{equation}
For an illustration, see Figure \ref{Fig1}.
We now consider a random vector $(\gamma_0\topp L,\dots,\gamma_L\topp L)\in\calM\topp L$ sampled from $\Pr_L$, and extend it to an infinite random sequence  denoted by
\begin{equation}\label{gam}
\vv\gamma\topp L:= \ccbb{\gamma_k\topp L}_{k\ge 0}, \qmwith \gamma_k\topp L := 0 \mbox{ if } k>L.
\end{equation}
Similarly, we also introduce a %
reversed random sequence %
\begin{equation}\label{wtgam}
\wt{\vv\gamma}\topp L:=\ccbb{\wt\gamma_k\topp L}_{k\ge 0} \qmwith \wt\gamma_k\topp L :=\begin{cases}
\gamma_{L-k}\topp L, & \mbox{ if } k=0,\dots,L,\\
0, & \mbox{ if } k>L.
\end{cases}
\end{equation}
  We are interested in %
 weak limits of  laws of
  $\vv\gamma\topp L$ and $\wt{\vv\gamma}\topp L$ as $L\to\infty$.
In these limits,   we do not scale the sequences nor the locations.

Recall that weak convergence of discrete-time processes means the convergence of finite-dimensional distributions \citep{billingsley99convergence}. For integer-valued random variables, the latter
follows from the convergence of probability generating functions. We will therefore   fix $K$, $z_0,z_1\in(0,1]$ and $t_1,\dots,t_K, s_1,\dots,s_K>0$ and the goal is to determine two discrete-time processes $\vv X=\{X_k\}_{k\ge 0}$ and $\vv Y=\{Y_k\}_{k\ge 0}$ such that
\begin{multline}\label{FixK}
\lim_{L\to\infty}  \EE \bb{z_0^{\gamma_0\topp L} \pp{\prod_{j=1}^K t_j^{\gamma_j\topp L-\gamma_{j-1}\topp L}}\pp{ \prod_{j=1}^K s_j^{\gamma_{L-j}\topp L-\gamma_{L+1-j}\topp L}}z_1^{\gamma_L\topp L}} \\
=\EE\bb{ z_0^{X_0} \prod_{j=1}^K t_j^{X_j-X_{j-1}}}\EE\bb{ z_1^{Y_0} \prod_{j=1}^K s_j^{Y_j-Y_{j-1}}}.
\end{multline}
Indeed, the above expressions uniquely determine the corresponding probability generating functions for small enough arguments. For example,
$$\EE\bb{ \prod_{j=0}^K v_j^{X_j}}=\EE\bb{ z_0^{X_0} \prod_{j=1}^K t_j^{X_j-X_{j-1}}}$$
with
$z_0=v_0\dots v_K$ and $t_j=v_jv_{j+1}\dots v_K$.
\subsection{Matrix representation and integral representation} We will need a convenient representation for
the expectation on the left-hand side of \eqref{FixK}.
We introduce a tri-diagonal matrix
\begin{equation*}\label{MM}\mM_t:=\left[\begin{matrix}
  b_0 & a_0 t & 0 &0 &\cdots \\
  \tfrac{c_1}{t} & b_1 &a_1 t &0 &\\
  0 & \tfrac{c_2}{t} & b_2&a_2 t &\\
 0 & 0& \tfrac{c_3}{t} &  b_3 &\ddots\\
 \vdots &&&\ddots&\ddots
\end{matrix}\right],
\end{equation*}
and infinite column vectors
$$
\vec V_\vvalpha(z) :=[\alpha_n\,z^n,\,n=0,1,\ldots]^T\quad\mbox{and}\quad\vec W_\vvbeta(z) :=[\beta_n\,z^n,\,n=0,1,\ldots]^T.
$$
We use the following matrix representation for the left-hand side of \eqref{FixK}.
\begin{lemma}[matrix ansatz] \label{L-mat-ans}
For every $K=0,1,\dots$ such that $2K\le L$,  we have
\begin{multline}\label{*}
\EE \bb{z_0^{\gamma_0\topp L} \pp{\prod_{j=1}^K t_j^{\gamma_j\topp L-\gamma_{j-1}\topp L}}\pp{ \prod_{j=1}^K s_j^{\gamma_{L-j}\topp L-\gamma_{L+1-j}\topp L}}z_1^{\gamma_L\topp L}}
\\=\frac{1}{\mathfrak C_{\vvalpha,\vvbeta,L}} \Vec V_\vvalpha(z_0)^T \mM_{t_1}\mM_{t_2}
\cdots \mM_{t_K} \mM_1^{L-2K} \mM_{1/s_K}\cdots \mM_{1/s_2}\mM_{1/s_1}\vec W_\vvbeta(z_1),
\end{multline}
where
\begin{equation}\label{cfrak}\mathfrak C_{\vvalpha,\vvbeta,L}=\vec V_{\vv \alpha}(1)^T \mM_1^L\vec W_{\vv \beta}(1)\end{equation} is the normalization constant.
\end{lemma}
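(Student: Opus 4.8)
The plan is to read $\mM_t$ as a transfer matrix and expand both sides of \eqref{*} as sums over Motzkin paths. The key observation is that the $(m,n)$ entry of $\mM_t$ encodes a single admissible step from altitude $m$ to altitude $n$: it equals $a_m t$ for an up-step ($n=m+1$), $b_m$ for a horizontal step ($n=m$), and $c_m/t$ for a down-step ($n=m-1$), and vanishes otherwise. In every case $(\mM_t)_{m,n}$ is exactly the edge weight of that step (the single factor it contributes to $w(\vv\gamma)$ in \eqref{w}, which depends on the left endpoint altitude $m$) multiplied by $t^{\,n-m}$, i.e.\ by $t$ raised to the altitude increment of the step.

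First I would expand the matrix product on the right-hand side of \eqref{*} by inserting a summation index between every pair of adjacent matrices, turning it into a sum over sequences $(\gamma_0,\dots,\gamma_L)$; the nonzero contributions come precisely from admissible paths in $\calM\topp L$. The hypothesis $2K\le L$ guarantees that the three blocks act on disjoint ranges of steps: steps $1,\dots,K$ carry $\mM_{t_1},\dots,\mM_{t_K}$, the $L-2K$ middle steps carry $\mM_1$, and the final $K$ steps carry $\mM_{1/s_K},\dots,\mM_{1/s_1}$. Multiplying the matched entries and using the observation above, each path contributes $w(\vv\gamma)$ times $\prod_{j=1}^K t_j^{\gamma_j-\gamma_{j-1}}$ from the first block (step $j$ produces $t_j^{\gamma_j-\gamma_{j-1}}$), times $\prod_{j=1}^K s_j^{\gamma_{L-j}-\gamma_{L+1-j}}$ from the last block, since step $L+1-j$ is weighted by $\mM_{1/s_j}$ and hence contributes $(1/s_j)^{\gamma_{L+1-j}-\gamma_{L-j}}=s_j^{\gamma_{L-j}-\gamma_{L+1-j}}$; the middle steps contribute no extra factor.

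The boundary vectors then attach the endpoint weights: $\vec V_\vvalpha(z_0)^T$ supplies $\alpha_{\gamma_0}z_0^{\gamma_0}$ at the initial index and $\vec W_\vvbeta(z_1)$ supplies $\beta_{\gamma_L}z_1^{\gamma_L}$ at the final index. Collecting everything, the right-hand side of \eqref{*}, before dividing by $\mathfrak C_{\vvalpha,\vvbeta,L}$, equals
\[
\sum_{\vv\gamma\in\calM\topp L}\alpha_{\gamma_0}\beta_{\gamma_L}\,w(\vv\gamma)\;z_0^{\gamma_0}z_1^{\gamma_L}\prod_{j=1}^K t_j^{\gamma_j-\gamma_{j-1}}\prod_{j=1}^K s_j^{\gamma_{L-j}-\gamma_{L+1-j}},
\]
which, after division by $\mathfrak C_{\vvalpha,\vvbeta,L}$ and the definition of the probability \eqref{Pr}, is exactly the expectation on the left. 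The formula \eqref{cfrak} for the normalizing constant is the special case $K=0$, $z_0=z_1=1$: the product collapses to $\mM_1^L$, whose $(m,n)$ entry is $\sum_{\vv\gamma\in\calM_{m,n}\topp L}w(\vv\gamma)=\mathfrak W\topp L_{m,n}$, so $\vec V_\vvalpha(1)^T\mM_1^L\vec W_\vvbeta(1)=\sum_{m,n}\alpha_m\mathfrak W\topp L_{m,n}\beta_n=\mathfrak C_{\vvalpha,\vvbeta,L}$ by \eqref{CL}.

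The computation is routine once the dictionary between matrix entries and single steps is in place; the only genuine care is in the last block, where the matrices appear in reversed order $\mM_{1/s_K}\cdots\mM_{1/s_1}$ with reciprocal arguments, so that step $L+1-j$ must be correctly matched with $s_j$ and the altitude increment inverted. This index-matching in the $s$-block is the main (minor) obstacle; everything else is a direct telescoping of the transfer-matrix product.
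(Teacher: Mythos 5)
Your proof is correct and follows essentially the same route as the paper: both arguments rest on reading $\mM_t$ as a transfer matrix whose $(m,n)$ entry is the edge weight of a single step times $t^{n-m}$, so that the product $\mM_{t_1}\cdots\mM_{t_L}$ has entries $\sum_{\vv\gamma\in\calM_{m,n}^{(L)}}w(\vv\gamma)\prod_j t_j^{\gamma_j-\gamma_{j-1}}$, applied to the sequence $(t_1,\dots,t_K,1,\dots,1,1/s_K,\dots,1/s_1)$. The only cosmetic difference is that the paper verifies this transfer-matrix identity by induction on $L$, whereas you expand the product directly; your index-matching in the $s$-block and the $K=0$, $z_0=z_1=1$ derivation of \eqref{cfrak} are both accurate.
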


\begin{proof}
Formula \eqref{*} follows from a more general formula
\begin{equation}\label{m-ans}
 \sum_{m,n=0}^\infty \alpha_m z_0^m \beta_n z_1^n \sum_{\vv \gamma\in \calM_{m,n}^{\topp L}}w(\vv \gamma)\prod_{j=1}^L t_j^{\gamma_j-\gamma_{j-1}} =
 \Vec V_\vvalpha(z_0)^T \mM_{t_1}\mM_{t_2}
\cdots \mM_{t_L} \vec W_\vvbeta(z_1)
\end{equation}
applied to the sequence $(t_1,\dots,t_L)$ of the form $(t_1,\dots,t_K,1,\dots,1,1/s_K,\dots,1/s_1)$.

To prove \eqref{m-ans}, consider an infinite matrix  $\mW(L)$ with entries
\begin{equation}\label{mwL}[\mW (L)]_{m,n}:=\sum_{\vv\gamma\in \calM_{m,n}^{\topp L}}w(\vv \gamma)\prod_{j=1}^L t_j^{\gamma_j-\gamma_{j-1}} .\end{equation}
It is clear that the left-hand side of \eqref{m-ans} is
$$\Vec V_\vvalpha(z_0)^T  \mW(L) \vec W_\vvbeta(z_1).$$
It remains to  verify that
$$\mW(L)=
\mM_{t_1}\mM_{t_2}
\cdots \mM_{t_L}.$$
With the case $L=0$ being trivial, we proceed by induction.  In view of \eqref{mwL} we have
$$[\mW(L+1)]_{m,n}=t_{L+1}[\mW(L)]_{m,n-1}a_{n-1}+[\mW(L)]_{m,n}b_n+t_{L+1}^{-1} [\mW(L)]_{m,n+1}c_{n+1}.$$
Thus $\mW(L+1)=\mW(L)\mM_{t_{L+1}}$.
\end{proof}

The key step in our proof of the boundary limit theorem  is an integral representation for the left-hand side of \eqref{FixK} based on the orthogonality measure of the orthogonal polynomials determined by the edge weights of Motzkin paths.

 Following
\cite{Viennot-1984a} and \cite{flajolet09analytic} with
the sequences  $\vv a,\vv b, \vv c$ of edge weights for the Motzkin paths that appeared in \eqref{w} we now
associate real polynomials  $p_{-1}(x)=0$, $p_0(x)=1,p_1(x),\dots$ defined  by the three step recurrence relation
\begin{equation}
  \label{3-step}
  x p_n(x)=a_n p_{n+1}(x)+b_n p_n(x)+c_np_{n-1}(x),\quad n=0,1,2,\dots
\end{equation}
(With the above conventions that $p_{-1}\equiv 0$,  $p_0\equiv 1$ and $a_n>0$, the recurrence relation determines polynomials $\{p_n(x)\}$ uniquely, with $p_1(x)=(x-b_0)/a_0$.)
By Favard's theorem \citep{ismail09classical}, polynomials $\{p_n(x)\}$  are orthogonal with respect to a probability measure $\nu$ on the real line. We assume that $\vv  a, \vv b,\vv c$ are such that $\nu$ is compactly supported and thus unique.
It is well known that the $L_2$ norm   $ \|p_n\|_2^2:=\int_\RR (p_n(x))^2 \nu(dx)$ is given by the formula
\begin{equation}\label{L2}
  \|p_n\|_2^2=\prod_{k=1}^n \frac{c_k}{a_{k-1}}.
\end{equation}
We need some conditions on the weights of the endpoints $\vvalpha,\vvbeta$ in \eqref{Pr}.  %
\begin{assumption} \label{A-A1A2}
 We assume that
\begin{enumerate}
    \item[($A_1$)]   The series
\begin{equation}\label{Phi}
\phi_{\vv \alpha}(x,z)= \sum_{n=0}^\infty \alpha_n z^n p_n(x)\quad\mbox{ and }\quad
\psi_{\vv \beta}(x,z)=\sum_{n=0}^\infty \beta_n z^n
\frac{p_n(x)}{\|p_n\|_2^2}
\end{equation}
converge absolutely on the support of $\nu$ for all $z\in\CC, |z|\leq 1$.
\item[($A_2$)]
\begin{equation}
  \label{A-2} %
 \int_\RR \sum_{m,n=0}^\infty \alpha_n  \frac{\beta_m} {\|p_m\|_2^2} \abs{x^Lp_n(x)p_m(x)}\nu(dx)<\infty.
\end{equation}
\end{enumerate}
\end{assumption}
In particular, the function $x\mapsto x^L\phi_\vvalpha(x,1)\psi_\vvbeta(x,1)$ is integrable with respect to the measure $\nu$.

Consider the following two infinite column vectors
$$
\vec P(x):=\left[p_i(x),\,i=0,1,\ldots\right]^T\quad \mbox{and}\quad \vec Q(x):=[\wt p_i(x),\,i=0,1,\ldots]^T,
$$
where
\begin{equation}\label{wtp}
\wt p_n(x)=\frac{p_n(x)}{\|p_n\|_2^2}=p_n(x) \prod_{k=1}^{n}\frac{a_{k-1}}{c_k}.
\end{equation}

 Note that  with $a_{-1}:=0$, polynomials $\wt p_n(x)$ %
 satisfy the three step recurrence relation
 $$
 x \wt p_n(x)
 =c_{n+1}\wt p_{n+1}(x)+b_n\wt p_n(x)+ a_{n-1} \wt p_{n-1}(x).
 $$
 In particular,
 \begin{equation}\label{mpq}
 \mM_1^j \vec P(x)=x^j \vec P(x),\quad j\ge 0.
  \end{equation}
The left-hand side of the expression \eqref{*} has the following integral representation.
 \begin{lemma}
   \label{L-IF}
   If \eqref{A-2} holds  and $2K\le L$, then
   \begin{multline}\label{*I}
\EE \bb{z_0^{\gamma_0\topp L} \pp{\prod_{j=1}^K t_j^{\gamma_j\topp L-\gamma_{j-1}\topp L} }\pp{\prod_{j=1}^K s_j^{\gamma_{L-j}\topp L-\gamma_{L+1-j}\topp L}}z_1^{\gamma_L\topp L}}
 =\frac{1}{\mathfrak C_{\vvalpha,\vvbeta,L}} \int x^{L-2K} \Psi_0(x)\Psi_1(x)
 \nu(dx),
\end{multline}
where
$$
\Psi_0(x)={\Vec V_\vvalpha(z_0)^T \mM_{t_1}\mM_{t_2} \cdots \mM_{t_K}  \vec P(x)} \mbox{ and } \Psi_1(x)= {\vec W_\vvbeta(z_1)^T \wt \mM_{s_1}\wt \mM_{s_2} \cdots \wt\mM_{s_K}\vec Q(x)},
$$
with $\wt \mM_{s}=\mM_{1/s}^T$. Moreover, $\mathfrak C_{\vvalpha,\vvbeta,L}$, introduced in \eqref{cfrak}, can be written as
$$\mathfrak C_{\vvalpha,\vvbeta,L}=\int x^L\pp{\vec V_{\vv \alpha}(1)^T \vec P(x)}
\pp{\vec W_{\vv \beta}(1)^T \vec Q(x)}\nu(dx).$$
 \end{lemma}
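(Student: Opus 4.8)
The plan is to start from the matrix ansatz \eqref{*} of Lemma \ref{L-mat-ans} and turn the finite matrix product into an integral by inserting the spectral resolution of $\mM_1$ furnished by the orthogonal polynomials. The single algebraic fact I would isolate first is the banded spectral identity
\[
(\mM_1^{\,j})_{m,n}=\int x^{\,j}\,p_m(x)\,\wt p_n(x)\,\nu(dx),\qquad j=0,1,\dots.
\]
To obtain it I would read \eqref{mpq} entrywise as $\sum_n(\mM_1^{\,j})_{m,n}p_n(x)=x^{\,j}p_m(x)$, where the sum is finite because $\mM_1^{\,j}$ is banded of width $j$; multiplying by $\wt p_k(x)$, integrating against $\nu$, and using the biorthogonality $\int p_n(x)\wt p_k(x)\,\nu(dx)=\delta_{n,k}$ that follows from \eqref{L2}--\eqref{wtp} then gives the identity. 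No convergence issue arises at this stage, since each row of $\mM_1^{\,j}$ has only finitely many nonzero entries.

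Next I would substitute this identity into the right-hand side of \eqref{*}. Introducing the row vector $\vec U^{\,T}:=\vec V_\vvalpha(z_0)^T\mM_{t_1}\cdots\mM_{t_K}$ and the column vector $\vec Z:=\mM_{1/s_K}\cdots\mM_{1/s_1}\vec W_\vvbeta(z_1)$, the quantity in Lemma \ref{L-mat-ans} equals $\vec U^{\,T}\mM_1^{\,L-2K}\vec Z=\sum_{m,n}U_m\,(\mM_1^{\,L-2K})_{m,n}\,Z_n$. Replacing $(\mM_1^{\,L-2K})_{m,n}$ by its integral and interchanging the double sum with the integration yields
\[
\int x^{\,L-2K}\Big(\sum_m U_m\,p_m(x)\Big)\Big(\sum_n Z_n\,\wt p_n(x)\Big)\,\nu(dx)
=\int x^{\,L-2K}\,\Psi_0(x)\,\Psi_1(x)\,\nu(dx),
\]
because $\Psi_0(x)=\vec U^{\,T}\vec P(x)$ and, after transposing the scalar $\Psi_1$, $\Psi_1(x)=\vec Z^{\,T}\vec Q(x)$. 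Taking $K=0$, $z_0=z_1=1$ and all $t_j=s_j=1$ in the same computation applied to \eqref{cfrak} produces the stated integral formula for $\mathfrak C_{\vvalpha,\vvbeta,L}$.

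The main obstacle is the legitimacy of interchanging the infinite double summation with the integration, and this is exactly what Assumption \ref{A-A1A2} is designed to secure. By ($A_1$), since $\vec U$ and $\vec Z$ arise from $\vvalpha,\vvbeta$ through finitely many banded multiplications — so that each $U_m$, $Z_n$ is a finite edge-weighted combination of the original coefficients at nearby indices — the series $\Psi_0=\sum_m U_m p_m$ and $\Psi_1=\sum_n Z_n\wt p_n$ converge absolutely on the compact support of $\nu$. Condition ($A_2$), i.e.\ \eqref{A-2}, provides the integrable domination needed to apply Fubini's theorem to the array $\{U_m Z_n(\mM_1^{\,L-2K})_{m,n}\}_{m,n}$; here the factor $|x|^{\,L-2K}$ is bounded on the compact support since $0\le L-2K\le L$, which lets one reduce to the majorant already controlled by \eqref{A-2}. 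I expect verifying this absolute summability, rather than the algebraic identity itself, to be the genuinely delicate point.
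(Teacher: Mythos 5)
Your argument is essentially the paper's: the paper inserts the resolution of the identity $\mI=\int \vec P(x)\vec Q(x)^T\,\nu(dx)$ into the matrix product from Lemma \ref{L-mat-ans}, applies \eqref{mpq}, and invokes Assumption \eqref{A-2} to justify Fubini, which is exactly your entrywise identity $(\mM_1^{\,j})_{m,n}=\int x^{j}p_m\wt p_n\,d\nu$ in vector form. Your version is, if anything, slightly more explicit about where the banded structure and the biorthogonality enter, but it is the same proof.
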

 \begin{proof}
 We use    expression \eqref{m-ans} with $(t_1,\dots,t_L)$ of the form $$(t_1,\dots,t_K,1,\dots,1,1/s_K,\dots,1/s_1).$$
 Proceeding somewhat formally, we write the infinite  identity matrix as the integral,
  $$\mI=\int\, \vec P(x) \vec Q(x)^{T}\,\nu(dx)$$ and rewrite the product of matrices in \eqref{*} as
 \begin{align*}
  &\mM_{t_1}\mM_{t_2}
\cdots \mM_{t_K} \mM_1^{L-2K} \mM_{1/s_K}\cdots \mM_{1/s_2}\mM_{1/s_1}
\\=& \mM_{t_1}\mM_{t_2}
\cdots \mM_{t_K} \mM_1^{L-2K}\pp{\int\,\vec P(x)\vec Q(x)^T\,\nu(dx)}\, \mM_{1/s_K}\cdots \mM_{1/s_2}\mM_{1/s_1}
\\=&
\int \mM_{t_1}\mM_{t_2}
\cdots \mM_{t_K}\pp{\mM_1^{L-2K}\vec P(x)}\,\vec Q(x)^T\, \mM_{1/s_K}\cdots \mM_{1/s_2}\mM_{1/s_1}\nu(dx)
\\\stackrel{\eqref{mpq}}{=}&\int x^{L-2K} \mM_{t_1}\mM_{t_2}
\cdots \mM_{t_K}\vec P(x) \,\vec Q(x)^T \mM_{1/s_K}\cdots \mM_{1/s_2}\mM_{1/s_1}\nu(dx).
 \end{align*}
 Therefore, the right-hand side of \eqref{*} becomes  %
\begin{multline*}
   \tfrac{1}{\mathfrak C_{\vvalpha,\vvbeta,L}} \, \int x^{L-2K} \pp{\Vec V_\vvalpha(z_0)^T \mM_{t_1} \mM_{t_2}
\cdots \mM_{t_K}\vec P(x)} %
\pp{ \vec Q(x)^T \mM_{\frac{1}{s_K}}\cdots \mM_{\frac{1}{s_2}}\mM_{\frac{1}{s_1}}\vec W_\vvbeta(z_1)} \nu(dx),
\end{multline*}
 and the result follows from the symmetry of the dot product, $\vec u^T\vec v=\vec v^T \vec u$.
 (Assumption \eqref{A-2} is responsible for Fubini's theorem, which is used to switch the integral with the infinite sums above. This avoids formal integration of the product of infinite matrices.)
 \end{proof}
 We will need the following elementary lemma. (See also \cite[Lemma A.1]{Bryc-Wang-2023b}.)
 \begin{lemma}
  \label{L-lim}
  Let  $\mu$ be a probability measure with  $B\in {\textnormal{supp}}(\mu) \subseteq [A,B]$ with $|A|<B<\infty$. Let $F$ be a real function which is bounded on $\mathrm{supp}(\mu)$ and left-continuous at $B$.

  Then
  \begin{equation}
    \label{Alexey}
    \lim_{L\to\infty}\frac{\int F(x)x^L \mu(dx)}{\int x^L \mu(dx)}=F(B).
  \end{equation}
\end{lemma}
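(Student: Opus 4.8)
The plan is to reduce \eqref{Alexey} to a concentration-of-mass claim for a family of probability measures and then read off the limit from the left-continuity of $F$ at the top of the support. The natural device is the normalized measure
\[
\nu_L(dx)=\frac{|x|^L\,\mu(dx)}{\int |x|^L\,\mu(dx)},
\]
which is well defined for every $L$ because $B>0$ lies in $\mathrm{supp}(\mu)$, so $\int|x|^L\,\mu(dx)>0$. Working with $|x|^L$ rather than $x^L$ sidesteps the oscillating sign of $x^L$ on the negative part of the support; the sign will be restored at the very end.

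First I would show that $\nu_L$ concentrates at $B$, i.e.\ $\nu_L([A,B-\delta])\to 0$ for every $\delta\in(0,B-|A|)$. This is the main step, and it uses the hypothesis $|A|<B$ in an essential way through a two-scale comparison. On the one hand, since $\delta<B-|A|$ one has $|x|\le B-\delta$ on $[A,B-\delta]$, so the numerator is at most $(B-\delta)^L$. On the other hand, because $B\in\mathrm{supp}(\mu)$ the mass $\mu((B-\tfrac\delta2,B])$ is strictly positive, whence $\int|x|^L\,\mu(dx)\ge (B-\tfrac\delta2)^L\,\mu((B-\tfrac\delta2,B])$. Dividing,
\[
\nu_L([A,B-\delta])\le \frac{1}{\mu((B-\tfrac\delta2,B])}\Big(\frac{B-\delta}{B-\tfrac\delta2}\Big)^L\xrightarrow[L\to\infty]{}0,
\]
since $(B-\delta)/(B-\tfrac\delta2)<1$. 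Bounding the far integral at scale $B-\delta$ while lower-bounding the full integral at the strictly larger scale $B-\tfrac\delta2$ is precisely what makes the ratio decay geometrically.

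Next I would restore the sign. If $A<0$, splitting at $0$ and using $|A|<B$ gives $\int_{[A,0)}|x|^L\,\mu(dx)\le |A|^L$, so $\nu_L([A,0))\le\nu_L([A,B-\delta])\to 0$ for small $\delta$. Hence
\[
\frac{\int x^L\,\mu(dx)}{\int |x|^L\,\mu(dx)}=\nu_L([0,B])+(-1)^L\nu_L([A,0))\longrightarrow 1,
\]
so in particular $\int x^L\,\mu(dx)>0$ for all large $L$. The same splitting applied with the bounded factor $F$ yields
\[
\frac{\int F(x)\,x^L\,\mu(dx)}{\int |x|^L\,\mu(dx)}=\int F\,d\nu_L+(-1)^L\!\!\int_{[A,0)}\!F\,d\nu_L=\int F\,d\nu_L+o(1),
\]
because the last term is bounded by $\|F\|_\infty\,\nu_L([A,0))\to 0$.

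Finally, I would evaluate $\int F\,d\nu_L$. Given $\e>0$, left-continuity of $F$ at $B$ provides $\delta\in(0,B-|A|)$ with $|F(x)-F(B)|<\e$ on $(B-\delta,B]$; splitting the integral and using boundedness of $F$ on the complement,
\[
\Big|\int F\,d\nu_L-F(B)\Big|\le \e\,\nu_L((B-\delta,B])+2\|F\|_\infty\,\nu_L([A,B-\delta]),
\]
whose second term tends to $0$ by the concentration step, giving $\limsup_L|\int F\,d\nu_L-F(B)|\le\e$. Dividing numerator and denominator of \eqref{Alexey} by $\int|x|^L\,\mu(dx)$ and combining the three displays, the numerator tends to $F(B)$ and the denominator to $1$, which yields the claim. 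I expect the two-scale concentration estimate to be the crux; once $\nu_L((B-\delta,B])\to 1$ is in hand, the sign bookkeeping and the left-continuity argument are routine.
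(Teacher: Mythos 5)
Your proof is correct and follows essentially the same route as the paper's: your two-scale estimate comparing $(B-\delta)^L$ on $[A,B-\delta]$ against the lower bound $(B-\tfrac{\delta}{2})^L\mu((B-\tfrac{\delta}{2},B])$ for the full integral is exactly the paper's bound $\EE[X^L]\ge \tfrac12 C(B-\eps/2)^L$ paired with the $(B-\eps)^L$ bound on the complement, and your treatment of the negative part via $|A|^L$ matches the paper's inequality $|x|^L\le x^L+2|A|^L$; the tilted-measure packaging is cosmetic. (One trivial slip: in your display for the numerator the first term should be $\int_{[0,B]}F\,d\nu_L$ rather than $\int F\,d\nu_L$, but the discrepancy is again bounded by $\|F\|_\infty\,\nu_L([A,0))=o(1)$, so nothing is affected.)
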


\begin{proof}%
   Let $X$ be a random variable with the law $\mu$. First, we will derive some estimates on the moments of $X$ and $|X|$. Fix $\eps>0$ such that $B-\eps>|A|$.
   Since $B \in {\textnormal{supp}}(\mu)$,
   we have $C:=\Pr(X>B-\eps/2)>0$.
   And since ${\textnormal{supp}}(\mu) \subseteq [A,B]$ we have
   \begin{multline*}\EE [X^L]=\EE[X^L \ind_{\{X>  B-\eps/2\}}] +\EE[X^L \ind_{\{X \leq B-\eps/2\}}]
   \geq C (B-\eps/2)^L - |A|^L
  \\ =(B-\eps/2)^L \pp{C-\frac{|A|^L}{(B-\eps/2)^L}}.
   \end{multline*}
  The inequalities $B-\epsilon/2>B-\epsilon>|A|$ imply that the term $|A|^L/(B-\epsilon/2)^L$ converges to zero as $L\to +\infty$, therefore $\EE [X^L]> \frac{1}{2} C(B-\eps/2)^L$ for large enough $L\in {\mathbb N}$.
   Next, we use the inequality $|x|^L\le x^L + 2|A|^L$, which is valid for all $x\in [A,B]$ and $L\in {\mathbb N}$, and obtain
   $$
   \frac{\EE[|X|^L]}{\EE[X^L]}\le 1 + 2 \frac{|A|^L}{\EE[X^L]}<1+\frac{4}{C} \frac{|A|^L}{(B-\epsilon/2)^L},
   $$
   which gives us an upper bound $\EE[|X|^L]<2\EE[X^L]$, valid for $L$ large enough.

  Now we are ready to establish \eqref{Alexey}.
 By taking $F-F(B)$ instead of $F$ in \eqref{Alexey}, without loss of generality, we can assume that $F(B)=0$ (clearly,  $F-F(B)$ remains left-continuous at $B$).  Now we estimate, for $L\in {\mathbb N}$ large enough
   \begin{align*}
    \left | \frac{\EE[F(X)X^L ]}{\EE[X^L ]}\right|&\leq
    \frac{\EE[|F(X)||X|^L\ind_{\{X\leq B-\eps\}} ]}{\EE[X^L ]}+\frac{\EE[|F(X)||X|^L\ind_{\{X> B-\eps\}} ]}{\EE[X^L ]}
   \\ &\leq  \frac{2}{C} \|F\|_\infty  \frac{(B-\eps)^L}{(B-\eps/2)^L}+
    2\sup_{x\in[B-\eps,B]}|F(x)|,
   \end{align*}
  where we used inequalities
   $$
   \EE[|F(X)||X|^L\ind_{\{X\leq B-\eps\}} ]\leq \|F\|_\infty(B-\eps)^L \quad \mbox{and} \quad \EE[X^L]\geq \tfrac{1}{2} C(B-\tfrac{\eps}2)^L
   $$
   for the first term and
   $$
   \EE[|F(X)||X|^L\ind_{\{X> B-\eps\}} ]\leq\sup_{x\in[B-\eps,B]}|F(x)|\times \EE[|X|^L]\quad \mbox{and}\quad \EE[|X|^L]<2\EE[X^L]
   $$ for the second term.
   Since $(B-\epsilon)^L/(B-\epsilon/2)^L$ converges to zero when $L\to +\infty$, we obtain
   $$
  \limsup\limits_{L\to +\infty} \left | \frac{\EE[F(X)X^L ]}{\EE[X^L ]}\right| \leq  2 \sup_{x\in[B-\eps,B]}|F(x)|.
   $$
    Taking the limit $\epsilon \to 0^+$, gives us the desired result \eqref{Alexey} in the case $F(B)=0$.
\end{proof}

 \subsection{Limit theorem}

 The following result is a version of \cite[Theorem 1.1]{Bryc-Wang-2023b} for non-constant weights of edges.  It describes the joint limiting behavior of processes $\vv\gamma\topp L$ and $\wt{\vv\gamma}\topp L$ introduced in \eqref{gam} and \eqref{wtgam}.
 \begin{theorem}
   \label{T-L}
   Suppose that  $\textnormal{supp}(\nu)$,  the support of the orthogonality measure $\nu$ of the polynomials \eqref{3-step}, satisfies  $B\in {\textnormal{supp}}(\nu) \subseteq [A,B]$   with  $B>|A|$  and that
 $(A_1)$ and $(A_2)$ from    Assumption \ref{A-A1A2}   hold.
  Then $\pi_n:=p_n(B)>0$, $n=0,1,\ldots$  and
   \[
 \pp{\vv\gamma\topp L ,\,\wt{\vv\gamma}\topp L}\weakto \pp{\vv X,\vv Y}
\]
 as $L\to\infty$, where  $\vv X=\ccbb{X_k}_{k\ge 0}$ and $\vv Y=\ccbb{Y_k}_{k\ge 0}$ are   independent  Markov chains with the same transition probabilities
\begin{equation}\label{trans-geX}   \mathsf Q_{n,m}=\frac{1}{B}\begin{cases}
  a_n \frac{\pi_{n+1}}{\pi_n},   & m=n+1,\\
  b_n,   &m=n, \\
  c_n \frac{\pi_{n-1}}{\pi_n},  &m=n-1, \\
  0, & \mbox{otherwise},
 \end{cases}
\end{equation}
with  $m,n=0,1,\ldots$, and with the initial laws given by
\begin{equation}\label{ini-law}
  \Pr(X_0=n)=\frac{1}{C_{\vv\alpha}}\alpha_n \pi_n, \quad \Pr(Y_0=n)=\frac{1}{C_{\vv\beta}}\beta_n \wt \pi_n, \; n=0,1,\ldots,
\end{equation}
where $C_{\vv\alpha}$, $C_{\vv\beta}$ are the normalizing constants and $\wt \pi_n=\wt p_n(B)$ with $\wt p_n$ given in \eqref{wtp}.
 \end{theorem}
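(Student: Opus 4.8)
The plan is to establish the finite-dimensional generating-function identity \eqref{FixK}, which by the discussion preceding it is equivalent to the claimed weak convergence of the $\ZZ_{\geq 0}$-valued sequences. I would fix $K$ with $2K\le L$ and start from the integral representation of Lemma \ref{L-IF}, which writes the left-hand side of \eqref{FixK} as the ratio $N_L/\mathfrak C_{\vvalpha,\vvbeta,L}$ with $N_L=\int x^{L-2K}\Psi_0(x)\Psi_1(x)\,\nu(dx)$ and $\mathfrak C_{\vvalpha,\vvbeta,L}=\int x^{L}\,g(x)\,\nu(dx)$, where $g(x)=\phi_{\vv\alpha}(x,1)\psi_{\vv\beta}(x,1)$ is the product of the two series in \eqref{Phi} at $z=1$. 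The trick for the mismatch of powers is to rewrite the denominator as $\int x^{L-2K}\bigl(x^{2K}g(x)\bigr)\,\nu(dx)$, so that both integrals carry the common factor $x^{L-2K}$. Dividing $N_L$ and $\mathfrak C_{\vvalpha,\vvbeta,L}$ by $\int x^{L-2K}\nu(dx)$ and applying Lemma \ref{L-lim} with $\mu=\nu$ (legitimate since $B\in\mathrm{supp}(\nu)\subseteq[A,B]$ with $B>|A|$) to $F=\Psi_0\Psi_1$ and to $F=x^{2K}g$ — both bounded on $\mathrm{supp}(\nu)$ and continuous at $B$ by $(A_1)$ — yields
\[
\lim_{L\to\infty}\frac{N_L}{\mathfrak C_{\vvalpha,\vvbeta,L}}=\frac{\Psi_0(B)\Psi_1(B)}{B^{2K}\,g(B)}.
\]
Here $g(B)=C_{\vv\alpha}C_{\vv\beta}>0$, and I would first record that $\pi_n=p_n(B)>0$, since all zeros of $p_n$ lie strictly below $B=\max\mathrm{supp}(\nu)$ while the leading coefficient of $p_n$ is positive.

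Next I would check that \eqref{trans-geX} defines a stochastic matrix and identify its structure. Writing $D=\mathrm{diag}(\pi_0,\pi_1,\dots)$, the matrix $\mathsf Q$ is exactly the Doob-type transform $\tfrac1B D^{-1}\mM_1 D$, and its row sums equal $1$ precisely because $\vec P(B)=(\pi_n)_n$ is the right eigenvector of $\mM_1$ with eigenvalue $B$, i.e. $B\pi_n=a_n\pi_{n+1}+b_n\pi_n+c_n\pi_{n-1}$ from \eqref{3-step} at $x=B$; nonnegativity of the entries is immediate from $a_n>0$, $b_n,c_n\ge 0$ and $\pi_n>0$.

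The heart of the argument is to recognize $\Psi_0(B)\Psi_1(B)/(B^{2K}g(B))$ as the product of the two Markov-chain generating functions on the right of \eqref{FixK}. Setting $R_t:=\tfrac1B D^{-1}\mM_t D$, one has $[R_t]_{n,m}=\mathsf Q_{n,m}t^{\,m-n}$, so for a chain with transition $\mathsf Q$ and initial law $\lambda$ the twisted generating function equals $\lambda_z\,R_{r_1}\cdots R_{r_K}\mathbf 1$, where $\lambda_z=(\lambda_n z^n)_n$ and $\mathbf 1$ is the all-ones vector. For $\vv X$ (with $\lambda_n=\alpha_n\pi_n/C_{\vv\alpha}$) the identities $\lambda_{z_0}D^{-1}=C_{\vv\alpha}^{-1}\vec V_{\vv\alpha}(z_0)^T$ and $D\mathbf 1=\vec P(B)$ collapse this to $\Psi_0(B)/(B^K C_{\vv\alpha})$. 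For $\vv Y$ (with $\lambda_n=\beta_n\wt\pi_n/C_{\vv\beta}$) the same computation produces $\tfrac{1}{B^K C_{\vv\beta}}\vec W_{\vv\beta}(z_1)^T H^{-1}\mM_{s_1}\cdots\mM_{s_K}\vec P(B)$ with $H=\mathrm{diag}(\|p_n\|_2^2)$; here I would invoke the duality relation $H^{-1}\mM_s=\mM_{1/s}^T H^{-1}$, which follows entrywise from $\|p_n\|_2^2/\|p_{n-1}\|_2^2=c_n/a_{n-1}$ in \eqref{L2}, together with $H^{-1}\vec P(B)=\vec Q(B)$ from \eqref{wtp} and $\wt\mM_s=\mM_{1/s}^T$, to rewrite it as $\Psi_1(B)/(B^K C_{\vv\beta})$. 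Multiplying the two factors reproduces the displayed limit and establishes \eqref{FixK}.

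The main obstacle is this last identification, and in particular the $\vv Y$ factor: the integral representation builds $\Psi_1$ from the transposed matrices $\wt\mM_s=\mM_{1/s}^T$ acting on the dual vector $\vec Q$, so matching it to a forward-time chain with transition $\mathsf Q$ requires the self-duality $\mM_s=H\,\mM_{1/s}^T H^{-1}$ encoding the space reversal between $p_n$ and $\wt p_n=p_n/\|p_n\|_2^2$. The remaining technical point is the regularity (boundedness and one-sided continuity at $B$) of the infinite series $\Psi_0,\Psi_1,g$ needed for Lemma \ref{L-lim}, which I would derive from the absolute convergence in $(A_1)$ and the boundedness of the recurrence coefficients accompanying the compactness of $\mathrm{supp}(\nu)$.
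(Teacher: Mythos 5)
Your proposal is correct and follows the same skeleton as the paper's proof: both pass through the integral representation of Lemma \ref{L-IF}, apply Lemma \ref{L-lim} to identify the limit of the ratio as $\Psi_0(B)\Psi_1(B)/(B^{2K}C_{\vv\alpha}C_{\vv\beta})$, and obtain $\pi_n>0$ from the location of the zeros of $p_n$ relative to $B$. Where you genuinely diverge is in matching $\Psi_0(B)/(B^KC_{\vv\alpha})$ and $\Psi_1(B)/(B^KC_{\vv\beta})$ with the twisted generating functions of $\vv X$ and $\vv Y$: the paper proves the identities \eqref{mtmt} and \eqref{msms} by induction on $K$, peeling off one matrix $\mM_{t_1}$ at a time and redistributing its entries over the transition probabilities, whereas you conjugate once and for all, writing $\mathsf Q=\tfrac1B D^{-1}\mM_1D$ with $D=\mathrm{diag}(\pi_n)$, noting $[\tfrac1B D^{-1}\mM_tD]_{n,m}=\mathsf Q_{n,m}t^{m-n}$, and handling the reversed chain via the similarity $\mM_{1/s}^{T}=H^{-1}\mM_sH$ with $H=\mathrm{diag}(\|p_n\|_2^2)$ together with $\vec Q(B)=H^{-1}\vec P(B)$. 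Your route makes the Doob-transform structure explicit, collapses the induction into a single conjugation, and yields stochasticity of $\mathsf Q$ for free from $\mM_1\vec P(B)=B\vec P(B)$ (a point the paper leaves implicit); the paper's induction is more elementary and avoids manipulating infinite matrices, though since everything is tridiagonal the computations are entrywise finite either way. The one issue you flag without fully discharging --- boundedness and left-continuity at $B$ of the series $\Psi_0$, $\Psi_1$ and $\phi_{\vv\alpha}(\cdot,1)\psi_{\vv\beta}(\cdot,1)$ required by Lemma \ref{L-lim} --- is treated at exactly the same level of detail in the paper, so it is not a gap relative to the published argument.
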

 \begin{proof}
We first verify that the transition probabilities \eqref{trans-geX} and \eqref{trans-geY} are well defined, i.e. $\pi_n>0$, $n=0,1,\dots$. Clearly, $\pi_0=1$ and recall that $\pi_{-1}=0$.
 For $n\geq 1$, the polynomial $p_n$ has a leading term $a_0^{-1} a_1^{-1} \dots a_{n-1}^{-1} x^n$, thus $p_n(x) \to +\infty$ as $x\to +\infty$.
  If $\pi_n=p_n(B)$ were negative, this would imply that $p_n$ has a zero outside of the support of the measure of orthogonality, which is impossible, see e.g. \cite{ismail09classical}. The case $p_n(B)=0$ is also impossible, since the interlacing property   of the zeroes of   orthogonal polynomials would imply that $p_{n+1}$ has a zero in the interval $(B,\infty)$.

To determine the limit as $L\to\infty$ of the left-hand side of \eqref{FixK},   we re-write the right-hand side  of \eqref{*I} as
$$
 \frac{\int x^L \nu(dx)}{\int x^L\pp{\vec V_{\vv \alpha}(1)^T \vec P(x)}
\pp{\vec W_{\vv \beta}(1)^T \vec Q(x)}\nu(dx)}  \cdot
 \frac{\int x^{L-2K} \nu(dx)}  {\int x^L \nu(dx)}
 \cdot \frac{\int x^{L-2K} \Psi_0(x) \Psi_1(x)
\nu(dx)}{\int x^{L-2K} \nu(dx)}.
$$
Using Lemma \ref{L-lim}, we see that each of the factors above converges as $L\to\infty$ and we get
\begin{multline} \label{Fact}
    \lim_{L\to\infty}  \EE \bb{z_0^{\gamma_0\topp L} \pp{\prod_{j=1}^K t_j^{\gamma_j\topp L-\gamma_{j-1}\topp L} }\pp{\prod_{j=1}^K s_j^{\gamma_{L-j}\topp L-\gamma_{L+1-j}\topp L}}z_1^{\gamma_L\topp L}}
    \\= \frac{1}{ \pp{\Vec V_\vvalpha(1)^T \vec P(B)}\pp{\vec Q(B)^T\vec W_\vvbeta(1)   }}
    \cdot \frac{1}{B^{2K}}\cdot \Psi_0(B) \Psi_1(B) = \frac{\Psi_0(B)}{B^{K}C_{\vv\alpha}}
  \cdot  \frac{\Psi_1(B)}{B^{K}C_{\vv\beta} }.
\end{multline}
We will show that the last %
expressions matches the product on the right-hand side of  \eqref{FixK}.

To this end it suffices to prove
\begin{equation}\label{mtmt}
\frac{1}{B^K}\,\Vec V_\vvalpha(z)^T \mM_{t_1}\mM_{t_2}  \cdots \mM_{t_K}\vec P(B)=\sum_{n\ge 0} \alpha_n\pi_n z^n\,\EE(T_{1:K}|X_0=n),
\end{equation}
where $T_{i:K}=\prod_{j=i}^K t_j^{X_j-X_{j-1}}$, as well as
\begin{equation}\label{msms}\frac{1}{B^K}\,\Vec W_\vvbeta(z)^T \wt \mM_{s_1}\wt \mM_{s_2} \cdots \wt\mM_{s_K}\vec Q(B)=\sum_{n\ge 0} \beta_n\wt \pi_n z^n\,\EE(S_{1:K}|Y_0=n),
\end{equation}
where $S_{i:K}=\prod_{j=i}^K s_j^{Y_j-Y_{j-1}}$,
and
\begin{equation}\label{trans-geY}
   \Pr(Y_{k+1}=n+\delta|Y_k=n)=\frac{1}{B}\begin{cases}
  c_{n+1} \frac{\wt \pi_{n+1}}{\wt \pi_n},  & \delta=1,\\
 b_n,   &\delta=0, \\
  a_{n-1} \frac{\wt \pi_{n-1}}{\wt \pi_n},  &\delta=-1.
 \end{cases}
\end{equation}
Note that processes $\vv X$ and $\vv Y$ have the same transition probabilities, since
$$
\frac{\wt \pi_{n+1}}{\wt \pi_n}=\frac{\pi_{n+1}}{\pi_n} \frac{a_n}{c_{n+1}}.
$$

We prove only \eqref{mtmt} since the proof of \eqref{msms} follows along the same lines. We use induction with respect to $K$.

The case of $K=0$ is immediate since the left-hand side of \eqref{mtmt} is $\Vec V_\vvalpha(z)^T\vec P(B)=\sum_{n\ge 0}\,\alpha_n\pi_n\,z^n$. For $K>0$ we first note that the left-hand side of \eqref{mtmt} can be written as
$$
R_K:=\frac{1}{B^{K-1}}\,\Vec V_{\wt \alpha(z,t_1)}(z)^T \mM_{t_2}  \cdots \mM_{t_K}\vec P(B),
$$
where $\wt \alpha_n(z,t)=(\alpha_{n-1}a_{n-1}\frac{t}{z}+\alpha_n b_n+\alpha_{n+1} c_{n+1}\frac{z}{t})/B$, $n\ge 0$. Consequently, by induction assumption for $K-1$ after shifting the indexes of summation we get
\begin{multline*}
R_K=\sum_{n\ge 0}\,\wt \alpha_n(z,t_1)z^n\pi_n\,\EE(T_{2:K}|X_1=n) %
=\sum_{n\ge 0}\Big(\alpha_{n-1}a_{n-1}\frac{t_1}{z}+\alpha_n b_n+\alpha_{n+1} c_{n+1}\frac{z}{t_1}\Big)\frac{\pi_nz^n}{B}\,\EE(T_{2:K}|X_1=n)\\
=\sum_{n\ge 0}\,\alpha_n\pi_nz^n\Big(t_1\EE(T_{2:K}|X_1=n+1)\,\frac{a_n\pi_{n+1}}{B\pi_n}
 +\EE(T_{2:K}|X_1=n)\,\frac{b_n}{B}
+\frac{1}{t_1}\,\EE(T_{2:K}|X_1=n-1)\,\frac{c_n\pi_{n-1}}{B\pi_n}\Big)\\
=\sum_{n\ge 0}\alpha_n\pi_n z^n\,\EE\left(t_1^{X_1-n}T_{2:K}|X_0=n\right)=\sum_{n\ge 0} \alpha_n\pi_n z^n\,\EE(T_{1:K}|X_0=n).
\end{multline*}

Formula \eqref{Fact}   shows that the limit processes $\vv X$ and $\vv Y$ are independent. This ends the proof.
\end{proof}

For proofs of local limit theorems, we need
to determine the limit of
$$\sqrt{N}\Pr\pp{X_{\floor{Nt}}=y_N \middle| X_{0}=x_N}$$ as $N\to\infty$
for suitably chosen sequences $(y_N)$ and $(x_N)$.
The following  formula %
 is
useful in computing  such limits.
\begin{proposition}
   For  the  Markov   chain   $\{X_k\}$, we have
      \begin{equation}
       \label{IR-2}
          \Pr(X_k=n|X_0=m)= \frac{\pi_n}{\pi_m} \frac{1}{B^k}\int_{[A,B]} x^k p_m(x)\wt p_n(x)\,\nu(d x).
   \end{equation}
\end{proposition}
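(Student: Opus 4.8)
The plan is to recognize the one-step transition matrix $\mathsf Q=(\mathsf Q_{n,m})$ from \eqref{trans-geX} as a diagonal conjugate of the Jacobi matrix $\mM_1$, and then to read off its powers from the orthogonal-polynomial machinery already in place. Since $\{X_k\}$ is a countable-state Markov chain, the Chapman--Kolmogorov equations give $\Pr(X_k=n\mid X_0=m)=[\mathsf Q^k]_{m,n}$, so the whole statement reduces to an identity for the entries of $\mathsf Q^k$.

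First I would record the conjugation. Set $D=\textnormal{diag}(\pi_0,\pi_1,\dots)$, where $\pi_n=p_n(B)>0$ by Theorem \ref{T-L}. Comparing \eqref{trans-geX} entrywise with the tridiagonal $\mM_1$, whose nonzero entries are $[\mM_1]_{n,n-1}=c_n$, $[\mM_1]_{n,n}=b_n$, $[\mM_1]_{n,n+1}=a_n$, one sees directly that $\mathsf Q_{n,m}=\tfrac1B\,\tfrac{\pi_m}{\pi_n}\,[\mM_1]_{n,m}$, i.e.\ $\mathsf Q=\tfrac1B\,D^{-1}\mM_1 D$. As $\mM_1$ is banded, all products and powers are well defined entrywise as finite sums and conjugation respects taking powers, giving
\[
[\mathsf Q^k]_{m,n}=\frac{1}{B^k}\,\frac{\pi_n}{\pi_m}\,[\mM_1^k]_{m,n}.
\]

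It then remains to prove the spectral formula $[\mM_1^k]_{m,n}=\int_{[A,B]} x^k\,p_m(x)\,\wt p_n(x)\,\nu(dx)$. For this I would invoke the eigenrelation \eqref{mpq}, namely $\mM_1^k\vec P(x)=x^k\vec P(x)$, which entrywise reads $x^k p_m(x)=\sum_j[\mM_1^k]_{m,j}\,p_j(x)$ (a finite sum, since $|j-m|\le k$). Multiplying by $\wt p_n(x)$ and integrating against $\nu$, and then using the biorthogonality $\int p_j(x)\wt p_n(x)\,\nu(dx)=\delta_{j,n}$ that follows from \eqref{L2} and the normalization \eqref{wtp}, collapses the sum to the single surviving term $[\mM_1^k]_{m,n}$. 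Substituting this into the displayed expression for $[\mathsf Q^k]_{m,n}$ gives \eqref{IR-2}.

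Because $\mM_1$ is tridiagonal, every sum appearing above is finite and no analytic subtlety (interchange of sum and integral, Fubini) intervenes, so I do not anticipate a genuine obstacle. The only point requiring care is the bookkeeping of the normalization from \eqref{wtp}: one must pair the un-normalized $p_m$ (on the $X_0=m$ side) with the normalized $\wt p_n$ (on the $X_k=n$ side), which is precisely what makes the clean biorthogonality $\int p_j\wt p_n\,\nu(dx)=\delta_{j,n}$ applicable instead of a weighted version.
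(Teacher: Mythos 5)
Your proof is correct, but it follows a different route from the paper's. The paper argues combinatorially: it first observes that along any path $\vv\gamma\in\calM_{m,n}^{\topp k}$ the transition probabilities \eqref{trans-geX} telescope, giving $\Pr(X_1=\gamma_1,\dots,X_k=\gamma_k\mid X_0=\gamma_0)=\tfrac{w(\vv\gamma)}{B^k}\tfrac{\pi_{\gamma_k}}{\pi_{\gamma_0}}$, then sums over all paths and converts $\sum_{\vv\gamma\in\calM_{m,n}^{\topp k}}w(\vv\gamma)$ into the moment integral $\int x^k p_m p_n\,d\nu/\|p_n\|_2^2$ via Viennot's identity. You instead work at the level of the transition matrix: the conjugation $\mathsf Q=\tfrac1B D^{-1}\mM_1 D$ with $D=\mathrm{diag}(\pi_n)$ is verified correctly against the entries of $\mM_1$, the eigenrelation \eqref{mpq} combined with the biorthogonality $\int p_j\wt p_n\,d\nu=\delta_{j,n}$ (which indeed follows from \eqref{L2} and \eqref{wtp}) yields $[\mM_1^k]_{m,n}=\int x^k p_m\wt p_n\,d\nu$, and all sums are finite because $\mM_1$ is banded, so no Fubini issue arises. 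This is essentially the Karlin--McGregor spectral representation that the paper alludes to in the parenthetical remark following the proposition. The two arguments are of course two faces of the same identity (Viennot's formula is itself usually derived from the matrix ansatz), but yours is more self-contained within the paper -- it uses only \eqref{mpq}, \eqref{wtp} and \eqref{trans-geX}, and does not need the external combinatorial citation -- while the paper's version makes the path-weight cancellation explicit, which is conceptually closer to how the chain was constructed from weighted Motzkin paths in the first place.
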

(This resembles   \cite[p. 67]{karlin1959random}.)
\begin{proof} Recall notation \eqref{w}.
Fix $\vv \gamma\in \calM_{m,n}\topp k$. Due to cancellations,
$$\Pr(X_1=\gamma_1,X_2=\gamma_2,\dots,X_k=\gamma_k|X_0=\gamma_0)=\frac{w(\vv \gamma)}{B^k} \frac{\pi_{\gamma_k}}{\pi_{\gamma_0}},$$
so
$$\Pr(X_k=n|X_0=m)= \frac{\pi_n}{B^k\pi_m} \sum_{\vv \gamma \in\calM_{m,n}\topp k} w(\vv \gamma).  $$
By \citet[(5)]{Viennot-1984a} (see also \cite[Proposition 2.1]{Bryc-Wang-2023b}) we
have
\begin{equation*}  \label{Viennot5}
  \int_\RR p_m(x)p_n(x) x^L\,\nu(d x) =\|p_n\|_2^2\sum_{\gamma\in \calM_{m,n}\topp L} w(\vv \gamma).
\end{equation*}
Consequently,
$$\Pr(X_k=n|X_0=m)= \frac{\pi_n}{B^k\pi_m \|p_n\|_2^2}\int_{\RR}\,x^k p_m(x)p_n(x)\, \nu(d x).$$
\end{proof}
\section{Properties of the Al-Salam--Chihara polynomials}\label{sec:AlSal}
The general theory developed in Section \ref{sec:WMP} can be advanced further when it is specialized to the setting considered in Section \ref{sec:Intro}. To proceed, we need to recall the definition and  properties of  the Al-Salam--Chihara polynomials.

We will be working with the Al-Salam-Chihara polynomials $\{Q_n\}$ in real variable $x$, defined by the three term recurrence relation
\begin{multline}\label{Al-Salam}
2 x  Q_n(x;a,b|q)  =Q_{n+1}(x;a,b|q)+(a+b)q^n  Q_n(x;a,b|q)+(1-q^n)(1-ab q^{n-1}) Q_{n-1}(x;a,b|q),
\end{multline}
$n=0,1,\dots$,
where parameters  $a,b$ are either real or complex conjugate, $|ab|<1$, and $q\in[0,1)$. As usual, we initialize the recurrence with $Q_{-1}\equiv 0$ and $Q_{0}\equiv 1$.

It is known, see \cite{ismail09classical,koekoek2010hypergeometric} or \cite{koekoek1994askey}, that if
$|a|<1, |b|<1$, then the polynomials $\{Q_n\}$ are orthogonal with respect to the unique probability density supported on $[-1,1]$ and given by
\begin{equation}\label{Q-w}
g(x) = \frac{(q,ab;q)_\infty}{2\pi\sqrt{1-x^2}} \frac{\left|(e^{2 \i \theta};q)_\infty\right|^2}{\left|(a e^{\i\theta},be^{\i \theta};q)_\infty\right|^2}, \quad x=\cos \theta.
\end{equation}
It is also known that the generating function of $\{Q_n\}$  is well defined for complex $|t|<1$, and is given by the ratio of
infinite product:
\begin{equation}\label{Q-gen}
\sum_{n=0}^\infty \frac{Q_n(\cos \theta;a,b|q) }{(q;q)_n}t^n=\frac{(at,bt;q)_\infty}{(e^{\i\theta} t,e^{-\i\theta} t;q)_\infty}.
\end{equation}
Invoking the $q$-binomial theorem, see, e.g., \cite[(1.3.2)]{gasper2004basic}, the latter implies   the following formula, which can also be recalculated from \cite[ p.  50]{Berg-Ismail-96} or \cite[(15.1.12)]{ismail09classical}:
\begin{equation}\label{Berg-Ismail}
  \frac{Q_n(\cos \theta;a,b|q)}{(q;q)_n}= \sum_{k=0}^n \frac{(b e^{\i\theta};q)_k}{(q;q)_k}e^{-\i k \theta} \frac{(a e^{-\i\theta};q)_{n-k}}{(q;q)_{n-k}}e^{\i (n-k) \theta}.
\end{equation}
In particular, at the boundary of the interval of orthogonality, we have
\begin{equation}\label{Qn-expl}
   \frac{Q_n(1;a,b|q) }{(q;q)_n}=   \sum_{k=0}^n  \frac{ (a;q)_k (b;q)_{n-k}}{(q;q)_k(q;q)_{n-k}}.
\end{equation}

\subsection{Maximum of the Al-Salam--Chihara polynomials }
The Chebyshev polynomials $U_n$ and $ T_n$  on the interval $[-1, 1]$ are  bounded in absolute value by their value at $x=1$. Similar result holds for the $q$-Hermite polynomials, see
 \cite[Theorem 13.1.2]{ismail09classical}.
 For our proofs we need to extend this property to the Al-Salam--Chihara polynomials.
\begin{proposition} \label{L4.5+}
Let $q\in [0,1)$,
and $a,b$ be  real or a complex conjugate pair such that $0\leq ab\leq 1$ and $a+b\leq 0$. Then for all $x\in [-1,1]$ we have
\begin{equation}\label{Al_Salam_Chihara_bound*}
| Q_n(x;a,b \vert q)| \le Q_n(1;a,b \vert q).
\end{equation}
\end{proposition}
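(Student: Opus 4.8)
The plan is to expand the Al-Salam--Chihara polynomials in the basis of continuous $q$-Hermite polynomials and to exploit the fact that, under the stated sign conditions, the connection coefficients are nonnegative. A naive attempt via the explicit formula \eqref{Berg-Ismail} succeeds when $a,b$ are real: then $a,b\le 0$, one checks $|(be^{\i\theta};q)_k|\le (b;q)_k$ and $|(ae^{-\i\theta};q)_{n-k}|\le (a;q)_{n-k}$ by a direct modulus computation, and the triangle inequality dominates each summand by the (positive) corresponding summand of \eqref{Qn-expl}. This breaks down when $a=\bar b$ is genuinely complex, because then the summands of \eqref{Berg-Ismail} are complex and $\sum_k|\text{term}_k|$ already exceeds $Q_n(1)$ at $\theta=0$. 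I would therefore use a single uniform argument. Recall that the continuous $q$-Hermite polynomials $H_m(\cos\theta\vert q)=\sum_{k=0}^m \binom{m}{k}_q e^{\i(m-2k)\theta}$ satisfy $|H_m(\cos\theta\vert q)|\le \sum_{k=0}^m\binom{m}{k}_q=H_m(1\vert q)$ by the triangle inequality, since the Gaussian binomials are nonnegative (this is the $q$-Hermite case, see \cite[Theorem 13.1.2]{ismail09classical}). It then suffices to write $Q_n(x;a,b\vert q)=\sum_{m=0}^n c_{n,m}\,H_m(x\vert q)$ with $c_{n,m}\ge 0$, for then
\[
|Q_n(\cos\theta;a,b\vert q)|\le \sum_{m=0}^n c_{n,m}\,|H_m(\cos\theta\vert q)|\le \sum_{m=0}^n c_{n,m}\,H_m(1\vert q)=Q_n(1;a,b\vert q).
\]

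First I would read off the connection coefficients from the generating functions. The $q$-Hermite generating function is \eqref{Q-gen} with $a=b=0$, namely $\sum_{m\ge0}H_m(\cos\theta\vert q)t^m/(q;q)_m=1/(te^{\i\theta},te^{-\i\theta};q)_\infty$, so dividing \eqref{Q-gen} by it gives
\[
\sum_{n\ge 0}\frac{Q_n(\cos\theta;a,b\vert q)}{(q;q)_n}t^n=(at,bt;q)_\infty\sum_{m\ge 0}\frac{H_m(\cos\theta\vert q)}{(q;q)_m}t^m .
\]
Writing $(at,bt;q)_\infty=\sum_{j\ge 0}e_j\,t^j/(q;q)_j$ and matching the coefficient of $t^n$ yields $Q_n=\sum_{m=0}^n\binom{n}{m}_q e_{n-m}H_m$, so $c_{n,m}=\binom{n}{m}_q e_{n-m}$. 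As the Gaussian binomials are nonnegative, everything reduces to proving $e_j\ge 0$ for all $j$.

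The key step, and the only place the hypotheses $a+b\le 0$ and $ab\ge 0$ are used, is this nonnegativity. I would obtain it from the factorization
\[
(at,bt;q)_\infty=\prod_{j\ge 0}(1-aq^jt)(1-bq^jt)=\prod_{j\ge 0}\bigl(1-(a+b)q^jt+ab\,q^{2j}t^2\bigr).
\]
Under $a+b\le 0$, $ab\ge 0$ and $q\in[0,1)$, each quadratic factor has nonnegative coefficients as a polynomial in $t$: the constant term is $1$, the linear coefficient is $-(a+b)q^j\ge 0$, and the quadratic coefficient is $ab\,q^{2j}\ge 0$. Since a product of power series with nonnegative coefficients again has nonnegative coefficients, every Taylor coefficient of $(at,bt;q)_\infty$ in $t$ is nonnegative, i.e.\ $e_j\ge 0$. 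This argument is uniform over the two admissible regimes: if $a,b$ are real then $ab\ge 0$ and $a+b\le 0$ force $a,b\le 0$, while if $a=\bar b$ then $ab=|a|^2\ge 0$ and $a+b=2\,\re(a)\le 0$, so the sign conditions on the quadratic factors hold in both cases.

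Combining the three ingredients proves \eqref{Al_Salam_Chihara_bound*}. I expect the only remaining work to be bookkeeping: justifying the generating-function manipulations as genuine power-series identities (all the relevant series converge for $|t|<1$ and $(at,bt;q)_\infty$ is entire in $t$, so matching coefficients is legitimate), and recording that $Q_n(1;a,b\vert q)>0$, which follows because $c_{n,n}=\binom{n}{n}_q e_0=1$ and $H_n(1\vert q)>0$, so the right-hand side of \eqref{Al_Salam_Chihara_bound*} is a genuine positive maximum consistent with $\pi_n=p_n(B)>0$ in Theorem \ref{T-L}. The substantive mathematical content is the factorization into quadratics with nonnegative coefficients, which is exactly what the conditions $a+b\le 0$ and $ab\ge 0$ are designed to guarantee.
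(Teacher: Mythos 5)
Your proof is correct, and it takes a genuinely different route from the paper. The paper expands $Q_n$ in the Chebyshev polynomials $T_m$ of the first kind and invokes Szwarc's general theorem on connection coefficients \cite[Theorem 1]{szwarc1992connection} to get nonnegativity of the coefficients, after which $|T_m(x)|\le T_m(1)$ finishes the argument. You instead expand in the continuous $q$-Hermite polynomials $H_m$ and compute the connection coefficients explicitly by dividing the generating function \eqref{Q-gen} by its $a=b=0$ specialization, obtaining $c_{n,m}=\binom{n}{m}_q e_{n-m}$ with $e_j$ read off from $(at,bt;q)_\infty$; the hypotheses $a+b\le 0$, $ab\ge 0$ enter only through the factorization $\prod_j\bigl(1-(a+b)q^jt+ab\,q^{2j}t^2\bigr)$ into quadratics with nonnegative coefficients, which is a clean and fully self-contained way to see $e_j\ge 0$ (the passage from partial products to the entire function is harmless since Taylor coefficients of the partial products converge). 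What each approach buys: the paper's argument is shorter on the page but outsources the real work to Szwarc's theorem, whose hypotheses must be verified against the recurrence \eqref{Al-Salam} --- in particular the condition $ab\le 1$ is used there to make the coefficient $(1-q^n)(1-abq^{n-1})/2$ nonnegative; your argument is elementary and explicit, uses only the triangle inequality for $H_m$ (the $q$-Hermite case of the same maximum principle, \cite[Theorem 13.1.2]{ismail09classical}), and in fact never uses $ab\le 1$, so it proves a marginally more general statement. Your opening observation that the naive triangle inequality applied to \eqref{Berg-Ismail} fails for genuinely complex conjugate $a,b$ is also accurate and explains why some decomposition with nonnegative coefficients is needed.
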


\begin{remark}\label{R:Ismail}
   A similar bound seems to be implied by \cite[(15.1.4)]{ismail09classical} without the explicit assumption that $a+b\leq 0$.
We %
note that \eqref{Al_Salam_Chihara_bound*} does not hold for arbitrary $a,b$: polynomial
$Q_2(x;q,q|q) =3 q^3+q^2+q-1-4 (q+1) q x+4 x^2$ has maximum at $x=-1$, which exceeds the value at $x=1$ by $8q(1+q)$.
\end{remark}
\begin{corollary}
Let $q\in [0,1)$,
$r\in[0,1]$, $\alpha\in [-\pi/2,\pi/2]$. Denote $a=-re^{\i \alpha}$ and $b=-re^{-\i \alpha}$. Then for all $x\in [-1,1]$ bound \eqref{Al_Salam_Chihara_bound*} holds.
\end{corollary}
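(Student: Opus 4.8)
The plan is simply to verify that the chosen parameters $a=-re^{\i\alpha}$ and $b=-re^{-\i\alpha}$ fall under the hypotheses of Proposition \ref{L4.5+}, after which the bound \eqref{Al_Salam_Chihara_bound*} is immediate. Since $q\in[0,1)$ is already assumed, the three remaining conditions to check are that $a,b$ form a real or complex conjugate pair, that $0\le ab\le 1$, and that $a+b\le 0$.

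First I would observe that, because $r$ and $\alpha$ are real, we have $b=\overline{a}$, so that $a,b$ is a complex conjugate pair; the degenerate case $\alpha=0$ produces the real pair $a=b=-r$, which is equally admissible. Next I would compute the product
\[
ab=(-re^{\i\alpha})(-re^{-\i\alpha})=r^2,
\]
so that $0\le ab=r^2\le 1$ follows directly from $r\in[0,1]$, matching the hypothesis $0\le ab\le 1$ (and in particular the boundary value $r=1$ is permitted). Finally I would compute the sum
\[
a+b=-r\left(e^{\i\alpha}+e^{-\i\alpha}\right)=-2r\cos\alpha,
\]
and note that the restriction $\alpha\in[-\pi/2,\pi/2]$ forces $\cos\alpha\ge 0$, which together with $r\ge 0$ yields $a+b=-2r\cos\alpha\le 0$, the last required hypothesis.

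With all conditions of Proposition \ref{L4.5+} verified, the bound $|Q_n(x;a,b|q)|\le Q_n(1;a,b|q)$ holds for these $a,b$ and all $x\in[-1,1]$, proving the corollary. There is essentially no obstacle: the statement is a direct specialization, and the only point demanding any care is the sign bookkeeping in $a+b=-2r\cos\alpha$, where the nonnegativity of $\cos\alpha$ is exactly what the restriction of $\alpha$ to $[-\pi/2,\pi/2]$ guarantees.
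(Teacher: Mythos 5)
Your proposal is correct and follows essentially the same route as the paper: both simply verify the hypotheses of Proposition \ref{L4.5+} by computing $ab=r^2\in[0,1]$ and $a+b=-2r\cos\alpha\le 0$ (the paper writes $-r\cos\alpha$, an inconsequential slip of a factor of $2$; your version is the accurate one). Nothing further is needed.
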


\begin{proof}
  Indeed, $a+b=-r \cos \alpha\leq 0$ and $ab=r^2\in[0,1]$.
\end{proof}
\begin{proof}[Proof of Proposition \ref{L4.5+}] We will write $Q_n$ as a linear combination
\begin{equation*}\label{bound_proof1*}
 Q_n(x;a,b|q) =
\sum\limits_{m=0}^n a(n,m) T_m(x),
\end{equation*}
of the Chebyshev polynomials  $T_m(\cos \theta)=\cos (m \theta)$   of the first kind, $m=0,1,\dots$.
We are going to show that $a(n,m)\geq 0$ for all  $0\leq m\leq n$. Since for
$x=\cos\theta \in [-1,1]$ we have $|T_m(x)|\leq 1= T_m(1)$, this will imply the bound \eqref{Al_Salam_Chihara_bound*}.

To use \cite[Theorem 1]{szwarc1992connection},
we rewrite the recurrence for the Al-Salam--Chihara polynomials as
$$x Q_n(x)=\gamma_n Q_{n+1}(x)+\beta_n Q_n(x)+\alpha_n Q_{n-1}(x),$$
where
$$ \gamma_n=1/2,\quad \beta_n=\frac{a+b}{2}q^n,\quad \alpha_n=\frac{(1-q^n)(1-ab q^{n-1})}{2}.$$
The Chebyshev polynomials satisfy the recurrence
$$x T_n(x)=\gamma_n' T_{n+1}(x)+\beta_n' T_n(x)+\alpha_n' T_{n-1}(x),$$
where
$$  \gamma_n'=\frac{1}{2}(1+\delta_{n=0}) ,\quad \beta_n'=0,\quad \alpha_n'=\frac{1}{2}\delta_{n>0}.$$
It is then clear that the assumptions of \cite[Theorem 1]{szwarc1992connection} are satisfied.
Thus $a(m,n)\geq 0$ for all $0\leq m\leq n$, $n=0,1,\dots$, ending the proof.
\end{proof}

\subsection{Pointwise asymptotics near the end of the interval of orthogonality}
We will need pointwise asymptotics for the Al-Salam--Chihara polynomials at the right endpoint of the orthogonality interval. Such pointwise limits  have been studied for  orthogonal polynomials  both on finite intervals and on unbounded  domains \cite{aptekarev1993asymptotics,baik2003uniform,deift2001riemann,deift1999strong,ismail2013plancherel,ismail1986asymptotics,
ismail2005asymptotics,Ismail-Wilson-82,ismail2022orthogonal,kuijlaars2004riemann,lubinsky2020pointwise,nevai1984asymptotics}.
In particular,  \cite[Theorem 1]{aptekarev1993asymptotics} gives general conditions on the orthogonality measure and on the coefficients of the three term recurrence relation for the orthonormal polynomials $\{q_n(x)\}$, which imply
\begin{equation}
  \label{Apteka}
  \lim_{n\to \infty} \frac{1}{n^{\alpha+1/2}}\,q_n\left(1-\tfrac{u^2}{2n^2}\right)=\frac{J_\alpha(u)}{u^{\alpha}}
\end{equation}
uniformly with respect to $u$ on compact subsets of complex plane, where $J_\alpha$ is the Bessel function, $\alpha>-1$.

Lubinsky \cite[Theorem 1.1]{lubinsky2020pointwise}  proves a version of
\eqref{Apteka}, which can be re-written as
\begin{equation}
  \label{Lubinsky}
  \lim_{n\to \infty} \frac{q_n\left(1-\frac{u^2}{2n^2}\right)}{q_n(1)}=2^\alpha\Gamma(\alpha+1)\frac{J_\alpha(u)}{u^{\alpha}}
\end{equation}
uniformly on compact subsets of complex plane.  For our density function \eqref{Q-w}, the assumptions in \cite[Theorem 1.1]{lubinsky2020pointwise}   are satisfied with $\alpha=1/2$.
Indeed,
one can re-write \eqref{Q-w} as
\begin{equation}\label{Q-w+}
g(x) =  \frac{2\sqrt{1-x^2}(q,ab;q)_\infty\left|(q e^{2 \i \theta};q)_\infty\right|^2}{\pi\left|(a e^{i\theta},be^{\i \theta};q)_\infty\right|^2}, \; x=\cos \theta.
\end{equation}
  Note that for $\alpha=1/2$, the right-hand side of \eqref{Lubinsky} simplifies to  $u^{-1}\sin u$ and that the limit under normalization \eqref{Apteka} is discussed in
\cite[Theorem 1.3]{lubinsky2020pointwise}.

Pointwise asymptotics for the Al-Salam--Chihara polynomials  at the endpoints of the interval of orthogonality follows from the above results. However,
  our proof is quite direct, and the approach extends to the case of varying orthogonality measures, so we include it for completeness.
Our asymptotic result for the   Al-Salam-Chihara polynomials with varying orthogonality measure as  $q\nearrow 1$ seems to be new, in particular,   it is not covered by the results in \cite{levin2020local}.

The following asymptotics holds for fixed $q<1$.
\begin{theorem}\label{Thm:3.1}
Fix $a,b$ real or complex conjugate with $|a|,|b|<1$, $q\in[0,1)$  and $u>0$. Let $\{Q_n\}$ be the Al-Salam--Chihara polynomials \eqref{Al-Salam}. If  $u_M\to u$,  %
then
\begin{equation}
    \label{P-R-limQ+}
    \lim_{M\to \infty} \tfrac1{M}\,Q_{M}\pp{1-\tfrac{u_M^2}{2M^2};a,b\middle\vert q} = \frac{\sin u }{u} \;
 \frac{(a,b;q)_\infty}{(q;q)_\infty }.
\end{equation}
\end{theorem}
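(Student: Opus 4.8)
The plan is to read off the asymptotics directly from the explicit expansion \eqref{Berg-Ismail}. Set $x=\cos\theta_M$ with $\theta_M\in(0,\pi)$ determined by $\cos\theta_M=1-u_M^2/(2M^2)$, i.e. $2\sin^2(\theta_M/2)=u_M^2/(2M^2)$, so that $\sin(\theta_M/2)=u_M/(2M)$; hence $\theta_M\to0$, and $M\sin\theta_M=u_M\cos(\theta_M/2)\to u$, so that $M\theta_M\to u$ and $(M+1)\theta_M\to u$. Taking $n=M$ and $\theta=\theta_M$ in \eqref{Berg-Ismail} exhibits $Q_M(\cos\theta_M)/(q;q)_M$ as a convolution
\[
\frac{Q_M(\cos\theta_M)}{(q;q)_M}=\sum_{k=0}^M r_k(\theta)\,s_{M-k}(\theta)\,e^{\i(M-2k)\theta},\qquad
r_k(\theta)=\frac{(be^{\i\theta};q)_k}{(q;q)_k},\quad s_j(\theta)=\frac{(ae^{-\i\theta};q)_j}{(q;q)_j}.
\]

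First I would note that a term-by-term limit is impossible: for fixed $k$ the summand converges to a nonzero constant, while there are $\Theta(M)$ terms, so the correct $O(M)$ size of the sum arises entirely from phase cancellation. To capture it, replace the amplitudes by their limits $r_\infty(\theta)=(be^{\i\theta};q)_\infty/(q;q)_\infty$ and $s_\infty(\theta)=(ae^{-\i\theta};q)_\infty/(q;q)_\infty$. The analytic input, which I would record using the estimates of Section \ref{Sec:SF}, is the uniform geometric bound $|r_k(\theta)-r_\infty(\theta)|\le Cq^k$ together with $\sup_k|r_k(\theta)|\le C$, valid for $|\theta|\le\theta_0$ with $C$ depending only on $q,|a|,|b|$; this follows from the telescoping identity $r_{k+1}(\theta)-r_k(\theta)=r_k(\theta)\,q^k(q-be^{\i\theta})/(1-q^{k+1})$ and the convergent bound $|r_k(\theta)|\le(-|b|;q)_\infty/(q;q)_\infty$, and likewise for $s_j$. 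Writing $r_k=r_\infty+\rho_k$ and $s_{M-k}=s_\infty+\widetilde\sigma_{M-k}$ with $|\rho_k|\le Cq^k$, $|\widetilde\sigma_{M-k}|\le Cq^{M-k}$ and expanding the product, the two cross terms are each bounded by $C'\sum_{j\ge0}q^j=O(1)$ and the remainder by $C^2(M+1)q^M=o(1)$, so all are $o(M)$. The leading contribution is $r_\infty(\theta)s_\infty(\theta)\sum_{k=0}^M e^{\i(M-2k)\theta}$, where the oscillatory sum is a Dirichlet kernel,
\[
\sum_{k=0}^M e^{\i(M-2k)\theta}=\frac{\sin\!\big((M+1)\theta\big)}{\sin\theta}.
\]
Dividing by $M$ and using $(M+1)\theta_M\to u$ and $M\sin\theta_M\to u$ gives $\tfrac1M\sum_{k=0}^M e^{\i(M-2k)\theta_M}\to \sin u/u$. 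Since $r_\infty(\theta_M)\to(b;q)_\infty/(q;q)_\infty$ and $s_\infty(\theta_M)\to(a;q)_\infty/(q;q)_\infty$ by continuity, and $(q;q)_M\to(q;q)_\infty$, multiplying back through yields exactly \eqref{P-R-limQ+}.

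The main obstacle is the one just flagged: the sum resists any direct dominated-convergence argument because its value is produced by cancellation among $O(M)$ summands of size $O(1)$, so the proof must first extract the constant leading amplitude $r_\infty s_\infty$ and reduce the phases to a Dirichlet kernel. The only genuinely delicate point in executing this is that $\theta_M\to0$ and $k\to\infty$ must be handled simultaneously, so the control of $r_k(\theta)-r_\infty(\theta)$ and $s_j(\theta)-s_\infty(\theta)$ must be uniform on a neighbourhood of $\theta=0$ rather than merely pointwise; once these uniform $q$-Pochhammer estimates are established, the remaining steps are routine bookkeeping. I expect this same scheme to transfer to the varying-measure regime $q\nearrow1$ treated in Theorem \ref{Thm:3.2}, which is why I would present it in this self-contained form.
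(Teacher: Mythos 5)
Your proof is correct, but it takes a genuinely different route from the paper's. The paper starts from the generating function \eqref{Q-gen}, writes $Q_M(\cos\theta;a,b|q)/(q;q)_M$ as a Cauchy contour integral over $|t|=r<1$, pushes the contour out to $|t|=R\in(1,q^{-1})$, and extracts the two residues at $t=e^{\pm\i\theta}$; after dividing by $M$ the residues produce the factor $\frac{\sin u}{u}\,\frac{(a,b;q)_\infty}{(q;q)_\infty^2}$, while the outer contour contributes $O(R^{-M})$. You instead work from the explicit convolution formula \eqref{Berg-Ismail}, isolate the constant limiting amplitude $r_\infty(\theta)s_\infty(\theta)$ via a geometric-decay estimate on $r_k-r_\infty$ and $s_j-s_\infty$ that is uniform in $\theta$ near $0$, and sum the remaining phases into the Dirichlet kernel $\sin((M+1)\theta)/\sin\theta$. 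Both arguments are sound: your error bookkeeping (cross terms $O(1)$, remainder $(M+1)q^M$) is right, and the uniformity of $|r_k(\theta)-r_\infty(\theta)|\le Cq^k$ is indeed the one delicate point, which your telescoping identity settles. In fact your method is the natural extension to $u>0$ of the paper's own treatment of the $u=0$ case \eqref{1*} in Remark \ref{R:Thm3.1}, which rests on exactly the elementary convolution limit $n^{-1}\sum_k\alpha_k\beta_{n-k}\to\alpha\beta$; what your version makes explicit is that for $u>0$ the $O(M)$ size of the sum comes from phase cancellation rather than from amplitudes. One caveat on your closing remark: the expectation that this scheme transfers to the regime of Theorem \ref{Thm:3.2} is optimistic, since there $q=q_M\to 1$, the decay rate $q^k$ degenerates, the degree is $m\sim M\log M$, and the amplitudes no longer stabilize; the residue/contour approach adapts to that case by turning the integral into a Mellin--Barnes representation of $K_{\i u}$, which is the main structural advantage the paper's proof buys.
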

\begin{remark}\label{R:Thm3.1}
   We will also need the case $u=0$, with
\begin{equation}\label{1*}\lim_{n\to\infty}
\frac{1}{n}Q_n(1;a,b|q)
=\frac{(a,b;q)_\infty}{(q;q)_\infty}.
\end{equation}
 We also note the following extension of bound \eqref{Al_Salam_Chihara_bound*}: there is a constant $C=C(a,b,q)$ such that for all $n=0,1\dots$  and $x\in[-1,1] $ we have
\begin{equation}\label{1*b}
   |Q_n(x;a,b|q)|\leq C  (n+1).
\end{equation}
\end{remark}

\begin{proof}[Proof of Theorem \ref{Thm:3.1}] %
The use of $M$ in \eqref{P-R-limQ+}  is  solely   to avoid notation clash when we use it in the proof of Theorem \ref{Thm-loc-lim}, so for the proof, we replace $M$ by $n$.

The series \eqref{Q-gen} converges for all $|t|<1$, thus we can write
\begin{equation}\label{AA0}
\frac{Q_n(x;a,b|q)}{(q;q)_n}=\frac{1}{2\pi \i} \oint_{|t|=r}
t^{-n-1} \frac{(at,bt;q)_\infty}{(e^{\i\theta} t,e^{-\i\theta} t;q)_\infty} d t,
\end{equation}
where $r\in (0,1)$ and the integration is done in the counterclockwise  direction. We fix now $x=\cos \theta \in (-1,1)$,
so
that $\theta \in (0,\pi)$ and we  note that the integrand
\begin{equation*}\label{A0}
 F(t):=t^{-n-1} \frac{(at,bt;q)_\infty}{(e^{\i\theta} t,e^{-\i\theta}t;q)_\infty}=
t^{-n-1} \frac{(at,bt;q)_\infty}{(e^{\i\theta} tq,e^{-\i\theta} tq;q)_\infty} \times \frac{1}{(1-e^{\i \theta} t)(1-e^{-\i \theta}t)}
\end{equation*}
has two simple poles at $t=e^{\pm \i \theta}$ inside the disk $|t|<q^{-1}$. By the Cauchy Residue Theorem we can write
\begin{equation}\label{Q_n_residues}
\frac{Q_n(x;a,b|q)}{(q;q)_n}=-{\textnormal{Res}}(F;e^{\i \theta})-{\textnormal{Res}}(F;e^{-\i \theta})
+\frac{1}{2\pi \i} \oint_{|t|=R}
t^{-n-1} \frac{(at,bt;q)_\infty}{(e^{\i\theta} t,e^{-\i\theta} t;q)_\infty} d t,
\end{equation}
where $R$ is %
in the interval $(1,q^{-1})$.

Recalling that $\left|1-|z|\right|\le |1-z|\le 1+|z|$ and   $|t|=R>1$ we get
$$
\left|\frac{(at,bt;q)_\infty}{(e^{\i\theta} t,e^{-\i\theta} t;q)_\infty}\right|\le \frac{(-|a|R,-|b|R;q)_\infty}{(R;\,q)^2_\infty}.
$$
Therefore, the third %
term  in \eqref{Q_n_residues} is bounded by
$$\frac{(-|a|R,-|b|R;q)_\infty}{(R;\,q)^2_\infty\,R^n}\stackrel{n\to\infty}{\longrightarrow}0.$$

Since under our assumptions  $\theta=\theta_n$
  with
 $\cos \theta_n=1-u_n^2/(2n^2)$ %
is such that
$n\theta_n\to u$, we see that
$$\frac{1}{n}{\textnormal{Res}}(F;e^{\pm \i \theta_n})=\frac{e^{\mp\i (n+1)\theta_n }}{n(e^{\pm\i \theta_n}-e^{\mp\i \theta_n})}
    \frac{(ae^{\pm\i \theta_n},be^{\pm \i \theta_n};q)_\infty}{(e^{\pm 2\i\theta_n} q;q)_\infty (q;q)_{\infty}}\stackrel{n\to\infty}{\longrightarrow} \frac{e^{\mp iu}(a,b;q)_{\infty}}{\pm 2\i u\,(q;\,q)_\infty^2}.
$$
Consequently, returning to \eqref{Q_n_residues} we get
$$
\tfrac1{n}\,Q_{n}\pp{1-\frac{u_n^2}{2n^2};a,b\middle|q}\stackrel{n\to\infty}{\longrightarrow} \frac{(e^{iu}-e^{-iu})(a,b;q)_{\infty}}{2\i u(q;q)_\infty}.
$$
\end{proof}
\begin{proof}[Proof of Remark \ref{R:Thm3.1}]
The limit \eqref{1*}  follows from \eqref{Qn-expl} and an elementary lemma that if $\alpha_n\to\alpha$, $\beta_n\to\beta$ then $n^{-1}\sum_{k=1}^n \alpha_n\beta_{n-k}\to \alpha\beta$.
(To verify the latter, write
$\tfrac1n\sum_{k=1}^n \alpha_k \beta_{n-k}=\tfrac1n\sum_{k=1}^n (\alpha_k-\alpha) \beta_{n-k}+\frac{\alpha}{n}\sum_{k=1}^n \beta_{k}\to 0+\alpha\beta$.)
  To prove  \eqref{1*b} we use \eqref{Berg-Ismail}, which by triangle inequality gives a bound with explicit constant:
\begin{equation*}
 \sup_{x\in[-1,1]} | Q_n(x;a,b|q)|%
 \leq \frac{ (-|a|,-|b|;q)_n}{(q;q)_n} (n+1).
\end{equation*}
\end{proof}

The following asymptotics holds  for  $q\nearrow 1$.
  The statement is somewhat cumbersome as we will need to apply this result to $a,b$ that vary with $q$.
\begin{theorem}\label{Thm:3.2}
  If $\wt a, \wt b$ are real in $[0,\infty)$, or a complex conjugate pair with $\re(\wt a)\geq 0$, then
with
$m=\floor {M x}+\floor{M\log\left(M\sqrt{(1+\wt a)(1+\wt b)}\right)}$,
 $q_M=e^{-2/M}$ and $a_M=-q_M \wt a$, $b_M:=-q_M\wt b$ we have
\begin{equation}\label{Q2Kab}
  \lim_{M\to \infty} \frac{(q_M;q_M)_\infty ^2}{M(  a_M,  b_M;q_M)_\infty} \;\frac{ Q_m(\cos(u/M);a_M,b_M\vert  q_M)}{ (q_M;q_M)_m}
  = K_{\i |u|} (e^{-x}), \; u\in\RR.
\end{equation}
\end{theorem}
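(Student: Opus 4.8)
My plan is to prove Theorem~\ref{Thm:3.2} by the same contour-integral method used for Theorem~\ref{Thm:3.1}, but now tracking the delicate interplay between the growth of the index $m$, the scaling $q_M=e^{-2/M}$, and the behaviour of the $q$-Pochhammer symbols as $q\nearrow 1$. The starting point is the generating-function representation
\begin{equation*}
\frac{Q_m(\cos(u/M);a_M,b_M|q_M)}{(q_M;q_M)_m}=\frac{1}{2\pi\i}\oint_{|t|=r} t^{-m-1}\frac{(a_M t,b_M t;q_M)_\infty}{(e^{\i u/M}t,e^{-\i u/M}t;q_M)_\infty}\,dt,
\end{equation*}
valid for $r\in(0,1)$, exactly as in \eqref{AA0}. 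The integrand again has simple poles at $t=e^{\pm\i u/M}$ inside the disk $|t|<q_M^{-1}$, so by the residue theorem I would write the left-hand side as minus the sum of the two residues plus a remainder contour integral over $|t|=R$ with $R\in(1,q_M^{-1})$. The heart of the matter is that now everything depends on $M$: the poles coalesce toward $t=1$, the number of factors in the $q$-Pochhammer symbols grows, and the prefactor $(q_M;q_M)_\infty^2/[M(a_M,b_M;q_M)_\infty]$ must be shown to renormalize the residues into the Macdonald function $K_{\i|u|}(e^{-x})$.

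The key computation is the asymptotics of a single residue. Writing $\theta_M=u/M$, I have
\begin{equation*}
\textnormal{Res}(F;e^{\i\theta_M})=\frac{e^{-\i(m+1)\theta_M}}{e^{\i\theta_M}-e^{-\i\theta_M}}\cdot\frac{(a_M e^{\i\theta_M},b_M e^{\i\theta_M};q_M)_\infty}{(e^{2\i\theta_M}q_M;q_M)_\infty(q_M;q_M)_\infty}.
\end{equation*}
After multiplying by the stated prefactor, the ratio of Pochhammer symbols $(a_Me^{\i\theta_M},b_Me^{\i\theta_M};q_M)_\infty/(a_M,b_M;q_M)_\infty$ and the factor $(q_M;q_M)_\infty/(e^{2\i\theta_M}q_M;q_M)_\infty$ must be evaluated in the limit $M\to\infty$ using the $q\nearrow 1$ asymptotics of the $q$-Pochhammer and $q$-Gamma functions collected in Section~\ref{Sec:SF}. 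I expect these to produce Gamma-function factors of the form $\Gamma(\i u)$ and $\Gamma(1+\wt a)$, $\Gamma(1+\wt b)$ (via the shift $e^{\i\theta_M}\approx 1+\i u/M$ inside the infinite products). The exponential factor $e^{-\i(m+1)\theta_M}$ is where the precise choice of $m$ enters: with $m=\lfloor Mx\rfloor+\lfloor M\log(M\sqrt{(1+\wt a)(1+\wt b)})\rfloor$ and $\theta_M=u/M$, one gets $m\theta_M\to ux+u\log\bigl(M\sqrt{(1+\wt a)(1+\wt b)}\bigr)$, and the growing logarithmic term combines with a compensating power of $M$ from the asymptotics of the Pochhammer ratios to yield exactly $(e^{-x})^{\pm\i u}$ together with the $M$-dependent factors that cancel against the prefactor. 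Summing the two conjugate residues and recognizing the Mellin--Barnes-type integral representation of $K_{\i|u|}$, or else matching against the closed form, should give the claimed limit.

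The main obstacle, and the step requiring the most care, is controlling the remainder contour integral over $|t|=R$ and justifying that it is negligible after multiplication by the prefactor. Unlike in Theorem~\ref{Thm:3.1}, the naive bound $R^{-n}$ does not immediately suffice, because $R$ can be taken only in $(1,q_M^{-1})=(1,e^{2/M})$, an interval shrinking to $\{1\}$, so $R^{-m}$ no longer tends to zero on its own; one must optimize the choice of $R=R_M$ and simultaneously bound the Pochhammer factors $(a_Mt,b_Mt;q_M)_\infty/(e^{\i\theta_M}t,e^{-\i\theta_M}t;q_M)_\infty$ uniformly on the contour as $M\to\infty$. I would use the logarithmic estimates for $\log(z;q)_\infty$ from Section~\ref{Sec:SF} to show that, with an appropriate saddle-point choice of $R_M$ (near the saddle of $t\mapsto t^{-m}(R;q_M)_\infty^{-2}$), the remainder is of strictly smaller order than the renormalized residues. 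A secondary technical point is justifying the convergence of the Pochhammer ratios uniformly enough to pass to the limit; here I would lean on the bounds and limits for the $q$-Gamma function established later in the paper, and on the hypothesis $\re(\wt a)\geq 0$ (together with $\wt a,\wt b$ forming a conjugate pair or lying in $[0,\infty)$) to guarantee that the limiting Gamma factors are finite and nonzero.
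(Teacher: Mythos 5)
There is a genuine gap, and it sits exactly at the point you flag as "the main obstacle": the remainder integral over $|t|=R$ is \emph{not} of smaller order than the two residues --- it carries a nonzero part of the limit, so no choice of $R_M\in(1,q_M^{-1})$ can make it negligible. To see why, note that $K_{\i u}(e^{-x})$ for fixed $x$ is a genuinely transcendental object (via $K_{\i u}=\tfrac{\pi}{2}\,\frac{I_{-\i u}-I_{\i u}}{\i\sinh(\pi u)}$ it is an infinite series in powers of $e^{-2x}$), whereas the sum of the two residues at $t=e^{\pm\i u/M}$ produces, after the renormalization you describe, only the two leading ($k=0$) terms of that series, roughly $\tfrac{(e^{-x}/2)^{\i u}}{\Gamma(1+\i u)}+\tfrac{(e^{-x}/2)^{-\i u}}{\Gamma(1-\i u)}$ up to constants. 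The missing terms come from the further rings of poles at $t=e^{\pm\i u/M}q_M^{-k}$, $k\geq 1$: a short bookkeeping with $q_M^{km}\sim M^{-2k}e^{-2kx}(\cdots)^{-k}$ against the $k$ factors of size $O(1/M)$ appearing in $(q_M;q_M)_k$ and in the nearly-vanishing denominator factors shows that each ring contributes at order $O(1)$ in $M$ (roughly the $k$-th term of the Bessel series, with a $1/k!$ but no extra decay in $M$). Since the contour $|t|=R$ with $R\in(1,q_M^{-1})$ separates only the first ring from all the others, the leftover integral must retain all of these $O(1)$ contributions; your proposed saddle-point bound would therefore have to fail. (A corrected residue route would push the contour past $K$ rings, sum $2(K+1)$ residues, and let $K\to\infty$ after $M\to\infty$ --- possible in principle but substantially more work, and not what you propose.) A secondary inaccuracy: a finite sum of residues is a closed-form expression, so "recognizing the Mellin--Barnes integral" from it is not a meaningful step.

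The paper avoids all of this by \emph{not} moving the contour at all. It takes $r=q_M$ in \eqref{AA0}, so the circle stays inside the unit disk and no residues arise; the substitution $t=q_M^z$ turns the circle into the vertical segment $\re z=1$, $|\im z|\le M\pi/2$, and via \eqref{q-Gamma} the integrand becomes a product of two $q$-Gamma factors $\Gamma_{q_M}(z\pm\i u/2)$ times a Pochhammer ratio and the exponential $e^{2mz/M}$, as in \eqref{eq:133+}. The choice of $m$ then cancels the divergent $\log M$ in the exponent against the $M^{2-2z}(1-q_M)^{2-2z}$ normalization, and dominated convergence (using the theta-function bounds on $\Gamma_q$ on vertical lines from Section \ref{Sec:SF} and the elementary bound \eqref{*dc-2}) lets one pass to the limit inside the integral, landing directly on the Mellin--Barnes representation of $K_{\i u}(e^{-x})$. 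In other words, the Bessel function emerges as a limiting \emph{contour integral}, not as a limiting sum of residues; this is the structural idea your proposal is missing.
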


\begin{proof}%
 The proof relies on two technical estimates that we will prove in Section \ref{Sec:SF}.
In the proof, for simplicity of notation we suppress the dependence of $q$ on $M$.
Fix $u\in {\mathbb R}$ and let $a=-q\wt a$, $b=-q\wt b$, where  $q=e^{-2/M}$.

In view of \eqref{AA0} with $r=q$, we can write
\begin{equation}\label{eq:131}
\frac{Q_m(\cos(u/M);a,b\vert  q)}{(q;q)_m} =
\frac{1}{2\pi \i} \oint_{|t|=q}
\frac{(at,bt;q)_{\infty}}{(q^{\i u/2}t,q^{-\i u/2}t;q)_{\infty}} t^{-m-1} d t,
\end{equation}
where we are integrating counterclockwise along a circle of radius $q<1$.
 Next we change the variable of integration $t=q^{z}$, so that
  $dt = \ln(q) q^z d z=-(2/M) t d z$,
and obtain
\begin{equation}\label{eq:133}
\frac{Q_m(\cos(u/M);a,b\vert  q)}{(q;q)_m} =
\frac{1}{M\pi \i} \int_{1-M\pi \i/2 }^{1+M\pi \i/2 }
\frac{(a q^{z},b q^{z};q)_{\infty}}{(q^{\i u/2+z},q^{-\i u/2+z};q)_{\infty}} e^{2mz/M} d z.
\end{equation}
From \eqref{eq:133}, in view of \eqref{q-Gamma},   we get
\begin{multline}\label{eq:133+}
\frac{(q;q)_\infty ^2}{M(a,b;q)_\infty} \;\frac{ Q_m(\cos(u/M);a,b\vert  q)}{ (q;q)_m}
\\
= \frac{1}{\pi \i} \int_{1-M\pi \i/2 }^{1+M\pi \i/2 }
\tfrac{\Gamma_q(\i u/2+z)\Gamma_q(-\i u/2+z)}{M^{2}(1-q)^{2-2z}}\; \tfrac{(-\wt a q^{1+z},-\wt b q^{1+z};q)_\infty } {(-\wt a q,-\wt b q;q)_\infty }\;e^{2 m z/M} d z
= \frac{1}{\pi \i} \int_{1-\i \infty }^{1+\i \infty } f_M(z)dz,
\end{multline}
where %
\begin{multline}\label{f_M}
  f_M(z)   = {\mathbf 1}_{\{|\im (z)|\leq M \pi/2\}}
  \frac{\Gamma_q(\i u/2+z)\Gamma_q(-\i u/2+z)(-\wt a q^{1+z},-\wt b q^{1+z};q)_\infty}{M^{2-2z}(1-q)^{2-2z}(-\wt a q,-\wt b q;q)_\infty}\;e^{2 m z/M-2z\log M}.
\end{multline}
We now write $z=1+\i s$. In order to use the dominated convergence theorem, we
verify that for every  $x,u\in\RR$
(recall that $m$ depends on $x$) there are constants $A,B,M_*$ such that for all $M>M_*$ and all real $s$ we have
 \begin{equation}
   \label{|fM|}
   |f_M(1+\i s)|\leq A e^{-B |s|}.
 \end{equation}
To verify \eqref{|fM|} we write
$|f_M(1+\i s)| = f\topp 1 \cdot f\topp 2 \cdot f\topp 3$,
where
\begin{align*}
 f\topp 1 &=  {\mathbf 1}_{\{|s|\leq M \pi/2\}}|\Gamma_q(1+\i s+\i u/2)\Gamma_q(1+\i s-\i u/2)|, \\
f\topp 2&= \frac{|(-\wt a q^{2+\i s},-\wt b q^{2+\i s};q)_\infty|}{|(-\wt a q,-\wt b q;q)_\infty|} ,\\
f\topp 3&=\left\vert\frac{e^{2(1+\i s) m /M}}{M^{2}(1-q)^{2-2(1+\i s)}}\right\vert
=e^{2 (m/M-\log M)}.
\end{align*}
From \eqref{GqGq} we see that there exist constants $A,B,M_*>0$ such that $f\topp 1\leq A e^{-B |s|}$ for all real $s$. From \eqref{*dc-2} we see that $f^{(2)}\leq 1$.
 With $$\frac{m}{M}=\frac{\floor {M x}+\floor{M\log\left(M\sqrt{(1+\wt a)(1+\wt b)}\right)}}{M}\leq
 x +  \log\,\sqrt{(1+\wt a)(1+\wt b)}+\log\,M,$$ we see that
   $$ f\topp 3\leq e^{2\pp{x+\log \sqrt{(1+\wt a)(1+\wt b)}}}.
   $$
Thus \eqref{|fM|} holds.

We can now apply the dominated convergence theorem and pass to the limit inside the integral \eqref{eq:133+}.
We fix $z=1+\i s$ and compute $\lim_{M\to\infty}f_M(z)$ by computing the limit for %
the factors in \eqref{f_M}.

Clearly, ${\mathbf 1}_{\{|\im (z)|<M \pi/2\}} \to 1$ as $M\to\infty$ and by \eqref{qG2G} we get
$$\lim_{q\to 1^-}\Gamma_{q}(\i u/2+z)\Gamma_{q}(-\i u/2+z)=\Gamma(\i u/2+z)\Gamma(-\i u/2+z).$$
Since $\lim_{M\to\infty}\,M(1-q_M)=2$, from \eqref{eq6} we get
$$
\lim_{M\to\infty}\frac{(-\wt a q_M^{1+z},-\wt b q_M^{1+z};q_M)_\infty } {
M^{2-2z}(1-q_M)^{2-2z}(-\wt a q_M,-\wt b q_M;q_M)_\infty }
=\tfrac14 \frac{2^{2z}} {\left((1+\wt a)(1+\wt b)\right)^{z} }.
$$
Finally,  since
$$
  \frac{m}{M}%
  =  x+ \log M+\log\sqrt{(1+\wt a)(1+\wt b)}+O(\tfrac{1}M),
$$
we get
$$
e^{2 m z/M -2z \log M} \sim  e^{2z x+2 z \log M+2z \log\sqrt{(1+\wt a)(1+\wt b)}-2z \log M}
=\left((1+\wt a)(1+\wt b)\right)^z e^{2z x}.
$$
Putting all the factors together,
$$
\lim_{M\to\infty} f_M(z)= \frac{1}{4}\Gamma(\i u/2+z)\Gamma(-\i u/2+z)  2^{2z}
 e^{2z x}.
$$
Thus by the dominated convergence theorem,
$$
\lim_{M\to\infty} \frac{1}{\pi \i} \int_{1-\i \infty }^{1+\i \infty } f_M(z)d z
=
       \frac{1}{4\pi \i} \int_{1-\i \infty }^{1+\i\infty }
 \Gamma(\i u/2+z)\Gamma(-\i u/2+z)  2^{2z}
 e^{2z x}dz.
$$
This completes the proof by the Mellin-Barnes type formula for the Bessel K function:
\begin{equation*}
  \label{BessK}
  K_{\i u}(e^{-x})=\frac{1}{4\pi \i} \int_{c-\i \infty}^{c+\i\infty} \Gamma(s+\i  {u}/{2}) \Gamma(s-\i  {u}/{2})  2^{2s}e^{2 x s} ds,
\end{equation*}
where $c>0$, $u,x\in\RR$.
(This is \cite[10.32.13]{NIST:DLMF} [10.32.13]  \url{https://dlmf.nist.gov/10.32.E13} with $\nu=\i u$, $z=e^{-x}>0$, and shifted variable of integration $t=s+\i u/2$.)
\end{proof}

 \section{Motzkin paths with $q$-number weights}\label{sec:SCI}
 We now return to the setting from Section \ref{sec:Intro}, providing additional details and further results for the case of edge weights
  $a_n=[n+2]_q$, $b_n=2\sigma [n+1]_q$, $c_n=[n]_q$.

We note that with the above choice of edge weights, the probability measure \eqref{Pr0}  is the same as \eqref{Pr}  with the boundary weights
$\alpha_n=\rho_0^n[n+1]_q$ and $\beta_n=\rho_1^n$.
Indeed, we note that for an upward step, we have
$a_{\gamma_{k-1}}=[\gamma_{k-1}+2]_q=[\gamma_k+1]_q$, for a horizontal step we have $b_{\gamma_{k-1}}=2\sigma [\gamma_{k-1}+1]_q=2\sigma[\gamma_k+1]_q$ and for a downward step we have $c_{\gamma_{k-1}}=[\gamma_{k-1}]_q=[\gamma_k+1]_q$. So
formula \eqref{w} gives $w(\vv \gamma)=(2\sigma)^{H(\vv \gamma)}\prod_{k=1}^L [\gamma_k+1]_q$ and thus
\eqref{Pr} gives
\eqref{Pr0}.

 Recurrence \eqref{3-step} becomes
 \begin{equation}
     \label{3-step-B}
     x p_n(x)=[n+2]_q p_{n+1}(x)+2\sigma [n+1]_q p_n(x)+[n]_q p_{n-1}(x).
 \end{equation}
 Then \eqref{L2} gives
 \begin{equation}
    \label{wt-pi-n}
    \|p_n\|_2^2=\prod_{k=1}^{n} \frac{[k]_q}{[k+1]_q} = \frac{1}{[n+1]_q} ,
 \mbox{ hence $\wt p_n(x)=[n+1]_q p_n(x)$.}
 \end{equation}
\begin{lemma}\label{P:supp}
  Suppose $0\leq \sigma\leq 1$. Polynomials $\{p_n\}$ are orthogonal with respect to the density
  supported on the interval  $ [A,B]$ with
   \begin{equation}\label{pAB}A=-2\frac{1-\sigma}{1-q}, \quad  B=2\frac{1+\sigma}{1-q} .\end{equation}
Moreover,
  \begin{equation}\label{pi+}
    \pi_n= p_n(B)= \frac{1}{ [n+1]_q} \sum_{k=0}^n \frac{(a;q)_k  }{
    (q;q)_k } \;\frac{ (b;q)_{n-k}}{
      (q;q)_{n-k} },
\end{equation}
where
\begin{equation}\label{a,b}
    a=-{q}\pp{\sigma+\i\sqrt{1-\sigma^2}}, \; b=-{q}\pp{\sigma-\i\sqrt{1-\sigma^2}}.
\end{equation}
\end{lemma}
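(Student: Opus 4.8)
The plan is to identify the polynomials $\{p_n\}$ from \eqref{3-step-B} with a rescaled and reparametrized family of Al-Salam--Chihara polynomials, after which both assertions follow from the facts collected in Section~\ref{sec:AlSal}. Using $[m]_q=(1-q^m)/(1-q)$ and multiplying \eqref{3-step-B} by $(1-q)$ turns the recurrence into
\begin{equation*}
(1-q)x\,p_n(x)=(1-q^{n+2})p_{n+1}(x)+2\sigma(1-q^{n+1})p_n(x)+(1-q^n)p_{n-1}(x).
\end{equation*}
Comparing this with the Al-Salam--Chihara recurrence \eqref{Al-Salam}, I would look for a substitution $p_n(x)=\kappa_n\,Q_n(y;a,b\vert q)$ combined with the affine change of variable $y=\tfrac{(1-q)x}{2}-\sigma$, chosen so that $2y=(1-q)x-2\sigma$.

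Substituting and dividing by $\kappa_n$, matching the coefficients of $Q_{n+1}$, $Q_n$, $Q_{n-1}$ produces three relations:
\begin{equation*}
(1-q^{n+2})\frac{\kappa_{n+1}}{\kappa_n}=1,\qquad 2\sigma(1-q^{n+1})=(a+b)q^n+2\sigma,\qquad (1-q^n)\frac{\kappa_{n-1}}{\kappa_n}=(1-q^n)(1-abq^{n-1}).
\end{equation*}
The middle relation forces $a+b=-2\sigma q$. The first gives $\kappa_{n+1}/\kappa_n=1/(1-q^{n+2})$, hence $\kappa_n=(1-q)/(q;q)_{n+1}$ with $\kappa_0=1$; the third then reads $\kappa_{n-1}/\kappa_n=1-abq^{n-1}$, and for this to be consistent with the first relation we need $ab=q^2$. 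The system $a+b=-2\sigma q$, $ab=q^2$ has exactly the roots \eqref{a,b}, and $|a|=|b|=q<1$, so the orthogonality theory for $\{Q_n\}$ recalled in Section~\ref{sec:AlSal} applies.

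Having established $p_n(x)=\frac{1-q}{(q;q)_{n+1}}\,Q_n\bigl(\tfrac{(1-q)x}{2}-\sigma;a,b\vert q\bigr)$, the orthogonality and support claims follow: the $Q_n$ are orthogonal on $y\in[-1,1]$ with density \eqref{Q-w}, the affine map sends $[-1,1]$ onto $[A,B]$ (one checks $x=A\mapsto y=-1$ and $x=B\mapsto y=1$), and pushing the orthogonality measure forward gives a density supported on $[A,B]$; the scalars $\kappa_n$ do not affect orthogonality. Finally $x=B$ corresponds to $y=1$, so by \eqref{Qn-expl}
\begin{equation*}
\pi_n=p_n(B)=\frac{1-q}{(q;q)_{n+1}}\,Q_n(1;a,b\vert q)=\frac{(1-q)(q;q)_n}{(q;q)_{n+1}}\sum_{k=0}^n\frac{(a;q)_k(b;q)_{n-k}}{(q;q)_k(q;q)_{n-k}},
\end{equation*}
and $\frac{(1-q)(q;q)_n}{(q;q)_{n+1}}=\frac{1-q}{1-q^{n+1}}=\frac{1}{[n+1]_q}$ yields \eqref{pi+}.

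The only delicate point is that the coefficient matching is over-determined: three relations constrain the two parameters $a,b$ together with the single ratio sequence $\kappa_n$. The crux is therefore to observe that the leading and trailing coefficient conditions are mutually consistent precisely when $ab=q^2$, which together with $a+b=-2\sigma q$ pins down \eqref{a,b}; verifying the endpoint correspondence and the $q$-Pochhammer bookkeeping in the last display is then routine.
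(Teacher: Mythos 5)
Your proposal is correct and follows essentially the same route as the paper: both identify $p_n$ with the Al-Salam--Chihara polynomials via the affine substitution $y=\tfrac{(1-q)x}{2}-\sigma$ and the normalization $\kappa_n=\tfrac{1-q}{(q;q)_{n+1}}=\tfrac{1}{[n+1]_q(q;q)_n}$, then read off the support from \eqref{Q-w} and the endpoint value from \eqref{Qn-expl}. The only cosmetic difference is that you derive the constraints $a+b=-2\sigma q$, $ab=q^2$ by coefficient matching, whereas the paper posits the identification and verifies it directly (recording the same two equations in \eqref{ab=q2}).
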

We note that $a,b$ are the solutions of the system of equations
\begin{equation}\label{ab=q2}
 a+b=-2\sigma q, \quad ab=q^2.
\end{equation}
\begin{proof}
To determine the orthogonality measure as well as the sequence $\pi_n=p_n(B)$,
 we establish a connection between polynomials $\{p_n\}$  and
monic Al-Salam-Chihara polynomials  $\{Q_n\}$,   defined by  the three term recurrence \eqref{Al-Salam}
with complex conjugate parameters \eqref{a,b}.

It is straightforward to verify that if $\{p_n\}$ satisfy recurrence \eqref{3-step-B} then polynomials
\begin{equation*} \label{Q2p} Q_n(x;a,b|q):=\pp{ 1-q}^{n} [n+1]_q! \; p_n\pp{ 2\;\frac{x+\sigma}{1-q}},\; n=0,1,\dots
\end{equation*}
satisfy recurrence \eqref{Al-Salam} with parameters \eqref{a,b}.
Conversely, with $y=2\frac{x+\sigma}{1-q} $,
we have
\begin{equation}
  \label{p2Q}
  p_n(y)=\frac{1}{[n+1]_q (q,q)_n} Q_n(x;a,b|q).
\end{equation}
Therefore, with $g$ defined by \eqref{Q-w}  (recall, that $g$ is supported on $[-1,1]$), polynomials $\{p_n\}$ are orthogonal with respect to the weight function (probability density function)
\begin{equation}\label{p-dens}
   \frac{1-q}{2} \;g\pp{\tfrac{(1-q)x}{2}-\sigma},
\end{equation}
which is supported on the interval $[A,B]$ given by \eqref{pAB}.  Combining  \eqref{Qn-expl} and \eqref{p2Q} we see that
\begin{equation}\label{pin}\pi_n=p_n(B)=\frac{Q_n(1;a,b|q)}{[n+1]_q(q;q)_n}\end{equation}
is given by \eqref{pi+}, see \eqref{Qn-expl}.
\end{proof}
With the above preparations, we can now prove Theorem \ref{Thm:1}.
\begin{proof}[Proof of Theorem \ref{Thm:1}]
We apply Theorem \ref{T-L}.  With $\alpha_n=\rho_0^n[n+1]_q$ and
$\beta_n=\rho_1^n$, conditions
 $(A_1)$ and $(A_2)$ from Assumption \ref{A-A1A2}   hold by  \eqref{p2Q}, \eqref{wt-pi-n}  and \eqref{1*b}.
The assumptions on the support of the orthogonality measure follow from Lemma \ref{P:supp}.
 The probabilities $\pi_n=\frac{s_n}{[n+1]_q}$ are given by \eqref{pi+}, so the transition matrix is just a recalculation of \eqref{trans-geX}.
Recalling that $\wt\pi_n=[n+1]_q\pi_n$, see \eqref{wt-pi-n}, the initial laws
\eqref{ini-0-laws} arise from \eqref{ini-law}.
Finally,   the formulas for the normalization constants are calculated from the relation \eqref{p2Q} by applying \eqref{Q-gen} with $t=\rho_j$, $j=1,2$, and $\theta=0$.
\end{proof}

 \subsection{Local limit theorems}
 We will  deduce Theorem \ref{Thm:2} and Theorem \ref{Thm:3} from
two  local limit theorems.  These theorems use \eqref{IR-2} to study convergence of the transition probabilities for the  Markov chain $\{X_k\}$
from %
Theorem \ref{Thm:1} under the 1:2 scaling of space and time.

We have the following local limit theorem for fixed $q$.
\begin{theorem}\label{Thm-loc-lim} Let  $\{X_k\}$ be the Markov chain
 introduced in Theorem \ref{Thm:1}.
If $0\leq q<1$, and $0<\sigma\leq 1$  then
for $x,y, t>0$ we have
   \begin{equation}
    \label{toprove1}
   \lim_{N\to\infty}\sqrt{N}\, \Pr\pp{X_{\floor{Nt}}=\floor{y\sqrt{N}}\middle|X_0=\floor{x\sqrt{N }}}
   = \frac{y}{x}\,\g_{\frac{t}{1+\sigma}}(x,y), %
   \end{equation}
   where  $\g_t$, $t>0$, is given in  \eqref{BMhit0}.

   Furthermore, if   %
   $\rho_0=1-\C/\sqrt{N}$  varies with $N$ for some fixed constant $\C>0$, then  denoting by $X_0\topp N$  a random variable with the 
   law \eqref{ini-0-laws}, for $x>0$ we have
   \begin{equation}\label{ini-gamma}
     \lim_{N\to\infty}\sqrt{N}\, \Pr\pp{X_{0}^{(N)}=\floor{x\sqrt{N}}}
   =\C^2 x e^{-\C x}.
   \end{equation}
\end{theorem}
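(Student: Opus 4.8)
The plan is to derive both limits from the integral representation \eqref{IR-2}, specialised to the present edge weights through Lemma \ref{P:supp}, and to feed in the fixed-$q$ endpoint asymptotics of Theorem \ref{Thm:3.1}. Throughout $q$ stays fixed, so Theorem \ref{Thm:3.1} (not Theorem \ref{Thm:3.2}) is the relevant tool.

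First I would rewrite \eqref{IR-2} with $k=\floor{Nt}$, $m=\floor{x\sqrt N}$, $n=\floor{y\sqrt N}$ entirely in terms of Al--Salam--Chihara polynomials. Using \eqref{p2Q}, the identities $\wt p_n=[n+1]_q p_n$ from \eqref{wt-pi-n} and $\pi_n=s_n/[n+1]_q$ from \eqref{pi+}, and the substitution $\xi=2(\cos\theta+\sigma)/(1-q)$ (so that the integration variable $\xi\in[A,B]$ corresponds to $\cos\theta\in[-1,1]$, $\theta\in[0,\pi]$), the measure $\nu$ turns into $g(\cos\theta)\sin\theta\,d\theta$ with $g$ as in \eqref{Q-w+}, the ratio $\xi^k/B^k$ becomes $\big(\tfrac{\cos\theta+\sigma}{1+\sigma}\big)^k$, and after the $[m+1]_q$ factors cancel one is left with
\[
\sqrt N\,\Pr\big(X_k=n\mid X_0=m\big)=\sqrt N\,\frac{s_n}{s_m[n+1]_q}\int_0^\pi\Big(\tfrac{\cos\theta+\sigma}{1+\sigma}\Big)^k\frac{Q_m(\cos\theta)Q_n(\cos\theta)}{(q;q)_m(q;q)_n}\,g(\cos\theta)\sin\theta\,d\theta.
\]

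Since $\big(\tfrac{\cos\theta+\sigma}{1+\sigma}\big)^k\approx\exp\!\big(-\tfrac{k\theta^2}{2(1+\sigma)}\big)$ concentrates the integral at $\theta=O(1/\sqrt N)$, I would substitute $\theta=v/\sqrt N$. On this scale Theorem \ref{Thm:3.1} applies with $M=m$ (giving $u=m\theta\to xv$) and with $M=n$ (giving $u\to yv$), so that $\tfrac1{\sqrt N}\tfrac{Q_m(\cos\theta)}{(q;q)_m}\to\tfrac{\sin(xv)}{v}\tfrac{(a,b;q)_\infty}{(q;q)_\infty^2}$ and similarly for $n$; near $\theta=0$ the expansion of \eqref{Q-w+} together with $ab=q^2$ from \eqref{ab=q2} gives $g(\cos\theta)\sin\theta\sim\tfrac{2v^2}{\pi N}\tfrac{(q;q)_\infty^4}{(1-q)(a,b;q)_\infty^2}$, while \eqref{1*} yields $s_n/s_m\to y/x$ and $1/[n+1]_q\to1-q$. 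Multiplying these out, all $q$-Pochhammer factors cancel and the powers of $N$ balance, leaving
\[
\sqrt N\,\Pr\big(X_k=n\mid X_0=m\big)\longrightarrow\frac{y}{x}\cdot\frac{2}{\pi}\int_0^\infty e^{-\frac{t}{1+\sigma}\frac{v^2}{2}}\sin(xv)\sin(yv)\,dv,
\]
and the elementary cosine-transform identity $\frac2\pi\int_0^\infty e^{-sv^2/2}\sin(xv)\sin(yv)\,dv=\g_s(x,y)$ with $s=t/(1+\sigma)$ identifies the right-hand side with $\tfrac yx\,\g_{t/(1+\sigma)}(x,y)$, which is \eqref{toprove1}.

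The step that needs real care, and which I expect to be the main obstacle, is justifying the interchange of limit and integral by dominated convergence after the rescaling $\theta=v/\sqrt N$. Here the polynomial bound \eqref{1*b}, $|Q_m(\cos\theta)Q_n(\cos\theta)|\le C^2(m+1)(n+1)\lesssim N$, controls the oscillating factors, and a uniform Gaussian bound $\big(\tfrac{|\cos\theta+\sigma|}{1+\sigma}\big)^k\le e^{-c_0 v^2}$ controls the tail; the latter holds on all of $[0,\pi]$ because $-\log\tfrac{|\cos\theta+\sigma|}{1+\sigma}\gtrsim\theta^2$, the left side blowing up at the possible zero $\theta=\arccos(-\sigma)$ present when $\sigma<1$. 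Combined with the two powers of $\sin\theta\approx v/\sqrt N$ (one from $g$, one from the Jacobian), these produce an integrable dominating function proportional to $v^2e^{-c_0v^2}$, uniformly in $N$.

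Finally, for the initial law \eqref{ini-gamma} I would insert $n=\floor{x\sqrt N}$ and $\rho_0=1-\C/\sqrt N$ directly into \eqref{ini-0-laws}. By \eqref{1*}, $s_n\sim\tfrac{x\sqrt N\,(a,b;q)_\infty}{(q;q)_\infty^2}$; clearly $\rho_0^n\to e^{-\C x}$; and from $(\rho_0;q)_\infty=(1-\rho_0)(\rho_0q;q)_\infty\sim\tfrac{\C}{\sqrt N}(q;q)_\infty$ one gets $C_0=\tfrac{(a\rho_0,b\rho_0;q)_\infty}{(\rho_0;q)_\infty^2}\sim\tfrac{N(a,b;q)_\infty}{\C^2(q;q)_\infty^2}$. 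Multiplying, $\sqrt N\,\Pr(X_0^{(N)}=n)=\sqrt N\,\rho_0^n s_n/C_0\to\C^2xe^{-\C x}$; this part is routine asymptotics of $q$-Pochhammer symbols.
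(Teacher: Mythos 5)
Your proposal is correct and follows essentially the same route as the paper: the integral representation \eqref{IR-2} specialised via \eqref{p2Q}, the endpoint asymptotics of Theorem \ref{Thm:3.1} after a $1/\sqrt{N}$-scale change of variables, dominated convergence using \eqref{1*b} and a Gaussian bound on $\left(\tfrac{|\cos\theta+\sigma|}{1+\sigma}\right)^k$, and the same evaluation of $\tfrac{2}{\pi}\int_0^\infty e^{-sv^2/2}\sin(xv)\sin(yv)\,dv$; the treatment of \eqref{ini-gamma} is also identical. The only cosmetic differences are that you parametrise by the angle $\theta=v/\sqrt{N}$ rather than $u=\sqrt{2N(1-w)}$ and dominate the whole interval $[0,\pi]$ at once instead of first discarding the $w\in[-1,0]$ piece as the paper does.
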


\begin{proof}
We use \eqref{IR-2} with
\begin{equation}
  \label{kmn}
  k=\floor{Nt}, \quad m=\floor{x\sqrt{N}}, \quad n=\floor{y\sqrt{N}}.
\end{equation}
 (To avoid notation clash, we shall use $w$ and $\wt w$  instead of $x,y$ for the variables of integration, and in formulas such as \eqref{p2Q}.)

In view of \eqref{1*} and  \eqref{p2Q}, it is clear that for $x,y>0$ we have
\begin{equation}\label{y/x}\lim_{N\to\infty}\frac{\pi_{\floor{y\sqrt{N}}}}{\pi_{\floor{x\sqrt{N}}}}=\frac {y}{x}.\end{equation}
Indeed, the constant on the right-hand side of \eqref{1*}  is non-zero, as  our $a,b$ have modulus $q<1$, see \eqref{a,b}.
    So, see \eqref{p-dens} and \eqref{pAB}, we need to find the limit of the expression
\begin{multline*}
      \frac{1-q}{2}\frac{\sqrt{N}}{B^k}\int_A^B \wt w ^k p_m(\wt w)\wt p_n(\wt w)g\pp{\tfrac{1-q}{2}\wt w-\sigma}   d\wt w
 \\ = \frac{\sqrt{N}}{(1+\sigma)^k[m+1]_q} \int_{-1}^1
   (w+\sigma)^k \frac{Q_m(w)}{(q;q)_m}\frac{Q_n(w)}{(q;q)_n} g(w) d w.
\end{multline*}

(Here, we used  \eqref{p2Q} and identity/substitution $\wt w/B=(w+\sigma)/(1+\sigma)$.)

Next we observe that the integral  over the interval $[-1,0]$ is negligible.
Indeed,  on this interval
$|w+\sigma|\leq \sigma\vee(1-\sigma)=r(1+\sigma)
 $, where the latter defines $r$.
Together  with \eqref{1*b} this shows that
 $$
 \tfrac{\sqrt{N}}{[m+1]_q} \left|\int_{-1}^0
    \pp{\tfrac{w+\sigma}{1+\sigma}}^k
   \tfrac{Q_m(w)}{(q;q)_m}\tfrac{Q_n(w)}{(q;q)_n} g(w) dw  \right| %
   \leq C \sqrt{N} (m+1)(n+1) r^k \int_{-1}^0 g\pp{w}dw \to 0
 $$
 as  $N\to\infty$, since $k, m, n $ are given by \eqref{kmn}  and $r\in[0,1)$.

It remains to analyze the  case of  the integral over $[0,1]$.
Changing the variable of integration  by setting  $u=\sqrt{2N(1-w)}$, i.e. $w=1-\frac{u^2}{2N}$,  we get
\begin{multline}\label{2nd-lim2}
  \frac{\sqrt{N}}{[m+1]_q} \int_{0}^1
   \left(\frac{w+\sigma}{1+\sigma}\right)^k\, \frac{Q_m(w)}{(q;q)_m}\frac{Q_n(w)}{(q;q)_n} g(w) dw
\\
=\frac{1}{[m+1]_q} \int_{0}^{\sqrt{2N}}
   \pp{1-\tfrac{u^2}{2(1+\sigma)N}}^k \frac{Q_m\pp{1-\tfrac{u^2}{2N}}}{\sqrt{N}(q;q)_m} \frac{Q_n\pp{1-\tfrac{u^2}{2N}}}{\sqrt{N}(q;q)_n} \sqrt{N} g\pp{1-\tfrac{u^2}{2N}}   u du.
  \end{multline}

Clearly, $\frac{1}{[m+1]_q}\to 1-q$.
The next observation is that for $u\in[0,\sqrt{2N}]$ and
$k=\floor{Nt}$, %
we have
\begin{equation}\label{expexp}\pp{1-\frac{u^2}{2(1+\sigma)N}}^k %
\sim e^{- \frac{u^2 t}{2(1+\sigma)}} \mbox{ as $N\to\infty$}.\end{equation}

 Now we refer to \eqref{Q-w+} for $g\pp{1-\tfrac{u^2}{2N}}$. Noting  that $\theta\sim \frac{u}{\sqrt{N}}\sim \sqrt{1-(1-\frac{u^2}{2N})^2}$,  we get %
\begin{equation}\label{l*l}
  \sqrt{N}   g\pp{1-\frac{u^2}{2N}} \sim \frac{2\,u }{\pi  } (q;q)_\infty^3 \frac{(ab;q)_\infty}{(a,b;q)^2_\infty}
    = \frac{2 u }{\pi (1-q)} \;\frac{(q;q)_\infty^4  }{(a,b;q)^2_{\infty}}
\end{equation}
for $u\in[0,\sqrt{2N}]$,
where the last equality is by \eqref{ab=q2}.

We now determine the asymptotics for the remaining factors of the integrand on the right-hand side of \eqref{2nd-lim2}. We
apply Theorem \ref{Thm:3.1} with $M=\floor{\sqrt{N}x}\to \infty$ and with $u_M/M:=u/\sqrt{N}$ so that
$u_M=u M /\sqrt{N}= u x +O(1/M)$. Recalling \eqref{kmn}, we see that $m=M$ and thus in view of \eqref{P-R-limQ+} we have
\begin{equation}\label{Qm2sin}
    \frac{Q_m\pp{1-\frac{u^2}{2N}}}{\sqrt{N}} =
\frac{\floor{\sqrt{N}x}}{\sqrt{N}} \;
\frac{
Q_{M}\pp{1-\frac{u_M^2}{2M^{2}}}}{M}
\to x\, \frac{\sin (u x)}{u x} \;
 \frac{(a,b;q)_\infty}{(q;q)_\infty } = \frac{\sin (u x)}{u } \;
 \frac{(a,b;q)_\infty}{(q;q)_\infty }
\end{equation}
 as $N\to\infty$. The same argument with $n$ from \eqref{kmn} gives
\begin{equation}\label{Qn2sin}
\frac{Q_n\pp{1-\frac{u^2}{2N}}}{\sqrt{N}} \to \frac{\sin (u y)}{u } \;
 \frac{(a,b;q)_\infty}{(q;q)_\infty }  \mbox{ as $N\to\infty$}.
\end{equation}
To verify uniform integrability, we apply elementary bounds
$$(q;q)_\infty\leq |(z;q)_\infty|\leq (-q;q)_\infty, \quad |z|\leq q $$
within \eqref{Q-w+}. We get
$$\sqrt{N}\,g\pp{1-\tfrac{u^2}{2N}}\leq  \sqrt{N}\sqrt{1-\left(1-\tfrac{u^2}{2N}\right)^2} \; \frac{2(-q;q)_\infty^2}{\pi(1-q)(q;q)_\infty}\;\leq C(q) |u|.$$
 Together with  an elementary  bound $(1-x/N)^k\leq e^{-x t}$ for $0\leq x\leq N$  and the fact that $[m+1]_q\to 1/(1-q)$ this shows that
the integrand on the right-hand side of \eqref{2nd-lim2} is bounded by an integrable function $C u^2 e^{-c u^2}$, so we can use dominated convergence theorem to pass to the limit under the integral.
 Combining \eqref{y/x}, \eqref{2nd-lim2}, \eqref{expexp},  \eqref{l*l}, \eqref{Qm2sin} and \eqref{Qn2sin}  %
we conclude that,  the left-hand side of  \eqref{toprove1} becomes
\begin{multline*}%
  \frac{y}{x}\,\frac{(q;q)_\infty^4}{(a,b;q)_\infty^2} %
   \frac{2 }{\pi}  \int_{0}^\infty  \lim_{N\to\infty} \left(\ind_{u<\sqrt{2 N}}
   \pp{1-\frac{u^2}{2(1+\sigma)N}}^{\floor{Nt}}
   \frac{Q_m\pp{1-\frac{u^2}{2N}}}{\sqrt{N}(q;q)_m}
   \frac{Q_n\pp{1-\frac{u^2}{2N}}}{\sqrt{N}(q;q)_n}\right)\,  u^2 \,du
   \\
   =\,\frac{y}{x}\, \frac{2}{\pi}\int_0^\infty e^{-\frac{u^2t}{2(1+\sigma)}}\sin(ux)\sin(uy)\, du =   \frac{y}{x}\,\sqrt{\frac{1+\sigma}{2\pi t}}\,\left(e^{-\frac{(y-x)^2(1+\sigma)}{2t}}-e^{-\frac{(y+x)^2(1+\sigma)}{2t}}\right)
\end{multline*}
as required.

To prove \eqref{ini-gamma} we use \eqref{1*}.
Using notation \eqref{kmn}, from \eqref{ini-0-laws} and  \eqref{Qn-expl} %
  we get
\begin{align}
  &\sqrt{N}\Pr(X_0\topp{N}=m)=\sqrt{N} \frac{(\rho_0;q)_\infty^2}{(a\rho_0,b\rho_0;q)_\infty}
  \rho_0^m  \frac{ Q_m(1;a,b|q)}{ (q;q)_m}\label{NPr}\\
  &=
  N  \left(\frac{\C}{\sqrt{N}}\right)^2\,\frac{\floor{x\sqrt{N}}}{\sqrt{N}}\,\left(1-\frac{\C}{\sqrt{N}}\right)^{\floor{x\sqrt{N}}}\,  \frac{(q\rho_0;q)_\infty^2}{(q;q)_m}
 \;  \frac{ Q_m(1;a,b|q)}{ m \,(a\rho_0,b\rho_0;q)_\infty} \to \C^2x\,e^{-\C x}
\nonumber
\end{align}
as $N\to\infty$.
\end{proof}

Next we prove a local limit theorem with $q\nearrow 1$.
 Note that if parameter  $q$ varies with $N$, then the law of the Markov chain $\{X_k\}$
 from %
 Theorem \ref{Thm:1} also varies with $N$, so
  we  denote this process by $\{X_k\topp N\}$.

For $z\in\RR$ we denote %
 \begin{equation}\label{zN}
 \Jz{z}{N}=\floor{z\sqrt{N}}+\floor{\sqrt{N}\,\log\,\sqrt{N\,2\log(1+\sigma)}}.
 \end{equation}
 With this notation we
 have the following version of Theorem \ref{Thm-loc-lim}  for $q\nearrow 1$:
\begin{theorem}\label{Thm-q=1}
Let  $\{X_k\topp N\}$ be the Markov chain
 introduced in Theorem \ref{Thm:1}.
If $0<\sigma\leq 1$ is fixed %
and $q=e^{-2/\sqrt{N}}$  then
for $x,y\in\RR$, $t>0$ we have
    \begin{equation}\label{toprove*}
   \lim_{N\to\infty}\sqrt{N} \,\Pr\Big(X_{\floor{ Nt}}^{(N)}=\Jz{y}{N}
   \Big\vert\; X_0^{(N)} =\Jz{x}{N}
   \Big)
    = \frac{K_0(e^{-y})}{K_0(e^{-x})}\; \p_{\frac{t}{1+\sigma}}(x,y),
   \end{equation}%
   where $\p_t$, %
   $t> 0$,
   is given in \eqref{Yak}.

In addition, if $\rho_0=e^{-\C/\sqrt{N}}=q^{\C/2}$  for some fixed constant $\C>0$, then
\begin{equation}\label{ini-K}
  \lim_{N\to\infty}\sqrt{N} \Pr\pp{X_{0}^{(N)}=\Jz{x}{N}}=%
  \frac{4}{2^\C \Gamma(\C/2)^2} K_0(e^{-x}).
\end{equation}
\end{theorem}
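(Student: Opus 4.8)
The plan is to follow the proof of Theorem \ref{Thm-loc-lim} line by line, substituting the $q\nearrow 1$ asymptotics of Theorem \ref{Thm:3.2} for the fixed-$q$ asymptotics of Theorem \ref{Thm:3.1}. I start from the representation \eqref{IR-2} with $k=\floor{Nt}$, $m=\Jz{x}{N}$, $n=\Jz{y}{N}$, and peel off the factor $\pi_n/\pi_m$. Writing $\pi_n=Q_n(1;a,b|q)/([n+1]_q(q;q)_n)$ as in \eqref{pin}, using the $u=0$ instance of Theorem \ref{Thm:3.2} (note $K_{\i\cdot 0}=K_0$) and the fact that $[m+1]_q,[n+1]_q\sim 1/(1-q)\sim\sqrt N/2$ (since $q^{m+1},q^{n+1}\to 0$ because $m,n\gg\sqrt N$), the common prefactors $\sqrt N\,(a,b;q)_\infty/(q;q)_\infty^2$ cancel in the ratio and I obtain $\pi_n/\pi_m\to K_0(e^{-y})/K_0(e^{-x})$, the leading factor of \eqref{toprove*}. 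Here the choice $M=\sqrt N$ is forced by $q=e^{-2/\sqrt N}=e^{-2/M}$, and the logarithmic shift in \eqref{zN} is exactly the centering $\floor{M\log(M\sqrt{(1+\wt a)(1+\wt b)})}$ of Theorem \ref{Thm:3.2}, since $\wt a=\sigma+\i\sqrt{1-\sigma^2}$, $\wt b=\sigma-\i\sqrt{1-\sigma^2}$ give $(1+\wt a)(1+\wt b)=2(1+\sigma)$.

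For the remaining factor $\sqrt N\,B^{-k}\int x^k p_m\wt p_n\,\nu(dx)$ I use \eqref{p2Q} and \eqref{p-dens} to rewrite it as $\tfrac{\sqrt N}{[m+1]_q}\int_{-1}^1 \pp{\tfrac{w+\sigma}{1+\sigma}}^k\tfrac{Q_m(w)}{(q;q)_m}\tfrac{Q_n(w)}{(q;q)_n}\,g(w)\,dw$, discard the piece over $[-1,0]$ (geometric decay $(\sigma\vee(1-\sigma))^k/(1+\sigma)^k$ beats the growth of the $Q$'s), and on $[0,1]$ substitute $w=\cos(u/\sqrt N)$, so $dw\sim -(u/N)\,du$. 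Then $\pp{\tfrac{w+\sigma}{1+\sigma}}^k\to e^{-u^2t/(2(1+\sigma))}$ as in \eqref{expexp}; Theorem \ref{Thm:3.2} gives $Q_m(w)/(q;q)_m\sim\tfrac{\sqrt N(a,b;q)_\infty}{(q;q)_\infty^2}K_{\i u}(e^{-x})$ and the analogue with $K_{\i u}(e^{-y})$ for $n$; and $\sqrt N/[m+1]_q\to 2$. The genuinely new ingredient is the $q\nearrow 1$ behaviour of the density \eqref{Q-w+} at $w=\cos(u/\sqrt N)$. Using $e^{\i\theta}=q^{-\i u/2}$, the factor $|(qe^{2\i\theta};q)_\infty|^2=(q^{1-\i u};q)_\infty(q^{1+\i u};q)_\infty$ becomes $(q;q)_\infty^2/\bigl(\Gamma_q(1-\i u)\Gamma_q(1+\i u)\bigr)$ by the $q$-Gamma identity \eqref{q-Gamma}, while the four factors $(-\wt a q^{1\mp\i u/2};q)_\infty$, $(-\wt b q^{1\mp\i u/2};q)_\infty$ combine with $(a,b;q)_\infty^2=(-\wt a q,-\wt b q;q)_\infty^2$: each ratio $(-\wt a q;q)_\infty/(-\wt a q^{1+z};q)_\infty\to(1+\wt a)^z$ as $q\nearrow 1$, so the two conjugate shifts $z=\mp\i u/2$ cancel and the whole $\wt a,\wt b$-dependent product tends to $1$. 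Collecting the powers of $N$ (the factor $(q^2;q)_\infty=(q;q)_\infty/(1-q)\sim(q;q)_\infty\sqrt N/2$ inside the density supplies the needed $\sqrt N$) and noting that the two $\sqrt{1-w^2}=\sin(u/\sqrt N)$ contributions---one from the density, one from the Jacobian---produce a factor $u^2$, the full prefactor of $e^{-u^2t/(2(1+\sigma))}K_{\i u}(e^{-x})K_{\i u}(e^{-y})$ converges to $\tfrac{2u^2}{\pi\,\Gamma(1-\i u)\Gamma(1+\i u)}=\tfrac{2}{\pi|\Gamma(\i u)|^2}$, using $\Gamma(1+\i u)=\i u\,\Gamma(\i u)$.

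The main obstacle is the passage to the limit under the integral when $q=q_N$ varies: I must dominate the integrand by some $A\,u^2e^{-Bu^2}$ uniformly in $N$, which forces me to control $Q_m(w)/(q;q)_m$ and the density \eqref{Q-w+} simultaneously and uniformly as $q\nearrow 1$. The fixed-$q$ bound \eqref{1*b} is not enough here because its constant depends on $q$; instead I would derive the required uniform bounds from the same $q$-Pochhammer/$q$-Gamma estimates used for Theorem \ref{Thm:3.2} (such as \eqref{GqGq} and \eqref{*dc-2}), combined with $(1-s/N)^k\le e^{-st}$; these same uniform bounds also justify discarding the $[-1,0]$ part. Granting domination, the limiting integral equals $\tfrac{2}{\pi}\int_0^\infty e^{-u^2t/(2(1+\sigma))}K_{\i u}(e^{-x})K_{\i u}(e^{-y})\tfrac{du}{|\Gamma(\i u)|^2}=\p_{t/(1+\sigma)}(x,y)$ by \eqref{Yak}, and multiplying by $\pi_n/\pi_m\to K_0(e^{-y})/K_0(e^{-x})$ yields \eqref{toprove*}.

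Finally, for the initial law I mirror \eqref{NPr}: by \eqref{ini-0-laws} and \eqref{Qn-expl}, $\sqrt N\,\Pr(X_0^{(N)}=m)=\sqrt N\,\tfrac{(\rho_0;q)_\infty^2}{(a\rho_0,b\rho_0;q)_\infty}\,\rho_0^m\,\tfrac{Q_m(1;a,b|q)}{(q;q)_m}$. The $u=0$ case of Theorem \ref{Thm:3.2} gives $Q_m(1)/(q;q)_m\sim\tfrac{\sqrt N(a,b;q)_\infty}{(q;q)_\infty^2}K_0(e^{-x})$; with $\rho_0=q^{\C/2}$ the power $\rho_0^m$ contributes $e^{-\C x}(2N(1+\sigma))^{-\C/2}$; the identity \eqref{q-Gamma} evaluates $(\rho_0;q)_\infty^2=(q^{\C/2};q)_\infty^2$, and the estimate \eqref{eq6} with $z=\C/2$ evaluates $(a\rho_0,b\rho_0;q)_\infty=(-\wt a q^{1+\C/2},-\wt b q^{1+\C/2};q)_\infty$. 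Every power of $N$ and every surviving $q$-Pochhammer cancels (the factor $(2(1+\sigma))^{\C/2}$ from \eqref{eq6} cancelling the one from $\rho_0^m$), and $\Gamma_q(\C/2)\to\Gamma(\C/2)$, leaving $\tfrac{4}{2^{\C}\Gamma(\C/2)^2}\,e^{-\C x}K_0(e^{-x})$, which is the initial density \eqref{zeta0} and establishes \eqref{ini-K}.
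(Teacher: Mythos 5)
Your main computational line coincides with the paper's proof: both start from \eqref{IR-2} with $m=\Jz{x}{N}$, $n=\Jz{y}{N}$, extract the ratio $\pi_n/\pi_m\to K_0(e^{-y})/K_0(e^{-x})$ from the $u=0$ case of Theorem \ref{Thm:3.2}, substitute $w=\cos(u/\sqrt N)$, rewrite the density via $\Gamma_q$, and assemble the same limits (your bookkeeping with $\Gamma_q(1\pm\i u)$ versus the paper's $\Gamma_q(\i u)$ is an equivalent use of the functional equation). Your computation of the initial law is also correct --- in fact it yields the factor $e^{-\C x}$, which matches \eqref{zeta0} and the paper's own chain of limits; the displayed \eqref{ini-K} omits $e^{-\C x}$ and appears to be a typo in the statement.

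The genuine gap is in the dominated-convergence step, which you correctly flag as the main obstacle but then resolve with tools that do not suffice. You propose to get the uniform bounds from ``the same $q$-Pochhammer/$q$-Gamma estimates used for Theorem \ref{Thm:3.2}, such as \eqref{GqGq} and \eqref{*dc-2}.'' But these control the wrong direction for the density: after the substitution, $g$ carries $\Gamma_q(1+\i u)\Gamma_q(1-\i u)$ and $|(aq^{\i u/2},bq^{\i u/2};q)_\infty|^2$ in its \emph{denominator}, so what is needed are \emph{lower} bounds on these quantities, uniform in $q$ and $u$. Estimate \eqref{GqGq} is an upper bound on $|\Gamma_q(1+\i s)|$, and \eqref{*dc-2} gives $|(-\wt a q^{1+\i s};q)_\infty|\le |(-\wt a q;q)_\infty|$, i.e.\ exactly the inequality opposite to the one required. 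The paper supplies the missing ingredients through the theta-function analysis of Section \ref{Sec:Bds-Gq}: the two-sided bound of Lemma \ref{lemma_q_Gamma_bound2} (whose lower half gives Lemma \ref{P:gamma}) and the ratio bound of Lemma \ref{lemma_q_Gamma_bound4} (which gives Lemma \ref{Prop-Alexey}); these are not consequences of \eqref{GqGq} or \eqref{*dc-2}. For the polynomial factors the paper uses Proposition \ref{L4.5+}, $|Q_n(x)|\le Q_n(1)$ on $[-1,1]$, which together with the $u=0$ case of Theorem \ref{Thm:3.2} gives a bound uniform in both $u$ and $N$; you do not invoke this, and the $q$-dependent bound \eqref{1*b} is, as you note, useless here. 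Finally, your claim that the same uniform bounds ``also justify discarding the $[-1,0]$ part'' is problematic: on $[-1,0]$ the substitution corresponds to $u\in[\pi\sqrt N/2,\pi\sqrt N]$, outside the window $|u|<\pi M/2$ where all the $\Gamma_q$ estimates are valid (by Lemma \ref{Fact2} the relevant quantities are periodic and turn around at $\pm\pi M/2$), so no pointwise control of $Q_m(w)/(q;q)_m$ is available there. The paper avoids this entirely by bounding the discarded integral via $|Q_mQ_n|\le\tfrac12(Q_m^2+Q_n^2)$ and the exact orthogonality norms $\int Q_n^2\,g/(q;q)_n^2=[n+1]_q$, which costs only a factor $O(\sqrt N)$ against the geometric decay $r^k$. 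Without these three ingredients the passage to the limit under the integral, and hence \eqref{toprove*}, is not justified.
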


\begin{proof}  We write the parameters \eqref{a,b} in trigonometric form $a=-q e^{\i\alpha}$, $b=-q e^{-\i\alpha}$ with $\sigma=\cos \alpha$, $\alpha\in(-\pi/2,\pi/2)$.

First, we prove \eqref{ini-K}.
As in \eqref{NPr} we have
$$
 \sqrt{N}\,\Pr(X_0^{(N)} =\Jz{x}{N})=\sqrt{N} \frac{(\rho_0;q)_\infty^2}{(a\rho_0,b\rho_0;q)_\infty}
  \rho_0^{\Jz{x}{N}}%
  \frac{ Q_{\Jz{x}{N}}(1;a,b|q)}{ (q;q)_{\Jz{x}{N}}}.
$$
Denote $M=\sqrt{N}$. %
Then with $\widetilde a=e^{\i\alpha}$ and $\widetilde b=e^{-\i\alpha}$ we see that $m$ as defined in Theorem \ref{Thm:3.2} assumes the form $m=\Jz{x}{N}$. Consequently, we can rewrite the above expression as follows
$$
 \sqrt{N}\,\Pr(X_0^{(N)} =\Jz{x}{N})= \left(Me^{-\frac{m}M}\right)^{\C}\, M^{2-\C}\, \left(\frac{(\rho_0;q)_\infty}{(q;q)_\infty}\right)^2 \frac{(a,b;q)_\infty}{(a\rho_0,b\rho_0;q)_\infty}  \times J_M,$$
 where
 \begin{equation}\label{JM}
  J_M=\frac{(q;q)_\infty^2}{M (-q\widetilde a,-q\widetilde b;q)_\infty}\frac{ Q_{m}(1;-q\widetilde a,-q\widetilde b|q)}{ (q;q)_{m}}\to K_0(e^{-x})\quad\mbox{as }\;N\to\infty,
  \end{equation}
 with the limit deduced from  \eqref{Q2Kab}. Clearly, %
 \begin{equation}\label{MeM}
  Me^{-\frac{m}M}=\sqrt{N} e^{-\frac{\Jz{x}{N}}{\sqrt{N}}}\to \frac{e^{-x}}{\sqrt{2(1+\sigma)}}\quad \mbox{as }\;N\to\infty.
  \end{equation}

  Moreover, recalling \eqref{q-Gamma}, we can write
  \begin{equation}\label{M2c}
  M^{2-\C}\, \left(\frac{(\rho_0;q)_\infty}{(q;q)_\infty}\right)^2=M^{2-\C}\, \left(\frac{(q^{\frac{\C}{2}};q)_\infty}{(q;q)_\infty}\right)^2=\frac{(M(1-q))^{2-\C}}{\Gamma_q\left(\C/2\right)^2}\to\frac{2^{2-\C}}{\Gamma\left(\C/2\right)^2}
  \end{equation}
  as $N\to\infty$, where the last limit follows from \eqref{qG2G}.

  Finally, relying on \eqref{eq6}, we get %
  \begin{multline}\label{Ramaj}
  \frac{(a,b;q)_\infty}{(a\rho_0,b\rho_0;q)_\infty} =\frac{(-\wt a q,-\wt b q;q)_\infty}{(-\wt a q^{1+\C/2},-\wt b q^{1+\C/2};q)_\infty} \\=\frac{(1+\widetilde a q^{\frac{\C}{2}})(1+\widetilde b q^{\frac{\C}{2}})}{(1+\widetilde a)(1+\widetilde b)}\,\frac{(-\widetilde a;q)_{\infty}}{(-\widetilde a q^{\frac{\C}{2}};q)_{\infty}}\,\frac{(-\widetilde b;q)_{\infty}}{(-\widetilde b q^{\frac{\C}{2}};q)_{\infty}}
   \to (1+e^{\i \alpha})^{\C/2}(1+e^{-\i \alpha})^{\C/2}
=  2^{\C/2}(1+\sigma)^{\C/2}
  \end{multline}
as  $(1+e^{\i \alpha})^{\C/2}(1+e^{-\i \alpha})^{\C/2}=(2+2\cos\alpha)^{\C/2}$. %
Combining \eqref{JM}, \eqref{MeM},  \eqref{M2c} and \eqref{Ramaj} we get \eqref{ini-K}.

 Second, we prove \eqref{toprove*}.
 Our starting point is formula \eqref{IR-2}, which we use with
 \begin{equation}
   \label{kmn-log}
   m:=\Jz{x}{N},\quad
   n:=\Jz{y}{N},\quad
   k=\floor{N t}.
 \end{equation}
 Even though $x,y\in\RR$, note that $m,n$ are non-negative integers for large enough $N$.
 In particular, we note that $m/n\to 1$ and $m/k\to 0$ as $N\to\infty$. It is also clear that
 $[n+1]_q\sim [m+1]_q\sim \sqrt{N}/2$.
  (To avoid notation clash, we  use $w$ and $\wt w$  instead of $x,y$ for the variables of integration, or in formulas like  \eqref{p2Q}.)

  From   \eqref{IR-2}, we get
\begin{multline}\label{6*g}
 \sqrt{N} \Pr(X_k=\Jz{y}{N}|X_0=\Jz{x}{N}) =\frac{\pi_n}{\pi_m} \frac{\sqrt{N}}{B^k}\int_{A}^B \wt w^k p_m(\wt w) [n+1]_q p_n(\wt w) g(\wt w) d\wt w
  \\= \tfrac{Q_n(1;a,b|q)}{ (q;q)_{n}} \; \tfrac{(q;q)_{m}}{Q_m(1;a,b|q)}\; \tfrac{\sqrt{N}} %
  {[n+1]_q}
  \int_{-1}^{1} \frac{\pp{w+\sigma}^k}{(1+\sigma)^k} \frac{Q_m\pp{w;a,b|q}}{(q;q)_{m}}\, \frac{Q_n\pp{w;a,b|q}}{(q;q)_n}\,g(w)\,  d w,
  \end{multline}
  where the last equality follows from \eqref{pin} and \eqref{p2Q}.

By Theorem \ref{Thm:3.2} used with $M=\sqrt{N}$, $u=0$,  and $m=\Jz{y}{N}$ or $m=\Jz{x}{N}$ we get
\begin{multline}\label{K/K}
\lim_{N\to\infty}  \frac{Q_n(1;a,b|q)}{(q;q)_{n}} \; \frac{(q;q)_{m}}{Q_m(1;a,b|q)}
\\=
\lim_{N\to\infty} \frac{(q;q)_\infty^2 Q_{\Jz{y}{N}}\,(1;a,b|q)}{\sqrt{N}  (a,b;q)_\infty\,(q;q)_{\Jz{y}{N}}} \times  \lim_{N\to\infty} \frac{\sqrt{N}  (a,b;q)_\infty(q;q)_{\Jz{x}{N}}}{(q;q)_\infty^2Q_{\Jz{x}{N}}(1;a,b|q)}
=\frac{K_0(e^{-y})}{K_0(e^{-x})}.
\end{multline}
This gives the pair of Bessel functions $K_0$ on the right-hand side of \eqref{toprove*}. %

Next, we show that the contribution from the integral in \eqref{6*g} over $[-1,1/2]$ is negligible.
Note that for $w$ from this interval $|w+\sigma|<(1/2+\sigma)\vee(1-\sigma)=r  (1+\sigma)$, where the latter defines $r$.
 Similarly   as in the proof of Theorem \ref{Thm-loc-lim},  the contribution of the integral over $[-1,1/2]$ is bounded by %
\begin{multline*}
  C  r^k\, \frac{\sqrt{N}}{[n+1]_q}
  \int_{-1}^{1/2}
   \frac{|Q_m\pp{w;a,b|q}|}{(q;q)_{m}} \frac{|Q_n\pp{w;a,b|q}|}{(q;q)_n} g\pp{w}  d w\leq
   \\  C  r^k\, \frac{\sqrt{N}}{2 [n+1]_q}
    \int_{-1}^1
   \pp{\frac{Q_m^2\pp{w;a,b|q}}{(q;q)_{m}^2}+ \frac{Q_n^2\pp{w;a,b|q}}{(q;q)_n^2}} g\pp{w}  d w
   \\= C  r^k\, \frac{\sqrt{N}}{2 [n+1]_q}\pp{[m+1]_q+[n+1]_q}\to 0.
\end{multline*}
Here $C$ is a constant from boundedness of the  convergent sequence in \eqref{K/K}, and the last equality follows from \eqref{wt-pi-n} and \eqref{p2Q}. To compute the limit
 we used the observation that $[n+1]_q\sim [m+1]_q\sim \sqrt{N}/2$, $k\sim N t$ and $r\in[0,1)$.

Substituting $w=\cos(u/\sqrt{N})$ into \eqref{6*g} and discarding the non-contributing part of the integral, we have
\begin{equation}\label{6*h}
 \sqrt{N} \Pr(X_k=\Jz{y}{N}|X_0=\Jz{x}{N}) \sim \frac{K_0(e^{-y})}{K_0(e^{-x})}   \int_0^\infty\,\mathbf 1_{(0,\,\frac{\pi}{3}\sqrt{N})}(u) f_N(u)\,du,
 \end{equation}
 where  %
 $$
  f_N(u)= %
 \tfrac{1}{[n+1]_q} \pp{\tfrac{\sigma+\cos\frac{u}{\sqrt{N}}}{1+\sigma}}^k
   \tfrac{Q_m\pp{\cos\frac{u}{\sqrt{N}};a,b|q}}{(q;q)_{m}}\, \tfrac{Q_n\pp{\cos\frac{u}{\sqrt{N}};a,b|q}}{(q;q)_n} g\pp{\cos(\tfrac{u}{\sqrt{N}})} \sin(\tfrac{u}{\sqrt{N}}).
  $$
  Referring first to \eqref{Q-w} and then using $(1-q)(ab;q)_\infty=(q;q)_\infty$ and \eqref{q-Gamma} we get %
$$g\pp{\cos(\tfrac{u}{\sqrt{N}})} \sin(\tfrac{u}{\sqrt{N}})=
\frac{(q,a b;q)_\infty|(q^{\i u};q)_\infty|^2}{2\pi |(a q^{\i u/2},b q^{\i u/2};q)_\infty|^2} %
=\frac{(q;q)_\infty^4(1-q)}{2\pi |\Gamma_q(\i u)|^2\,|(a q^{\i u/2},b q^{\i u/2};q)_\infty|^2}.$$
Thus $f_N$ can be written as  %
\begin{multline}
f_N(u)=%
\frac{N(1-q)}{[n+1]_q}\,
\left(\frac{\sigma+\cos \frac{u}{\sqrt{N}}}{1+\sigma}\right)^{\floor{Nt}}\,\frac{1}{2\pi\,|\Gamma_q(\i u)|^2} \\
\times   \frac {(q;q)_\infty^2 Q_{\Jz{x}{N}}(\cos \frac{u}{\sqrt{N}};a,b|q)}{\sqrt{N} (a,b;q)_{\infty}\,(q;q)_{\Jz{x}{N}}} \,\frac {(q;q)_\infty^2 Q_{\Jz{y}{N}}(\cos \frac{u}{\sqrt{N}};a,b|q)}{ \sqrt{N}(a,b;q)_{\infty}\,(q;q)_{\Jz{y}{N}}}\, \frac{(a,b;q)_{\infty}^2}{|(aq^{\i u/2},b q^{\i u/2};q)_{\infty}|^2}.
\end{multline}
Using $[n+1]_q\sim \sqrt{N}/2$ again, we see that the first factor is asymptotically constant,  ${N(1-q)}/{[n+1]_q}\sim 4$.
Furthermore,
$$
\left(\frac{\sigma+\cos \frac{u}{\sqrt{N}}}{1+\sigma}\right)^{\floor{Nt}}\to e^{-\frac{u^2 t}{2(1+\sigma)}}.
$$
{
By Lemma \ref{L:Ram2}, $\Gamma_q(\i u)\to \Gamma(\i u)$. From Theorem \ref{Thm:3.2}, we see that for real $z$, recall \eqref{zN},
\begin{equation}\label{*7*} \frac {(q;q)_\infty^2}{\sqrt{N}(a,b;q)_{\infty}}\,\frac{Q_{\Jz{z}{N}}(\cos\frac{u}{\sqrt{N}};a,b|q)}{(q;q)_{\Jz{z}{N}}} \to  K_{\i u}(e^{-z}).
\end{equation}
 In view of Lemma \ref{Lem:Ram}  %
\begin{multline*}
\frac{(a,b;q)_{\infty}}{(aq^{
\i u/2},b q^{\i u/2};q)_{\infty}}= \frac{(-e^{\i\alpha}q;q)_{\infty}} {(-e^{\i\alpha}q^{1+\i u/2};q)_{\infty}}
\frac{(-e^{-\i\alpha}q;q)_{\infty}} {(-e^{-\i\alpha}q^{1+\i u/2};q)_{\infty}} %
\to
  (1+e^{\i \alpha})^{\i u/2} (1+e^{-\i \alpha})^{\i u/2} \\= (2+2\sigma)^{\i \frac{u}{2}}.
\end{multline*}
 Note that $|(2+2\sigma)^{\i \frac{u}{2}}|=1$, so this factor does not contribute to the limit of $f_N$.

}
To summarize, we see that %
\begin{equation}\label{*6*}
\lim_{N\to\infty}\,\mathbf 1_{(0,\,\frac{\pi}{3}\sqrt{N})}(u)\,f_N(u)=\mathbf 1_{(0,\infty)}(u)\;\frac{2}{\pi}\,e^{-\frac{u^2 t}{2(1+\sigma)}}\, K_{\i u}(e^{-x})\, K_{\i u}(e^{-y})\,\frac{1}{|\Gamma(\i u)|^2}.
\end{equation}

We now justify that one can pass to the limit  under the integral \eqref{6*h}.
For $0<u< \frac{\pi}{3}\,\sqrt{N}$ we have
$$\left(\frac{\cos(u/\sqrt{N})+\sigma}{1+\sigma}\right)^k\leq \left(1-\frac{u^2}{2(1+\sigma)N}\right)^{N t}
\leq C e^{-\frac{u^2t}{2(1+\sigma)}} $$
for some $C<2.5$.   Recall that $t,\sigma$ are fixed.

By  Proposition \ref{L4.5+}  and Theorem \ref{Thm:3.2}   we know that  for
real $z$, recall \eqref{*7*}, we have
$$
\left| \frac {(q;q)_\infty^2}{\sqrt{N}(a,b;q)_{\infty}}\,\frac{Q_{\Jz{z}{N}}(\cos \frac{u}{M};a,b|q)}{(q;q)_{\Jz{z}{N}}}\right|  \leq
\frac {(q;q)_\infty^2}{\sqrt{N}(a,b;q)_{\infty}}\,\frac{Q_{\Jz{z}{N}}(1;a,b|q)}{(q;q)_{\Jz{z}{N}}}\to  K_0(e^{-z}),
$$
 therefore the expression on the left-hand side above is uniformly bounded in $u,N$. By Lemma \ref{Prop-Alexey},
 there exist $A>0$, $B>0$ and $M^*$ such that
 \begin{equation*}\label{ab/qaqb'}
  {\mathbf 1}_{\{|u|<\pi \sqrt{N}/2 \}} \frac{(a,b;q)_{\infty}^2}{|(aq^{\i u/2},bq^{\i u/2};q)_{\infty}|^2}\leq A e^{B|u|}
 \end{equation*}
 for all $N\geq M_*^2$ and all $u\in\RR$. By Lemma \ref{P:gamma},
 $1/|\Gamma_q(\i u)|^2\leq A u^2 e^{B|u|}$ for $|u|<\sqrt{N}\pi /2$.
 Moreover, recalling that $n=\Jz{y}{N}$, the convergent sequence $N(1-q)/[n+1]_q$ is bounded.
 Combining all these bounds, we see that there are constants $A,B,M_*>0$ such that   for   all $N>M_*^2$  function $f_N(u)\mathbf 1_{(0,\frac{\pi}{3}\sqrt{N})}(u)$    is bounded  on $[0,\infty)$   by  the integrable function %
 $$A u^2 e^{B |u|}  e^{-\frac{u^2t}{2(1+\sigma)}},$$
so we can invoke the dominated convergence theorem.  Taking the limit \eqref{*6*} inside the integral in \eqref{6*h}, we see that
\begin{multline*}
 \lim_{N\to\infty} \sqrt{N} \Pr(X_k=\Jz{y}{N}|X_0=\Jz{x}{N}) %
  =
  \frac{K_0(e^{-y})}{K_0(e^{-x})}   \,\frac{2}{\pi}\int_0^\infty\,e^{-\frac{u^2 t}{2(1+\sigma)}}\, K_{\i u}(e^{-x})\, K_{\i u}(e^{-y})\,\frac{d u}{|\Gamma(\i u)|^2},
\end{multline*}
 which ends the proof by \eqref{Yak}.
\end{proof}
\subsection{Proofs of Theorems \ref{Thm:2} and \ref{Thm:3}}
Both proofs are very similar.
\begin{proof}[Proof of Theorem \ref{Thm:2}]
Denote by $\{X_k\topp N\}$ the Markov chain from the conclusion of Theorem \ref{Thm:1} with $\rho_0=e^{-\C/\sqrt{N}}$.
  For fixed $t_0=0<t_1<\dots<t_d$ and $x_0,\dots,x_d> 0$ by
  \eqref{ini-gamma}, \eqref{toprove1}  of Theorem \ref{Thm-loc-lim}
   and the Markov property, we have
    \begin{multline*}
      \lim_{N\to\infty}  N^{(d+1)/2} \,\Pr\pp{\frac{X_{\floor{t_iN}}\topp N}{\sqrt{N}}=\tfrac{\floor{x_i\sqrt{N}}}{\sqrt{N}},\,i=0,1,\ldots,d}
=  \C^2e^{-\C x_0}\,x_d\,\prod_{j=1}^d\,\g_{\frac{t_j-t_{j-1}}{1+\sigma}}(x_{j-1},x_j),
    \end{multline*}
 where  we recall that $\g_t$ is defined in \eqref{BMhit0}.   %
    This proves local  convergence to  the  joint density
  of the vector $(\xi_0,\xi_{t_1},\dots,\xi_{t_d})$,   see \eqref{xi-tr} and \eqref{xi0}.

By \cite[Theorem 3.3]{billingsley99convergence},
the convergence of finite-dimensional distributions as stated in \eqref{12conv} follows.
\end{proof}
\begin{proof}[Proof of Theorem \ref{Thm:3}]
Denote by $\{X_k\topp N\}$ the Markov chain from the conclusion of Theorem \ref{Thm:1} with $\rho_0=e^{-\C/\sqrt{N}}$ and $q=e^{-2/\sqrt{N}}$.

Clearly, the centering sequence on the left-hand side of \eqref{13conc} in Theorem \ref{Thm:3} can be replaced by $\floor{\sqrt{N}\log\sqrt{2N(1+\sigma)}}$, $N\ge 1$.

  For fixed $t_0=0<t_1<\dots<t_d$ and $x_0,\dots,x_d\in\RR$ by
  \eqref{ini-K}, \eqref{toprove*} of Theorem \ref{Thm-q=1}    and the Markov property we have
       \begin{multline*}
      \lim_{N\to\infty} N^{\frac{d+1}2} \Pr\pp{\frac{X_{\floor{t_iN}}\topp N-\floor{\sqrt{N}\log \sqrt{2 N(1+\sigma)}}}{\sqrt{N}}=\frac{\floor{x_i\sqrt{N}}}{\sqrt{N}},\,i=0,1,\ldots,d}
      \\\stackrel{\eqref{zN}}{=}
    \lim_{N\to\infty} N^{\frac{d+1}2} \Pr\pp{X_{\floor{t_iN}}\topp N=\Jz{x_i}{N},\,i=0,1,\ldots,d}
= \frac{4}{2^\C \Gamma(\C/2)^2} K_0(e^{-x})\prod_{j=1}^d \p_{\frac{t_j-t_{j-1}}{1+\sigma}}(x_{j-1},x_j),
    \end{multline*}
 where  we recall that $\p_t$ is defined in \eqref{Yak}.   This proves local  convergence to  the  joint density  of the vector $(\zeta_0,\zeta_{t_1},\dots,\zeta_{t_d})$, see \eqref{zeta-tr} and \eqref{zeta0}.

By \cite[Theorem 3.3]{billingsley99convergence} the convergence of finite-dimensional distributions as stated in  \eqref{13conc} follows.
\end{proof}

\section{Auxiliary results on special functions}\label{Sec:SF}

\subsection{Some useful limits for $q\nearrow 1$}\label{sec:Ramanujan}

 The following result appears in Ch. 16 of Ramanujan's notebook, see \cite[Entry 1]{adiga1985ramanujan}, \cite[ p.  13, Entry 1(i)]{berndt-ramanujan} or \cite[(I34)]{gasper2004basic} for the statement. The proof in \cite[Proposition A.2]{koornwinder1990jacobi} is for real $\la$, which is not enough for our purposes. The proof in \cite[ p.  13]{berndt-ramanujan} uses analytic continuation from $|z|<1$.  We note that we always use the principal branch of the logarithm and of the power function.
\begin{lemma}[Ramanujan]\label{Lem:Ram} For all $\lambda \in {\mathbb C}$ and $z$ in cut complex plane ${\mathbb C}\setminus [1,\infty)$ we have
\begin{equation}\label{eq6}
\frac{(z;q)_{\infty}}{(z q^{\lambda}; q)_{\infty}} \to (1-z)^{\lambda}
\end{equation}
as $q\to 1^-$. The convergence is uniform in $z$ on compact subsets of
${\mathbb C}\setminus [1,\infty)$.
\end{lemma}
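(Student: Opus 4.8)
The plan is to prove the limit first on the open unit disk $\{|z|<1\}$ by a direct computation with the logarithmic series, and then to propagate it to the whole cut plane by a normal-families (Vitali--Porter) argument. For $|z|<1$ every factor $1-zq^k$ and $1-zq^{k+\lambda}$ lies away from $(-\infty,0]$, so I would take the principal logarithm of the infinite product and expand each term using $\log(1-w)=-\sum_{n\ge1}w^n/n$. After interchanging the two summations (legitimate by absolute convergence when $|z|<1$, for $q$ close enough to $1$ so that $|zq^\lambda|<1$ as well), the double sum collapses to
\[
\log\frac{(z;q)_\infty}{(zq^\lambda;q)_\infty}=-\sum_{n=1}^\infty\frac{z^n}{n}\,\frac{1-q^{n\lambda}}{1-q^n}.
\]
The key elementary fact is that $\tfrac{1-q^{n\lambda}}{1-q^n}\to\lambda$ as $q\to1^-$ for each fixed $n$. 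To pass the limit through the sum I would dominate the general term: using $|e^w-1|\le|w|e^{|w|}$ together with $1-e^{-v}\ge ve^{-v}$ gives $\left|\tfrac{1-q^{n\lambda}}{1-q^n}\right|\le|\lambda|\,q^{-n(|\lambda|+1)}$, so for $q$ near $1$ the term is bounded by $\tfrac{|\lambda|}{n}\rho^n$ with a fixed $\rho<1$; dominated convergence then yields $-\lambda\sum_n z^n/n=\lambda\log(1-z)$, uniformly on compact subsets of the disk.

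To reach the cut plane I would invoke the Vitali--Porter theorem for the family $g_q(z):=(z;q)_\infty/(zq^\lambda;q)_\infty$. Two ingredients are needed. First, holomorphy on the relevant sets: the only possible singularities of $g_q$ are the zeros $z=q^{-k-\lambda}$ $(k\ge0)$ of the denominator, and these all lie on the single ray $\arg z=\mathrm{Im}(\lambda)(-\log q)$ with modulus $\ge q^{-\mathrm{Re}(\lambda)}$. As $q\to1^-$ this discrete set collapses onto $[1,\infty)$, so for any compact $K\subset\mathbb{C}\setminus[1,\infty)$ there is $q_0<1$ with $g_q$ holomorphic on a neighbourhood of $K$, and with $\mathrm{dist}(K,\{\text{poles}\})\ge\eta>0$, for all $q\in(q_0,1)$. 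Second, local uniform boundedness of $\{g_q\}$ on $K$, uniform in $q\in(q_0,1)$. Granting these, since $g_q\to(1-z)^\lambda$ already on the unit disk---a subset of the domain having accumulation points---Vitali--Porter upgrades pointwise convergence to locally uniform convergence on all of $\mathbb{C}\setminus[1,\infty)$, which is the assertion.

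The main obstacle is exactly this uniform bound, since the convenient logarithmic series diverges once $|z|\ge1$. I would extract it directly from the product by writing each factor as $\tfrac{1-zq^k}{1-zq^{k+\lambda}}=1-\eta_k$ with $\eta_k=\tfrac{zq^k(1-q^\lambda)}{1-zq^{k+\lambda}}$, so that the crucial cancellation $1-q^\lambda=O(1-q)$ is built in, and then splitting $\sum_k\log(1-\eta_k)$ at $K_0\sim\log(2|z|)/(-\log q)$. For $k\ge K_0$ the point $zq^{k}$ is small, $|1-zq^{k+\lambda}|\ge\tfrac12$, and $|\eta_k|\le C|z||\lambda|(1-q)q^k$, whose tail sum is $\le C|z||\lambda|(1-q)\cdot q^{K_0}/(1-q)=O(1)$. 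For $k<K_0$ the pole-avoidance bound $|1-zq^{k+\lambda}|=q^{\mathrm{Re}(\lambda)}q^k|z-q^{-k-\lambda}|\ge\eta\,q^{\mathrm{Re}(\lambda)}q^k$ makes the $q^k$ cancel, so $|\eta_k|\le C'(1-q)$ uniformly in $k$, and the $K_0\sim(-\log q)^{-1}$ head terms contribute $O\!\big(K_0(1-q)\big)=O(1)$. Summing the two pieces shows $|\log g_q(z)|$, hence $|g_q(z)|$, is bounded on $K$ uniformly in $q$, as required. (With additional bookkeeping the very same head/tail analysis would also deliver the limit on the cut plane directly, by computing the $q\to1^-$ limits of both blocks; routing through Vitali is preferable precisely because it only asks for boundedness and keeps the estimates to a minimum.)
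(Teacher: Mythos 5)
Your argument is correct, but it follows a genuinely different route from the paper's. You first identify the limit explicitly on the unit disk by collapsing $\log\bigl((z;q)_\infty/(zq^\lambda;q)_\infty\bigr)$ into the series $-\sum_n \tfrac{z^n}{n}\tfrac{1-q^{n\lambda}}{1-q^n}$ and passing to the limit term by term, and you then transport the convergence to the whole cut plane by Vitali--Porter, applied to the functions $g_q$ themselves; the price is the head/tail product estimate needed for local uniform boundedness of $g_q$. The paper instead applies the normal-families argument to the \emph{logarithmic derivative} $h_q=f_q'/f_q$: the pointwise limit $h_q(z)\to-\lambda/(1-z)$ comes out of a Riemann-sum computation, the uniform bound $|h_q|\le C|\lambda|\delta^{-2}$ is a one-line consequence of the pole-avoidance estimate $|1-zq^{n+\lambda}|,|1-zq^n|\ge\delta$ (the $D_\epsilon$ geometry, which plays the same role as your pole-location analysis), and the statement for $f_q$ is recovered by integrating and exponentiating. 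Your route is in the spirit of the classical proof the authors cite from Berndt (analytic continuation from $|z|<1$), and it buys a more explicit identification of the limit on the disk at the cost of the more delicate splitting at $K_0\sim\log(2|z|)/(-\log q)$; the paper's route concentrates all the work in one clean bound on $h_q$ but then needs the extra integration step $f_q=\exp\bigl(\int_0^z h_q\bigr)$. Two small points worth tightening in a written version: the pole set does not literally ``collapse onto $[1,\infty)$'' (poles of large modulus can drift far from the real axis when $\Im\lambda\neq0$), but only the poles of modulus $\le R+1$ matter for a compact $K\subset\{|z|\le R\}$, and those do accumulate on $[1,\infty)$; and for $\Re\lambda<0$ the denominator $(zq^\lambda;q)_\infty$ has finitely many zeros \emph{inside} the unit disk, so the disk computation should be stated on a fixed compact subdisk $|z|\le r<1$ with $q$ close enough to $1$ that $|zq^\lambda|<1$ there --- which is exactly the caveat you already flag.
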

\begin{proof}
Denote the left-hand side of \eqref{eq6} by $f_q(z)$.
Then
\begin{multline*}
  h_q(z):=\frac{f_q'(z)}{f_q(z)}=\sum\limits_{n\ge 0}
\bigg[ \frac{q^{\lambda+n}}{1-zq^{\lambda + n}}-
\frac{q^{n}}{1-zq^{ n}} \bigg]
=-\frac{1-q^{\lambda}}{1-q} \times (1-q)\sum\limits_{n\ge 0}
\frac{{q^n}}{(1-zq^{\lambda + n})(1-zq^{ n})}.
\end{multline*}
Thus, as $q\to 1^-$ we have
$$
h_q(z) \to - \lambda \int_0^1 \frac{d y}{(1-zy)^2}=- \frac{\lambda}{1-z}.
$$
This convergence clearly holds pointwise, for all $z\in {\mathbb C}\setminus [1,\infty)$.
To prove uniform convergence on compact subsets, we use Montel's theorem, which requires us to show that the functions
$h_{q}$ are uniformly bounded on compact subsets of ${\mathbb C}\setminus [1,\infty)$ as $q\to 1^-$.
For small $\epsilon>0$, we define $D_{\epsilon}$ to be the closed domain
$$
D_{\epsilon}=\{z \in \c : |\arg(z)|\le \epsilon\} \setminus \{z \in \c : |z|<1-\epsilon\}.
$$
The domain $D_{\epsilon}$ is shown in Figure \ref{fig_D_epsilon}.

\begin{figure}[tb]

\usetikzlibrary{patterns}
   \begin{tikzpicture}[scale=3.5].

  \draw (5mm,0mm) arc [start angle=0, end angle=20, radius=5mm];
 \node[left] at (.5,0.075) { $\epsilon$};
  \draw[->] (0,-1.1) to (0,1.1);
 \draw[->] (-1.2,0) to (1.8,0);
 \node[left] at (0,1.1) {  $\i y$};
  \node[below] at (1.8,0) { $x$};
  \node at (1.2,.2) {   $D_{\epsilon}$};

  \draw [fill=black] (0,.7) circle [radius=0.03];
  \node[above] at (0.2,0.7) {  $1-\epsilon$};
     \draw [fill=black] (.7,0) circle [radius=0.03];
   \node[right] at (0.65,-.45) {  $1-\epsilon$};
     \draw[->,thick] (0.65,-.45) to [out=180,in=225] (0.65,-0.05);

     \draw[-,dashed] (0,0) to (0.657785, 0.239414);
       \draw[-,dashed] (0,0) to (0.657785, -0.239414);
 \draw [dashed] (0,0) circle [radius=0.7];
    \draw[-,dashed] (1,-.05) to (1,.05);
 \node[below] at (1.,0) {  $1$};

  \draw[thick] (0.657785, -0.239414) arc [start angle=-20, end angle=20, radius=.7];
  \draw[-,thick] (0.657785, -0.239414) to (1.59748, -0.581434);
   \draw[-,thick] (0.657785, 0.239414) to (1.59748, 0.581434);
                  \fill[pattern=north west lines, pattern color=gray!60] (0.657785, -0.239414) arc [radius=.7, start angle=-20, delta angle=40]
                  -- (1.59748, 0.581434) arc [radius=1.7, start angle=20, delta angle=-40]
                  -- cycle;
\end{tikzpicture}
 \caption{Domain $D_{\epsilon}$.
  \label{fig_D_epsilon}}
\end{figure}
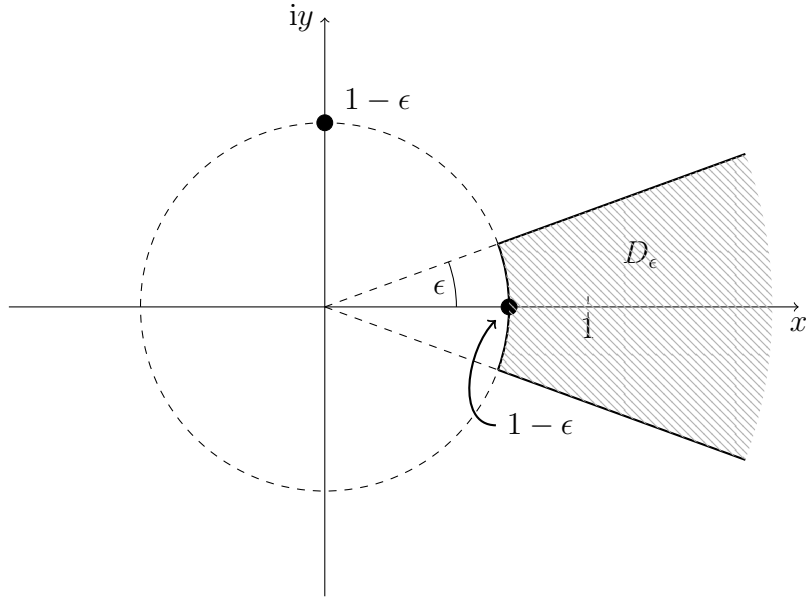
Now we take an arbitrary compact set $K \subset {\mathbb C} \setminus [1,\infty)$. There exists $\epsilon>0$ small enough such that $K \subset {\mathbb C} \setminus D_{\epsilon}$.
Since $q\in[0,1)$, for any $z\in K$ (in fact, for any $z\not\in D_\epsilon$) and any $n=0,1,\dots$ we have  $q^n z\in {\mathbb C} \setminus D_{\epsilon}\subset  {\mathbb C} \setminus D_{\epsilon/2}$.

Next, noting that $\lim_{q\to 1^-} q^\la=1$, there is   $q_0 \in (0,1)$ such that
$|q^{\lambda}|<1-\epsilon/2$  and $|\arg(q^\la)|<\epsilon/2$  for all $q \in (q_0, 1)$. This implies that for any  $z\not\in D_\epsilon$  we have   $q^\la z \in  {\mathbb C} \setminus D_{\epsilon/2}$. Indeed,
 if $z\not\in D_\epsilon$ then either $|z|<1-\epsilon$ or $|\arg(z)|>\epsilon$. In the first case, we have
 $|z q^\la|<(1-\epsilon)(1+\epsilon/2)<1-\epsilon/2$. In the second case,  $|\arg(q^\la z)|\geq |\arg(z)|-|\arg(q^\la)|>\epsilon/2$.

Therefore, we have shown that there exists $q_0$ such that if $z\in K$ then, with $q^n z\not\in D_\epsilon$, both  $q^n z$ and $q^{n+\la} z$ are in ${\mathbb C} \setminus D_{\epsilon/2}$ for all $n=0,1,\dots$ and all $q\in(q_0,1)$.
But the distance from $1$ to the set ${\mathbb C} \setminus D_{\epsilon/2}$ is strictly positive.
This means that there is $\delta>0$ such that for all  $z\in K$,
 $q\in (q_0,1)$  and every $n\ge 0$ we have
$$
|1-zq^{\lambda + n}|\ge \delta, \;\;\; |1-zq^{n}|\ge \delta,
$$
which implies an upper bound
$$
|h_q(z)|<\frac{|1-q^{\lambda}|}{|1-q|} \times (1-q)
\sum\limits_{n\ge 0} q^n \delta^{-2}= \frac{|1-q^{\lambda}|}{|1-q|} \times  \delta ^{-2}< C |\lambda| \delta^{-2},
$$
for some constant $C>0$.

Thus we have proved that the functions $h_q(z)$ converge to $-\lambda/(1-z)$ as
$q\to 1^-$, uniformly on compact subsets of ${\mathbb C}\setminus [1,\infty)$.
Then, integrating over  the segment from $0$ to $z$

$$
f_q(z)=\exp\Big( \int_0^z h_q(w) d w\Big) \to (1-z)^{\lambda}
$$
as $q\to 1^-$, also uniformly on compact subsets of ${\mathbb C}\setminus [1,\infty)$.
\end{proof}

Recall \cite[Section 1.10]{gasper2004basic} that for $z\in\CC\setminus\{0,-1,-2,\dots\}$ and $q\in[0,1)$,  the $q$-Gamma function is defined by

   \begin{equation}
    \label{q-Gamma}
    \Gamma_q(z) :=  (1-q)^{1-z} \;\frac{(q;q)_\infty}{(q^z;q)_\infty}.
    \end{equation}

The following result appears in  Ch. 16 of  Ramanujan notebook. For the statement, see  \cite[Entry 1]{adiga1985ramanujan},
or \cite[ p.  13, Entry 1(ii)]{berndt-ramanujan}. For the proof, see  \cite[Theorem B.2]{koornwinder1990jacobi}.
\begin{lemma}\label{L:Ram2} If $z\ne 0, -1,-2,\dots$ then
    \begin{equation}\label{qG2G}
       \lim_{q\nearrow 1}\Gamma_q(z)=\Gamma(z).
    \end{equation}

\end{lemma}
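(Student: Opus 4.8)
The plan is to prove the stronger statement that \eqref{qG2G} holds, locally uniformly, for every $z$ with $\re(z)>0$, and then to propagate it to all $z\notin\{0,-1,-2,\dots\}$ by the functional equation. I first note that Ramanujan's Lemma~\ref{Lem:Ram} does not apply directly here: in \eqref{q-Gamma} the relevant bases are the numbers $q^{z+n}$, all of which tend to the boundary point $1$ of the domain of validity of \eqref{eq6} as $q\nearrow 1$, so a separate argument is needed.

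Write $q=e^{-t}$ with $t=-\log q\to 0^+$. Taking logarithms in \eqref{q-Gamma} and using $\log(1-e^{-a})=-\sum_{k\ge1}e^{-ka}/k$ (valid for $\re(a)>0$, in particular for $a=t(n+1)$ and $a=t(n+z)$ when $\re(z)>0$) gives
\[
\log\Gamma_q(z)=(1-z)\log(1-e^{-t})+\sum_{n=0}^\infty\Big[\log(1-e^{-t(n+1)})-\log(1-e^{-t(n+z)})\Big].
\]
The key step is to interchange the order of summation in $n$ and $k$ (justified by Tonelli, since for $\re(z)>0$ the resulting double series converges absolutely) and to evaluate the geometric sum $\sum_{n\ge0}e^{-ktn}=(1-e^{-kt})^{-1}$. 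After absorbing the prefactor $(1-z)\log(1-e^{-t})=(z-1)\sum_{k\ge1}e^{-kt}/k$, the expression collapses to
\[
\log\Gamma_q(z)=\sum_{k=1}^\infty t\,h(kt),\qquad h(u):=\frac1u\Big[(z-1)e^{-u}+\frac{e^{-uz}-e^{-u}}{1-e^{-u}}\Big].
\]
I would then recognize the right-hand side as the Riemann sum, with mesh $t$ and nodes $u=kt$, of Binet's first integral $\log\Gamma(z)=\int_0^\infty h(u)\,du$, valid for $\re(z)>0$.

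It remains to justify that this infinite-domain Riemann sum converges to its integral as $t\to0^+$. The point is that $h$ extends continuously to $u=0$ — the bracket in $h$ is $O(u)$ near $0$, because the singular parts of $(z-1)e^{-u}$ and $(e^{-uz}-e^{-u})/(1-e^{-u})$ cancel — while $h(u)$ decays exponentially as $u\to\infty$; hence $|h(u)|\le C e^{-cu}$ with $C,c$ locally uniform in $z$ on compact subsets of $\{\re(z)>0\}$. A dominated-convergence / uniform-integrability estimate then yields $\sum_{k\ge1}t\,h(kt)\to\int_0^\infty h(u)\,du$, establishing \eqref{qG2G} for $\re(z)>0$.

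Finally, for arbitrary $z\neq 0,-1,-2,\dots$ I would descend using the functional equation $\Gamma_q(z)=\Gamma_q(z+1)/[z]_q$ with $[z]_q=(1-q^z)/(1-q)\to z$, which matches the classical $\Gamma(z)=\Gamma(z+1)/z$, inducting on $\lceil -\re(z)\rceil$ to cover every vertical strip while avoiding the excluded poles. I expect the main obstacle to be the clean justification of the Riemann-sum limit — specifically producing a $t$-uniform integrable dominating function near $u=0$, where the delicate cancellation that makes $h$ bounded must be made quantitative; the algebraic rearrangement that reveals the Riemann-sum structure is the decisive, and essentially elementary, step.
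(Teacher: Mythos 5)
Your argument is correct, but it is genuinely different from what the paper does: the paper supplies no proof of Lemma~\ref{L:Ram2} at all, deferring entirely to \cite[Theorem~B.2]{koornwinder1990jacobi} (with pointers to Ramanujan's notebook for the statement). Your proposal is a self-contained substitute. The chain of steps checks out: for $\re(z)>0$ the expansion $\log(1-e^{-a})=-\sum_{k\ge1}e^{-ka}/k$ applies to every factor of \eqref{q-Gamma}, the $n$--$k$ interchange is covered by absolute convergence (the double sum of moduli is $\sum_k k^{-1}(e^{-kt\re z}+e^{-kt})/(1-e^{-kt})<\infty$), the geometric summation in $n$ together with the prefactor $(1-z)\log(1-q)$ collapses $\log\Gamma_q(z)$ exactly to $\sum_{k\ge1}t\,h(kt)$ with your $h$, and the target integral $\int_0^\infty h(u)\,du=\log\Gamma(z)$ is the classical Malmsten/Gauss representation (what you call Binet's first integral; DLMF 5.9.16). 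The cancellation making $h$ continuous at $0$ (one finds $h(0)=(z-1)(z-2)/2$) plus the exponential decay $|h(u)|\le Ce^{-cu}$ make the Riemann-sum limit routine, and the descent via $\Gamma_q(z)=\Gamma_q(z+1)/[z]_q$ with $[z]_q\to z$ handles $\re(z)\le 0$ away from the excluded points. One cosmetic caveat: the sum of principal logarithms need not be the principal $\log\Gamma_q(z)$, but since you only exponentiate at the end this is harmless. What your route buys is a short, elementary, fully written-out proof that also delivers local uniformity on $\{\re(z)>0\}$ (and hence, via the functional equation, near any non-pole); what the citation buys the authors is brevity and a reference that already records uniform convergence on compacta. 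Either is acceptable for the role the lemma plays in Section~\ref{Sec:SF}.
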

We will also need the following.

\begin{lemma}\label{Fact2} Let $q=e^{-2/M}$.
For any $c>0$ the function
$$
 t  \mapsto |\Gamma_q(c+\i t)|
$$
is periodic with period $M\pi$. It is increasing for $t\in [-M \pi/2,0]$ and decreasing for $t\in [0,M\pi/2]$.
\end{lemma}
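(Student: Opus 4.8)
The plan is to read off the entire $t$-dependence of $|\Gamma_q(c+\i t)|$ from the definition \eqref{q-Gamma} and reduce the claim to an elementary monotonicity statement about a single trigonometric quantity. First I would write, using $q=e^{-2/M}\in(0,1)$,
$$
\Gamma_q(c+\i t)=(1-q)^{1-c-\i t}\,\frac{(q;q)_\infty}{(q^{c+\i t};q)_\infty}.
$$
Since $1-q$ and $(q;q)_\infty$ are real and positive and $|(1-q)^{-\i t}|=|e^{-\i t\log(1-q)}|=1$, taking absolute values gives
$$
|\Gamma_q(c+\i t)|=\frac{(1-q)^{1-c}\,(q;q)_\infty}{|(q^{c+\i t};q)_\infty|}.
$$
Thus all the $t$-dependence sits in the denominator, and it suffices to analyze $|(q^{c+\i t};q)_\infty|$.

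Next I would expand this denominator as an infinite product. Writing $q^{\i t}=e^{-2\i t/M}$, so that $q^{c+\i t}q^k=q^{c+k}e^{-2\i t/M}$, and using $|1-r e^{-\i\varphi}|^2=1-2r\cos\varphi+r^2$ with $\varphi=2t/M$ and $r=q^{c+k}>0$, I get
$$
|(q^{c+\i t};q)_\infty|^2=\prod_{k=0}^{\infty}\Bigl(1-2q^{c+k}\cos(2t/M)+q^{2(c+k)}\Bigr).
$$
The product converges absolutely and uniformly in $t$ because $q^{c+k}$ decays geometrically, which legitimizes treating it factor-by-factor. From this formula the periodicity is immediate: the right-hand side depends on $t$ only through $\cos(2t/M)$, which has period $M\pi$, and the prefactors are constant in $t$; hence $t\mapsto|\Gamma_q(c+\i t)|$ has period $M\pi$.

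Finally I would extract the monotonicity. Viewing each factor $g_k(s)=1-2q^{c+k}s+q^{2(c+k)}$ as a function of $s=\cos(2t/M)\in[-1,1]$, it is strictly positive (indeed $g_k(s)\ge(1-q^{c+k})^2>0$) and strictly decreasing in $s$ since its slope $-2q^{c+k}$ is negative. Therefore the product $|(q^{c+\i t};q)_\infty|^2$ is a decreasing function of $s=\cos(2t/M)$, and consequently $|\Gamma_q(c+\i t)|$ is an \emph{increasing} function of $\cos(2t/M)$. On $[0,M\pi/2]$ the angle $2t/M$ runs over $[0,\pi]$, so $\cos(2t/M)$ decreases and $|\Gamma_q(c+\i t)|$ decreases; on $[-M\pi/2,0]$ the angle runs over $[-\pi,0]$, so $\cos(2t/M)$ increases and $|\Gamma_q(c+\i t)|$ increases, as claimed. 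There is no real obstacle here: the only technical point is justifying that monotonicity of the modulus follows from term-by-term monotonicity of the product, which is routine given its uniform convergence, so a direct argument on $s=\cos(2t/M)$ avoids any need to differentiate the infinite product.
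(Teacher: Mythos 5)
Your proof is correct and follows essentially the same route as the paper: both reduce to the factorization $|1-q^{n+c+\i t}|^2=1+q^{2(n+c)}-2q^{n+c}\cos(2t/M)$ and read off periodicity and monotonicity of each factor from its dependence on $\cos(2t/M)$, then pass to the infinite product. Your explicit substitution $s=\cos(2t/M)$ is just a slightly more formal phrasing of the paper's geometric observation that $q^{n+c+\i t}$ traverses a circle of radius $q^{n+c}$.
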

\begin{proof}Referring to \eqref{q-Gamma} we note that
$$
|\Gamma_q(c+\i t)|=(1-q)^{1-c} (q;q)_{\infty} \prod\limits_{n\ge 0} \frac{1}{|1-q^{n+c+\i t}|}.
$$
As $t$ changes over the interval $[-M\pi/2, M\pi/2]$, the point $w=w(t)=q^{n+c+\i t}$ goes around a circle $|w|=q^{n+c}$, thus $1-w$ goes around a circle centered at $1$ and having radius $r=q^{n+c}$.
Thus, the function $$ t\in\RR \mapsto |1-w(t)|=\sqrt{1+q^{2(n+c)}-2q^{n+c}\cos(\tfrac{2t}{M})}$$ is periodic with period $M\pi$, has the minimum at $t=0$ and the maximum at $t=\pm M \pi/2$ and is clearly monotone between these points, which means that for every $n\ge 0$, the function
$$
t \in [-M\pi/2, M\pi/2]  \mapsto \frac{1}{|1-q^{n+c+\i t}|}
$$
is increasing on $[-M \pi/2,0]$ and decreasing on $[0,M\pi/2]$. Taking the product of positive increasing/decreasing functions gives us an increasing/decreasing function. \end{proof}

\subsection{Bounds on $\Gamma_q$}\label{Sec:Bds-Gq}
Following numerous references, such as  \cite{corwin-knizel2021stationary,Zhang2008} we will use the Jacobi theta functions to derive the bounds  on  $\Gamma_q$ that we need.

We assume that $v, \tau\in \c$, $\im(\tau)>0$ and we set $q:=e^{\pi \i \tau}$.
The Jacobi theta functions are defined as
\begin{align}
\label{def_theta1}
{\theta}_1(v|\tau)&=2 q^{1/4} \sum\limits_{n\ge 0} (-1)^{n} q^{n(n+1)} \sin((2n+1) \pi v),\\
\label{def_theta4}
{\theta}_4(v | \tau)&=1+2 \sum\limits_{n\ge 1} (-1)^n q^{n^2 }\cos(2 n \pi v)
.
\end{align}
We will need the following modular transformations of theta functions ${\theta}_1(v|\tau)$ and ${\theta}_4(v|\tau)$.
\begin{align}
\label{theta_1_4}
{\theta}_1(v|\tau)&= \i q^{1/4} e^{-\pi \i v} {\theta}_4(v-\tfrac{\tau}{2} | \tau),\\
\label{theta_1_over_tau}
{\theta}_1(v|\tau) &=\i \sqrt{\frac{\i}{\tau}}
e^{-\pi \i v^2/\tau} {\theta}_1( \tfrac{v}{\tau} | -\tfrac{1}{\tau}).
\end{align}
 The Jacobi's triple product identity for ${\theta}_1(v|\tau)$ is
  \begin{equation}
    \label{theta1_product}
{\theta}_1(v|\tau)=2 q^{1/4} \sin(\pi v) \prod\limits_{n\ge 1} (1-q^{2n})(1-q^{2n}e^{2\pi \i v})
(1-q^{2n}e^{-2\pi \i v}).
  \end{equation}
  That is,
    \begin{equation}
    \label{theta1_product+}
{\theta}_1(v|\tau)=2 q^{1/4} \sin(\pi v) (q^2,q^2 e^{2\pi \i v}, q^2e^{-2 \pi \i v};q^2)_\infty. \\
  \end{equation}

  All of these results can be found in \cite[Chapter 10]{Rademacher-1973}.

The next results also require the following elementary bounds:
for all $u\in \r$
\begin{equation}\label{sinh_bounds}
   e^{-|u|} \le  \frac{u}{\sinh(u)} \le  10 e^{-4|u|/5}
   \;\; {\textnormal{ and }} \;\;  \Big| \frac{u}{1-e^u} \Big| \le 1+|u|.
\end{equation}
We leave their proofs to the reader.

\begin{lemma}\label{lemma_q_Gamma_bound2}
Let $q=e^{-2/M}$. There exists $M^*>0$ and a universal constant $0<C<\infty$  such that
\begin{equation}\label{q_Gamma_bound2}
\frac{1}{C} e^{ -\frac{\pi}{2}|x|} <|\Gamma_q(1+\i x)|< C e^{-\frac{3 \pi}{20} |x|}
\end{equation}
for all   $M\ge M^*$ and all $x \in [-\pi M/2, \pi M/2]$.
\end{lemma}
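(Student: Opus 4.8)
The plan is to express $|\Gamma_q(1+\i x)|$ through Jacobi theta functions, then apply the modular transformation $\tau\mapsto-1/\tau$ to trade the nome $q=e^{-2/M}$ (which tends to $1$, the delicate regime) for an exponentially small nome, and finally read the two bounds off the leading term of the transformed series. Throughout I may assume $x\ge0$: since $q$ is real, \eqref{q-Gamma} gives $\Gamma_q(\overline z)=\overline{\Gamma_q(z)}$, so $x\mapsto|\Gamma_q(1+\i x)|$ is even.

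First I would set $\tau=\i/(\pi M)$ and $v=x/(\pi M)$, so that the theta-nome $e^{\pi\i\tau}=e^{-1/M}$ has square $e^{-2/M}=q$. From \eqref{q-Gamma}, and since $(1-q)^{1-(1+\i x)}=(1-q)^{-\i x}$ has modulus $1$ (because $1-q\in(0,1)$), one has $|\Gamma_q(1+\i x)|=(q;q)_\infty/|(q^{1+\i x};q)_\infty|$. Because $q^{1+\i x}=q\,e^{-2\pi\i v}$ and $x$ is real, the triple product \eqref{theta1_product+} reads
\[
\theta_1(v|\tau)=2e^{-1/(4M)}\sin(x/M)\,(q;q)_\infty\,|(q^{1+\i x};q)_\infty|^2 ,
\]
while differentiating \eqref{theta1_product+} at $v=0$ gives $\theta_1'(0|\tau)=2\pi e^{-1/(4M)}(q;q)_\infty^3$. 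Eliminating the Pochhammer symbols between these two identities (the factor $e^{-1/(4M)}$ cancels) yields the clean formula
\[
|\Gamma_q(1+\i x)|^2=\frac{\sin(x/M)}{\pi}\,\frac{\theta_1'(0|\tau)}{\theta_1(v|\tau)} .
\]
Next I apply the modular transformation \eqref{theta_1_over_tau}, together with its $v$-derivative at $v=0$ (where $\theta_1(0|\cdot)=0$ removes the extra term), to rewrite both theta values at the nome $\tilde q=e^{\pi\i(-1/\tau)}=e^{-\pi^2 M}$. Using $\i/\tau=\pi M$, $v/\tau=-\i x$ and $-\pi\i v^2/\tau=-x^2/M$, one gets $\theta_1(v|\tau)=\i\sqrt{\pi M}\,e^{-x^2/M}\theta_1(-\i x|-1/\tau)$ and $\theta_1'(0|\tau)=(\pi M)^{3/2}\theta_1'(0|-1/\tau)$; substituting the series \eqref{def_theta1} and using $\sin(-\i u)=-\i\sinh u$ leads to
\[
|\Gamma_q(1+\i x)|^2=\pi M\,\sin(x/M)\,e^{x^2/M}\,\frac{S_1}{S_2},
\]
where $S_1=\sum_{n\ge0}(-1)^n\tilde q^{\,n(n+1)}(2n+1)$ and $S_2=\sum_{n\ge0}(-1)^n\tilde q^{\,n(n+1)}\sinh((2n+1)\pi x)$.

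The main obstacle is to show that $S_1$ and $S_2$ are governed by their $n=0$ terms uniformly over the whole range $x\in[0,\pi M/2]$: although $\sinh((2n+1)\pi x)$ is enormous for $x$ near $\pi M/2$, the factor $\tilde q^{\,n(n+1)}=e^{-\pi^2Mn(n+1)}$ is smaller, and the constraint $x\le\pi M/2$ is precisely what balances them. Concretely, I would use the elementary identity $\sinh((2n+1)u)/\sinh u=\sum_{j=0}^{2n}e^{(2n-2j)u}\le(2n+1)e^{2nu}$, so that with $u=\pi x$ and $2n\pi x\le n\pi^2M$ one has $\tilde q^{\,n(n+1)}\,\sinh((2n+1)\pi x)/\sinh(\pi x)\le(2n+1)e^{-\pi^2Mn^2}$. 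Summing over $n\ge1$ gives $S_1=1+O(\tilde q^{\,2})$ and $S_2=\sinh(\pi x)\,(1+O(e^{-\pi^2M}))$ with constants independent of $x$; hence for $M$ large $S_1/S_2$ equals $1/\sinh(\pi x)$ up to a factor bounded above and below by absolute constants, and in particular $S_2>0$.

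Finally I extract the bounds. Up to bounded factors one has
\[
|\Gamma_q(1+\i x)|^2\asymp \frac{\sin(x/M)}{x/M}\cdot\frac{\pi x\,e^{x^2/M}}{\sinh(\pi x)} ,
\]
and on $[0,\pi M/2]$ we have $\frac{\sin(x/M)}{x/M}\in[2/\pi,1]$ and $x^2/M=x\cdot(x/M)\le \pi x/2$. For the upper bound I combine $\tfrac{\pi x}{\sinh(\pi x)}\le 10e^{-4\pi x/5}$ from \eqref{sinh_bounds} with $e^{x^2/M}\le e^{\pi x/2}$, producing the exponent $-\tfrac{4\pi x}{5}+\tfrac{\pi x}{2}=-\tfrac{3\pi x}{10}$, i.e. $|\Gamma_q(1+\i x)|\le Ce^{-3\pi x/20}$. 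For the lower bound I combine $\tfrac{\pi x}{\sinh(\pi x)}\ge e^{-\pi x}$ with $e^{x^2/M}\ge1$ and $\tfrac{\sin(x/M)}{x/M}\ge2/\pi$, giving $|\Gamma_q(1+\i x)|^2\ge c\,e^{-\pi x}$, i.e. $|\Gamma_q(1+\i x)|\ge \tfrac1C e^{-\pi x/2}$. These are exactly the claimed constants, and by evenness they extend to all $x\in[-\pi M/2,\pi M/2]$, which completes the argument for $M$ large enough.
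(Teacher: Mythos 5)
Your proof is correct, and it follows the same strategy as the paper's: represent $|\Gamma_q(1+\i x)|^2$ via the triple product for $\theta_1$ at nome $e^{-1/M}$ (your clean formula is the paper's \eqref{Theta2Gammaq} in disguise), apply the modular transformation \eqref{theta_1_over_tau} to pass to the exponentially small nome $e^{-\pi^2 M}$, control the tail of the transformed series uniformly over $|x|\le \pi M/2$ by balancing $\sinh((2n+1)\pi x)$ against $e^{-\pi^2 M n(n+1)}$, and finish with the bounds \eqref{sinh_bounds} together with $x^2/M\le \pi|x|/2$ and $\sin(x/M)/(x/M)\in[2/\pi,1]$. The one genuine difference is your normalization by $\theta_1'(0|\tau)$: writing $|\Gamma_q(1+\i x)|^2=\tfrac{\sin(x/M)}{\pi}\,\theta_1'(0|\tau)/\theta_1(v|\tau)$ and transforming both numerator and denominator cancels $(q;q)_\infty^3$ exactly, whereas the paper keeps this factor and must invoke the asymptotic $(q;q)_\infty \exp(\pi^2M/12)/\sqrt{M\pi}\to 1$ from Zhang (or Corwin--Knizel); your route removes that external ingredient at no cost, and your tail estimate even yields the slightly sharper remainder $O(e^{-\pi^2 M})$ in place of the paper's $O(e^{-\pi^2 M/2})$, though neither improvement affects the final constants.
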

\begin{proof}
In view of the product formula \eqref{theta1_product} combined  with \eqref{q-Gamma} we have the following representation
\begin{equation}
\label{Theta2Gammaq}
|\Gamma_q(1+ \i x)|^2  =
2 \sin(\tfrac{x}M)\frac{ e^{-\frac{1}{4M}} (q;q)_{\infty}^3}
{\theta_1\left(\frac{x }{ M \pi} \vert \frac{\i}{M \pi}\right)}.
\end{equation}
We use  \eqref{theta_1_over_tau} to rewrite  \eqref{Theta2Gammaq} as follows
$$
|\Gamma_q(1+ \i x)|^2  =\left(\frac{(q;q)_\infty\,\exp(\frac{\pi^2M}{12})}{\sqrt{M\pi}}\right)^3\,e^{-\frac{1}{4M}}\,\frac{\sin(x/M)}{x/M}\,\frac{2\sinh(\pi x)}{v(x,M)}\,\frac{\pi x}{\sinh(\pi x)}\,e^{\frac{x^2}{M}},
$$
where
\begin{equation}\label{vxm}
 v(x,M):=
- \i e^{ \pi^2 M/4} \theta_1(\i x | \i \pi M)=2\sinh(\pi x)\left(1+S(x,M)\right),
\end{equation}
with
$$
S(x,M):= \sum\limits_{n\ge 1} (-1)^{n}\times\frac{\sinh( (2n+1) \pi x)}{\sinh(\pi x)} \;e^{-\pi^2 M n(n+1)}.
$$

In view of  formula (35) in \cite{Zhang2008} with $\gamma=\pi$, $n=M^2$ and $a=1/2$ (or \cite[Proposition 2.3]{corwin-knizel2021stationary}) we have
$$
\frac{(q;q)_\infty\,\exp(\frac{\pi^2M}{12})}{\sqrt{M\pi}}\to 1 \quad \mbox{as $M\to\infty$}.$$
Consequently, for $\delta>0$ and $M$ large enough we have
\begin{equation}\label{qqM}
1-\delta<\left(\frac{(q;q)_\infty\,\exp(\tfrac{\pi^2M}{12})}{\sqrt{M\pi}}\right)^3\,e^{-\frac{1}{4M}}<1+\delta.
\end{equation}

For $x \in [-\frac{\pi M}2, \frac{\pi M}2]$, applying bounds \eqref{sinh_bounds}, we obtain:
\begin{align*}
&\bigg|\frac{\sinh((2n+1) \pi x)}{\sinh(\pi x)} \bigg|=
(2n+1) \bigg|\frac{\sinh( (2n+1) \pi  x)}{ (2n+1) \pi x} \bigg|
\times \bigg|\frac{\pi x}{\sinh(\pi x)} \bigg|
\\& \;\;\;  \le  (2n+1) \times e^{(2n+1)\pi|x|}
 \times  10  e^{-4 \pi |x|/5}
 \leq
 10 (2n+1) e^{\pi^2 M (n+ 1/2)}.
\end{align*}
 Consequently,
\begin{equation*}
|S(x,M)| \le 10 \sum\limits_{n\ge 1} (2n+1) e^{-\pi^2 M (n^2- 1/2)}
\leq
 10  e^{-\pi^2 M/2 } \times \Big[ \sum\limits_{n\ge 1} (2n+1) e^{-\pi^2 (n^2-1) }\Big] =
C_1 e^{-\pi^2M/2}
\end{equation*}
for an absolute constant $C_1>0$.
Therefore, for $\eps>0$ and for $M$ large enough we have
\begin{equation}\label{theta_estimate}
1-\eps\le \frac{v(x,M)}{2\sinh(\pi x)}\le 1+\eps,
\end{equation}
for all $x\in[-\pi M/2, \pi M/2]$.
In other words, we proved that
\begin{equation}\label{theta_estimate+}
- \i e^{ \pi^2 M/4} \theta_1(\i x | \i \pi M)=
2 \sinh(\pi x) (1+o(1)), \;\;\;
M\to \infty,\;\;
\end{equation}
uniformly in $x\in[-\pi M/2, \pi M/2]$.

Taking into account   \eqref{qqM}, \eqref{theta_estimate} and the trivial estimate
$$
\frac{2}{\pi}\le \frac{\sin(x/M)}{x/M}\le 1,\quad x \in [-\tfrac{\pi M}2, \tfrac{\pi M}2],
$$
we get
\begin{equation*}\label{deleps}
\frac{2}{\pi}\frac{1-\delta}{1+\eps}\,e^{x^2/M} \frac{\pi |x|}{|\sinh(\pi x)|}\le |\Gamma_q(1+\i x)|^2\le
\frac{1+\delta}{1-\eps}\,e^{x^2/M} \frac{\pi |x|}{|\sinh(\pi x)|},
\end{equation*}
which holds for  $M$ large enough and for all $x \in [-{\pi M}/2, {\pi M}/2]$. Setting   $\eps=\delta=1/2$ we get
 \begin{equation}\label{gamma_bound_proof1}
\frac{2}{3\pi} \frac{\pi |x|}{|\sinh(\pi x)|}<\frac{2}{3\pi}  e^{x^2/M} \frac{\pi |x|}{|\sinh(\pi x)|} <|\Gamma_q(1+\i x)|^2<3 e^{x^2/M} \frac{\pi |x|}{|\sinh(\pi x)|}.%
\end{equation}

In the range $x \in [-\pi M/2, \pi M/2]$ we have $x^2/M \le \pi |x|/2$. This fact, the  bounds \eqref{sinh_bounds} and \eqref{gamma_bound_proof1} imply that for all $M$ large enough
\begin{equation*}
\frac{2}{3\pi}  e^{-\pi |x|} <|\Gamma_q(1+\i x)|^2<30  e^{-\frac{3}{10} \pi |x|},%
\end{equation*}
from which the desired result \eqref{q_Gamma_bound2} follows by taking the square root.
\end{proof}

\begin{lemma}\label{lemma_q_Gamma_bound4}
Let $q=e^{-2/M}$ and assume that $\beta \in (\pi/4,3\pi/4)$.
 There exist $A>0$ and $M^*>0$ (which may depend on $\beta$) such that
\begin{equation}\label{ratio_Gammas_bound}
\frac{|\Gamma_q(1-\i \beta M+ \i x)|}{|\Gamma_q(1-\i \beta M)|}<A e^{\pi x/2}
\end{equation}
for all $M\ge M^*$ and $x\in [0,\pi M/2]$.
\end{lemma}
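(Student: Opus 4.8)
The plan is to reduce the whole statement to a one–variable estimate for $g(t):=|\Gamma_q(1+\i t)|$. Indeed $\Gamma_q(1-\i\beta M)=\Gamma_q(1+\i(-\beta M))$ and $\Gamma_q(1-\i\beta M+\i x)=\Gamma_q(1+\i(x-\beta M))$, so the left-hand side of \eqref{ratio_Gammas_bound} is exactly $g(x-\beta M)/g(-\beta M)$. By Lemma \ref{Fact2} the function $g$ is periodic with period $M\pi$, so for any real $t$ I may replace $t$ by its representative $\bar t\in[-M\pi/2,M\pi/2]$ modulo $M\pi$, with $g(t)=g(\bar t)$. (The hypothesis $\beta\in(\pi/4,3\pi/4)$ will not actually be used; the estimate below holds for every real $\beta$ and every $x>0$.)

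On the fundamental domain I would feed in the sharp two-sided estimate \eqref{gamma_bound_proof1}, written as $g(t)^2=c(t)\,e^{\Phi(t)}$ with
\[
\Phi(t):=\frac{t^2}{M}+\log\frac{\pi|t|}{|\sinh(\pi t)|},\qquad c(t)\in\left(\tfrac{2}{3\pi},\,3\right)\quad(|t|\le\tfrac{M\pi}{2}).
\]
Let $\Psi$ be the $M\pi$-periodic extension of $\Phi|_{[-M\pi/2,M\pi/2]}$, i.e.\ $\Psi(t)=\Phi(\bar t)$; then $\log g(t)^2=\Psi(t)+\log c(\bar t)$ for all real $t$. Writing $s=-\beta M$, this gives
\[
\log\frac{g(s+x)^2}{g(s)^2}=\bigl(\Psi(s+x)-\Psi(s)\bigr)+\log\frac{c(\overline{s+x})}{c(\bar s)}\le\bigl(\Psi(s+x)-\Psi(s)\bigr)+\log\frac{9\pi}{2}.
\]

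The crux is the Lipschitz bound $|\Psi'(t)|<\pi$. Since $|\Psi'|$ is even and $M\pi$-periodic it suffices to bound it on $(0,M\pi/2)$, where $\Psi'(t)=\Phi'(t)=\tfrac{2t}{M}+\tfrac1t-\pi\coth(\pi t)$. For the upper bound I would use $\pi\coth(\pi t)>1/t$ (equivalently $\tanh u<u$ for $u=\pi t>0$) to get $\tfrac1t-\pi\coth(\pi t)<0$, hence $\Psi'(t)<\tfrac{2t}{M}\le\pi$ because $t\le M\pi/2$. For the lower bound I would write $\pi\coth(\pi t)=\pi+\tfrac{2\pi}{e^{2\pi t}-1}$ and use $e^{2\pi t}-1>2\pi t$, so that $\tfrac{2\pi}{e^{2\pi t}-1}<\tfrac1t$ and $\Psi'(t)>\tfrac{2t}{M}+\tfrac1t-\pi-\tfrac1t=\tfrac{2t}{M}-\pi>-\pi$. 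The function $\Psi$ is continuous and piecewise $C^1$ with $|\Psi'|<\pi$ on each smooth piece, hence Lipschitz, so integrating the one-sided inequality $\Psi'<\pi$ over $[s,s+x]$ yields $\Psi(s+x)-\Psi(s)<\pi x$. Substituting into the previous display gives $g(s+x)^2/g(s)^2<\tfrac{9\pi}{2}e^{\pi x}$, which is \eqref{ratio_Gammas_bound} with $A=\sqrt{9\pi/2}$, valid for all $M\ge M^*$ with $M^*$ the threshold supplied by Lemma \ref{lemma_q_Gamma_bound2}. I expect the bound $|\Psi'|<\pi$ to be the main obstacle: it rests on the exact matching of the quadratic drift $\tfrac{2t}{M}$ (which is $\le\pi$ precisely on $|t|\le M\pi/2$) against the $\coth$ correction, and it is what pins the exponential rate to $e^{\pi x/2}$; the cruder two-sided bound \eqref{q_Gamma_bound2}, whose exponents $\tfrac{\pi}{2}$ and $\tfrac{3\pi}{20}$ do not match, would lose a factor exponential in $M$ and is therefore insufficient.
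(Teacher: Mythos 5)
Your proof is correct, and it takes a genuinely different route from the paper's. The paper applies the modular transformation \eqref{theta_1_over_tau} twice to write the squared ratio as $\frac{\sin(\beta-x/M)}{\sin\beta}\,\frac{\theta_1(\i\beta M|\i\pi M)}{\theta_1(\i(\beta M-x)|\i\pi M)}\,e^{x^2/M-2\beta x}$ and then argues in two cases: for $\beta\in(\pi/4,\pi/2]$ both theta arguments stay in the range where $\theta_1(\i w|\i\pi M)\sim 2e^{-\pi^2M/4}\sinh(\pi w)$ applies, while for $\beta\in(\pi/2,3\pi/4)$ it first converts $\theta_1$ to $\theta_4$ via \eqref{theta_1_4} to shift $\beta$ to $\tilde\beta=\beta-\pi/2\in(0,\pi/4)$; the hypothesis $\beta\in(\pi/4,3\pi/4)$ is what keeps $1/\sin\beta$ bounded and the arguments in range. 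You instead combine the $M\pi$-periodicity of $t\mapsto|\Gamma_q(1+\i t)|$ (Lemma \ref{Fact2}) with the two-sided profile estimate \eqref{gamma_bound_proof1} and reduce everything to the elementary Lipschitz bound $\bigl|\tfrac{d}{dt}\bigl(\tfrac{t^2}{M}+\log\tfrac{\pi t}{\sinh\pi t}\bigr)\bigr|<\pi$ on $(0,M\pi/2)$, which I have checked: $\tfrac1t-\pi\coth(\pi t)<0$ gives the upper bound and $\pi\coth(\pi t)<\pi+\tfrac1t$ gives the lower one, and the periodic extension remains $\pi$-Lipschitz since $\Phi$ is even. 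This buys you a case-free argument, a constant $A=3\sqrt{\pi/2}$ that is uniform in $\beta$, and validity for all real $\beta$ and all $x\ge0$ rather than just $\beta\in(\pi/4,3\pi/4)$ and $x\in[0,\pi M/2]$; the price is that your argument is parasitic on \eqref{gamma_bound_proof1}, which is an intermediate display inside the proof of Lemma \ref{lemma_q_Gamma_bound2} rather than a stated result (it is, however, rigorously established there for all $M$ large and all $x\in[-\pi M/2,\pi M/2]$, so the citation is legitimate — ultimately both proofs rest on the same theta-function asymptotics \eqref{theta_estimate+}). The paper's case analysis does retain slightly sharper $\beta$-dependent exponents (e.g.\ $e^{x(3\pi/2-2\beta)}$ in the first case), which your uniform Lipschitz constant deliberately discards.
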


\begin{proof}
Expression \eqref{Theta2Gammaq} with modular transformation \eqref{theta_1_over_tau} applied twice gives
\begin{equation}\label{lemma4_proof1}
\frac{|\Gamma_q(1-\i \beta M+ \i x)|^2}{|\Gamma_q(1-\i \beta M)|^2}
=\frac{\sin(\beta-x/M)}{\sin(\beta)}\,
 \frac{\theta_1(\i \beta M | \i \pi M)}
{\theta_1(\i (\beta M-x) | \i \pi M)}\,\,e^{x^2/M - 2 \beta x}.
\end{equation}
 We consider two cases.  If $\beta \in (\pi/4,\pi/2]$, %
 then $1/\sin \beta\leq \sqrt{2}$ and the first arguments in both $\theta_1$ functions   satisfy   $|\beta M|\leq M\pi/2$ and  $|\beta M-x|\leq  M \pi/2$.
Using  estimate \eqref{theta_estimate+}   we obtain
$$
\frac{\theta_1(\i \beta M | \i \pi M)}
{\theta_1(\i (\beta M-x) | \i \pi M)}=  \frac{\sinh(\pi \beta M)}
{\sinh(\pi (\beta M -x))}(1+o(1)),\;\;\;\mbox{as }\;M\to\infty,
$$
 uniformly in $x\in[-\frac{M\pi}{2},\,\frac{M\pi}{2}]$.
We also write
\begin{multline*}
  \sin(\beta-\tfrac xM) \frac{\sinh(\pi \beta M)}
{\sinh(\pi (\beta M -x))}
=\frac{-1}{2\pi M}\times \frac{\sin(\beta-x/M)}{\beta-x/M}\times
\frac{2\pi (x-\beta M)} {1-e^{2\pi (x-\beta M)}} \times (1-e^{-2\pi \beta M})
\times e^{\pi x}.
\end{multline*}
The second and fourth factors on the right-hand side are bounded by 1 in absolute value, and using  the last bound in \eqref{sinh_bounds} we see that the product of the first and third factors is bounded by $1+\pi$.
Using these results and the bound $x^2/M \le \pi x/2$ on $x\in
[0,\pi M/2]$ we see that in the case $\beta \in (\pi/4, \pi/2]$
we have
$$
\frac{|\Gamma_q(1-\i \beta M+ \i x)|^2}{|\Gamma_q(1-\i \beta M)|^2}
<A_1 e^{\pi x/2-2\beta x+\pi x} =A_1e^{x(3\pi/2-2\beta)}\le A_1 e^{\pi x}
$$
for some absolute constant $A_1$, and this implies \eqref{ratio_Gammas_bound}.

Next we consider the case $\beta \in (\pi/2,3\pi/4)$.  We use formula \eqref{theta_1_4}, which together with \eqref{lemma4_proof1} gives %
\begin{equation}\label{theta1_theta4_ratio}
\frac{|\Gamma_q(1-\i \beta M+ \i x)|^2}{|\Gamma_q(1-\i \beta M)|^2}
=\frac{\sin(\beta-x/M)}{\sin(\beta)}\,
 \frac{\theta_4(\i \tilde \beta M | \i \pi M)}
{\theta_4(\i (\tilde \beta M-x) | \i \pi M)}\,e^{x^2/M +(\pi- 2 \beta) x}
\end{equation}
with $\tilde \beta=\beta-\pi/2 \in (0,\pi/4)$.

We use the same method as in the proof of Lemma \ref{lemma_q_Gamma_bound2}.  In view of \eqref{def_theta4} we write
\begin{equation}\label{theta4_est}
 \theta_4(\i w | \i \pi M)=1+\tilde S(w,M),
\end{equation}
and estimate
$$
\tilde S(w,M):=2 \sum\limits_{n\ge 1} (-1)^n e^{-\pi^2 M n^2 }\cosh(2 n \pi w)
$$
as follows: for any small $\eps>0$,  $M\ge 1$ and
$|w| \le (1-\eps) {M \pi}/2$ we have
\begin{multline*}
    |\tilde S(w,M)| \le 2 \sum\limits_{n\ge 1} e^{-\pi^2 M n^2 + 2n \pi |w|}
    \le 2 \sum\limits_{n\ge 1} e^{-\pi^2 M n^2 + n \pi^2  (1-\eps) M  }=
\\ =
2 e^{-\pi^2 M \eps}\sum\limits_{n\ge 1} e^{-\pi^2 M (n^2-n) - (n -1)\pi^2  \eps M }
\le 2 e^{-\pi^2 M \eps}\sum\limits_{n\ge 1} e^{-\pi^2 M (n^2-n) }\le\,C_2 e^{-\pi^2 M \eps }
\end{multline*}
for an absolute constant $C_2>0$,

From the above estimate and \eqref{theta4_est} it follows that for arbitrary $\eps\in(0,1)$, as $M \to +\infty$,
\begin{equation}\label{theta4_estimate}
 \theta_4(\i w | \i \pi M)=1+o(1),\quad \mbox{uniformly in}\;\;w\in [-(1-\eps)\tfrac{\pi M}2, (1-\eps)\tfrac{\pi M}2].
\end{equation}

Since  $\tilde \beta \in (0,\pi/4)$ and $x\in [0, \pi M/2]$, it is easy to see that
$$\tilde \beta M - x,\;\tilde \beta M\in \left[-(1-\eps)\tfrac{\pi M}2, (1-\eps)\tfrac{\pi M}2\right]\quad \mbox{with}\;\;\eps=2\tilde \beta/\pi.$$

Therefore,  applying  \eqref{theta4_estimate} for $w =\tilde \beta M - x $ and $w=\tilde \beta M$ to the second factor in \eqref{theta1_theta4_ratio} we conclude that
 \begin{align}\label{lemma4_proof7}
\frac{|\Gamma_q(1-\i \beta M+ \i x)|^2}{|\Gamma_q(1-\i \beta M)|^2}
&=
\frac{\sin(\beta-x/M)}{\sin(\beta)}
e^{x^2/M  +(\pi- 2 \beta) x} \times (1+o(1))
\end{align}
as $M\to\infty$ uniformly in $x\in [0,\pi M/2]$.  Since $\beta \in (\pi/2, 3\pi/4)$, the right-hand side in \eqref{lemma4_proof7} can be bounded by $C\exp(\pi x)$ for $x\in[0,\,\pi M/2]$  and we again conclude that
\eqref{ratio_Gammas_bound} holds.
\end{proof}

  \begin{lemma}\label{L:f_M}
  Fix $u\in\RR$ and let    $q_{M}=e^{-2/M}$, $M=1,2\dots$.
Then there exist constants $A,B,M_*>0$ that depend only on $u$ such that for all $M>M_*$ we have
\begin{equation}\label{GqGq}
   {\mathbf 1}_{\{|s|\leq M \pi/2\}} \left\vert\Gamma_{q_{M}}(1+\i (s+u))\right\vert\leq A e^{-B |s|}
\end{equation}
 for all $s\in \RR$.
  \end{lemma}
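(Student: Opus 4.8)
The plan is to reduce \eqref{GqGq} to the two facts already proved for $\Gamma_{q_M}$: the exponential decay estimate of Lemma~\ref{lemma_q_Gamma_bound2}, which gives $|\Gamma_{q_M}(1+\i x)|<Ce^{-\frac{3\pi}{20}|x|}$ for $|x|\le \pi M/2$ and $M\ge M^*$, and the $M\pi$-periodicity of $t\mapsto|\Gamma_{q_M}(1+\i t)|$ from Lemma~\ref{Fact2}. Writing $x=s+u$, I first note that off the set $\{|s|\le M\pi/2\}$ the left-hand side of \eqref{GqGq} vanishes, so it suffices to bound $|\Gamma_{q_M}(1+\i x)|$ under the constraint $|s|\le M\pi/2$. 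The only complication is that $x=s+u$ may leave the interval $[-M\pi/2,M\pi/2]$ on which Lemma~\ref{lemma_q_Gamma_bound2} is stated, but it can do so by at most $|u|$; this is precisely where periodicity is needed.

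First I would fix $M_*$ large enough (depending only on $u$) that $M_*\ge M^*$ and $M_*\pi/2>|u|$. For $M>M_*$ and $|s|\le M\pi/2$ the point $x=s+u$ lies within distance $|u|<M\pi/2$ of $[-M\pi/2,M\pi/2]$, so using the $M\pi$-periodicity I may choose $\tilde x\equiv x\pmod{M\pi}$ with $\tilde x\in(-M\pi/2,M\pi/2]$, the reduction subtracting $0$ or $\pm M\pi$, and $|\Gamma_{q_M}(1+\i x)|=|\Gamma_{q_M}(1+\i\tilde x)|$. Lemma~\ref{lemma_q_Gamma_bound2} then applies to $\tilde x$ and yields $|\Gamma_{q_M}(1+\i\tilde x)|<Ce^{-\frac{3\pi}{20}|\tilde x|}$. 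The key elementary step is the inequality $|\tilde x|\ge |s|-|u|$, valid in both regimes: if no reduction occurred then $\tilde x=x$ and this is the triangle inequality $|s+u|\ge|s|-|u|$, while if a reduction by $\pm M\pi$ occurred then $x$ overshot the boundary of $[-M\pi/2,M\pi/2]$, which forces $|\tilde x|\ge M\pi/2-|u|\ge|s|-|u|$ since $|s|\le M\pi/2$. Combining the two estimates gives $|\Gamma_{q_M}(1+\i x)|<Ce^{-\frac{3\pi}{20}(|s|-|u|)}=\bigl(Ce^{\frac{3\pi}{20}|u|}\bigr)e^{-\frac{3\pi}{20}|s|}$, which is exactly the asserted bound with $A=Ce^{\frac{3\pi}{20}|u|}$ and $B=\frac{3\pi}{20}$.

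The main obstacle, and really the only subtlety, is the boundary overshoot handled via the modular reduction in the second paragraph; everything else is a direct invocation of Lemmas~\ref{lemma_q_Gamma_bound2} and~\ref{Fact2} together with the triangle inequality. I expect no genuine analytic difficulty, only the bookkeeping needed to organize the reduction cleanly and to record that $A$, $B$, and $M_*$ depend on $u$ alone. I note finally that this lemma is tailored for the factor $f^{(1)}$ in the proof of Theorem~\ref{Thm:3.2}, where it is applied twice, with $u$ replaced by $\pm u/2$, so that the product $|\Gamma_{q_M}(1+\i(s+u/2))\,\Gamma_{q_M}(1+\i(s-u/2))|$ inherits the decay $e^{-\frac{3\pi}{10}|s|}$.
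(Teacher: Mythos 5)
Your proof is correct and follows essentially the same route as the paper: both split into the cases $|s+u|\le M\pi/2$ and $|s+u|>M\pi/2$, invoking Lemma~\ref{lemma_q_Gamma_bound2} directly in the first and the $M\pi$-periodicity from Lemma~\ref{Fact2} in the second. The only (harmless) difference is that you bound the reduced point via $|\tilde x|\ge|s|-|u|$ instead of using the monotonicity statement of Lemma~\ref{Fact2} to compare with $|\Gamma_q(1-\i M\pi/4)|$, which yields the slightly better constant $B=3\pi/20$ in place of the paper's $3\pi/40$.
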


\begin{proof} %
We will show that \eqref{GqGq} holds with
$A=Ce^{3\pi|u|/20}$, %
$B=3\pi/40$ and $M_*=\max\{M^*,4|u|/\pi\}$, where %
$M^*$  and $C$ are
from  Lemma \ref{lemma_q_Gamma_bound2}.  Let $M>M_*$ and $|s|\leq M\pi/2$.

  We consider two cases.
 If $|s+u|\leq M\pi/2$,
 then \eqref{GqGq} follows from
 Lemma \ref{lemma_q_Gamma_bound2}.
 On the other hand, if $|s+u|> M\pi/2$, then with $M>M_*=\max\{M^*,4|u|/\pi\}\geq 4|u|/\pi$  we have $|u|\leq M\pi/4$. Recalling that  $|s|\leq M\pi/2$, we get $M\pi/2<|s+u|\leq M\pi/2+M\pi/4=3M\pi/4$.

By Lemma \ref{Fact2},  function $x\mapsto |\Gamma_q(1+\i x)|$ is periodic  with period $M\pi $  and is increasing  on the interval $[-M\pi/2,-M\pi/4]$. With $x=s+u\in[M\pi/2,3M\pi/4]$ we have $x-M\pi\in[-M\pi/2,-M\pi/4]$ so we get
$$ \left\vert\Gamma_q(1+\i x)\right\vert =\left\vert\Gamma_q(1+\i (x-M\pi))\right\vert \leq \left\vert\Gamma_q(1-\i M \pi/4)\right\vert \leq
C e^{-\frac{3\pi^2}{80} M}\leq  %
C  e^{-\frac{3\pi}{40}|s|},$$
where we used Lemma \ref{lemma_q_Gamma_bound2} and then the bound $M\ge   \tfrac2{\pi} |s|$. Thus in this case
\eqref{GqGq} follows, too.

\end{proof}
We also need the following result.%
\begin{lemma}\label{P:gamma}
 There exist constants $A,B,M_*>0$ such that with $q=e^{-2/M}$, for all $M>M_*$ and all $u\in\RR$ we have
  \begin{equation}
    \label{Gq}
      {\mathbf 1}_{\{|u|<M \pi/2\}} \frac{1}{|\Gamma_q(\i u)|^2}\leq A u^2 e^{B|u|}.
  \end{equation}
\end{lemma}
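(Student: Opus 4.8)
The plan is to reduce the claim to the two-sided bound on $|\Gamma_q(1+\i u)|$ already established in Lemma \ref{lemma_q_Gamma_bound2} by exploiting the functional equation of the $q$-Gamma function. First I would record the recurrence
$$
\Gamma_q(z+1)=\frac{1-q^z}{1-q}\,\Gamma_q(z),
$$
which follows immediately from the definition \eqref{q-Gamma} together with the factorization $(q^z;q)_\infty=(1-q^z)(q^{z+1};q)_\infty$. Specializing to $z=\i u$ gives $\Gamma_q(1+\i u)=\frac{1-q^{\i u}}{1-q}\,\Gamma_q(\i u)$, and hence
$$
\frac{1}{|\Gamma_q(\i u)|^2}=\frac{|1-q^{\i u}|^2}{(1-q)^2}\cdot\frac{1}{|\Gamma_q(1+\i u)|^2}.
$$

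Next I would estimate the three factors separately. Since $q=e^{-2/M}$ and $u\in\RR$, we have $q^{\i u}=e^{-2\i u/M}$, so
$$
|1-q^{\i u}|^2=2-2\cos(2u/M)=4\sin^2(u/M)\le \frac{4u^2}{M^2},
$$
using $|\sin t|\le|t|$. For the middle factor, the elementary limit $M(1-q)\to 2$ as $M\to\infty$ provides an $M_0$ with $1-q\ge 1/M$, so $(1-q)^2\ge 1/M^2$ for all $M\ge M_0$. The two powers of $M^{-2}$ therefore cancel, leaving the clean bound $\frac{|1-q^{\i u}|^2}{(1-q)^2}\le 4u^2$.

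Finally, on the set $\{|u|<M\pi/2\}$ the variable $x=u$ lies exactly in the range where Lemma \ref{lemma_q_Gamma_bound2} is valid, so the lower estimate in \eqref{q_Gamma_bound2} gives $|\Gamma_q(1+\i u)|>C^{-1}e^{-\pi|u|/2}$ for all $M\ge M^*$, whence $1/|\Gamma_q(1+\i u)|^2<C^2 e^{\pi|u|}$. Combining the three bounds yields
$$
{\mathbf 1}_{\{|u|<M\pi/2\}}\,\frac{1}{|\Gamma_q(\i u)|^2}\le 4C^2\,u^2\,e^{\pi|u|},
$$
which is \eqref{Gq} with $A=4C^2$, $B=\pi$, and $M_*=\max\{M^*,M_0\}$. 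I expect no genuine obstacle here beyond bookkeeping: the only points demanding care are that the indicator $\{|u|<M\pi/2\}$ places $u$ precisely in the interval of validity of Lemma \ref{lemma_q_Gamma_bound2}, and that the cancellation of the $M^{-2}$ factors rests on the two-sided control $1/M\le 1-q$ for large $M$ paired with $|\sin(u/M)|\le|u/M|$.
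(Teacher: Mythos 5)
Your proposal is correct and follows essentially the same route as the paper: both factor out $\frac{|1-q^{\i u}|^2}{(1-q)^2}=4\sin^2(u/M)/(1-q)^2\le C u^2$ via the $q$-Gamma functional equation and then invoke the lower bound of Lemma \ref{lemma_q_Gamma_bound2} on $|\Gamma_q(1+\i u)|$ for $|u|<\pi M/2$. The only cosmetic difference is that the paper bounds $1/(M(1-q))^2$ and $(\sin(u/M)/(u/M))^2$ separately rather than using $1-q\ge 1/M$; the bookkeeping is equivalent.
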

\begin{proof}  Definition \eqref{q-Gamma} yields
  $$\frac{1}{|\Gamma_q(\i u)|^2}= \frac{|1-q^{\i u}|^2}{(1-q)^2} \frac{1}{|\Gamma_q(1+\i u)|^2}.$$
 Since $|1-q^{\i u}|^2=4\sin^2(u/M)$, we get
  $$
  \frac{|1-q^{\i u}|^2}{(1-q)^2}=4u^2\,\frac{1}{(M(1-e^{-2/M}))^2}\,\left(\frac{\sin(u/M)}{u/M}\right)^2\leq C u^2.
  $$
Thus \eqref{Gq} follows from Lemma \ref{lemma_q_Gamma_bound2}.
\end{proof}
\subsection{Additional technical lemmas}
The following technical lemma is needed in the proof of Theorem \ref{Thm:3.2}.

\begin{lemma}\label{L:dc-2} Let $\wt a,\wt b$ be either real or complex conjugates.
If $\re(\wt a)\geq 0$,  $\re(\wt b)\geq 0$, $c\geq 0$ then for all $s \in\RR$ and all 
$q\in[0,1)$
we have %
\begin{equation}
  \label{*dc-2}
  \left\vert\frac{ (-\wt a q^{1+c+\i s},-\wt b q^{1+c+\i s};q)_\infty} { (-\wt a q,-\wt b q;q)_\infty }\right\vert\leq 1.
\end{equation}
\end{lemma}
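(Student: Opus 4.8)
The plan is to expand both $q$-Pochhammer symbols as infinite products and to estimate the quotient factor by factor, after first pairing the $\wt a$- and $\wt b$-contributions carried by the same index $k$. Setting $\zeta := q^{c+\i s}$, so that $|\zeta| = q^c \le 1$, the ratio on the left of \eqref{*dc-2} equals
$$
\prod_{k=0}^\infty \frac{(1+\wt a\,\zeta\, q^{1+k})(1+\wt b\,\zeta\, q^{1+k})}{(1+\wt a\, q^{1+k})(1+\wt b\, q^{1+k})}.
$$
Both the numerator and the denominator products converge absolutely (since $q<1$), and the denominator is nonzero, so this regrouping is legitimate. I would then reduce matters to bounding the modulus of each factor by $1$.

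The key observation is that in both admissible cases the elementary symmetric functions $p:=\wt a+\wt b$ and $r:=\wt a\wt b$ are real and nonnegative: if $\wt a,\wt b\ge 0$ are real this is immediate, while if $\wt b=\overline{\wt a}$ then $p=2\re(\wt a)\ge 0$ and $r=|\wt a|^2\ge 0$. Fixing $k$ and writing $w:=q^{1+k}>0$, the $k$-th factor has numerator $1+pw\zeta+rw^2\zeta^2$ and denominator $1+pw+rw^2$, the latter being $\ge 1>0$. The triangle inequality, together with $p,r\ge 0$, $w>0$ and $|\zeta|\le 1$, then yields
$$
|1+pw\zeta+rw^2\zeta^2|\le 1+pw|\zeta|+rw^2|\zeta|^2\le 1+pw+rw^2,
$$
so each factor has modulus at most $1$; multiplying over $k$ gives \eqref{*dc-2}.

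The one point that genuinely requires care---and the conceptual crux of the argument---is that the pairing must be performed \emph{before} invoking the triangle inequality. Indeed, for complex $\wt a$ an individual factor $|1+\wt a w\zeta|/|1+\wt a w|$ can exceed $1$ (for instance $\wt a=\i$, $\zeta=-\i$ gives the ratio $(1+w)^2/(1+w^2)>1$), because the triangle inequality controls the numerator but overshoots the denominator $|1+\wt a w|$. Combining the conjugate pair first converts the relevant coefficients into the real nonnegative quantities $p$ and $r$, for which the elementary bound above applies verbatim; this simultaneously covers the real case, making the treatment uniform.
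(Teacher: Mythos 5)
Your proof is correct. It shares with the paper's argument the essential first move --- pairing the $\wt a$- and $\wt b$-factors carrying the same index before estimating, which, as you rightly emphasize with your $\wt a=\i$ counterexample, is what makes a factor-wise bound possible at all --- but the way you bound each paired factor is genuinely different and in fact simpler. The paper keeps the $n$-th factor in the form
$\abs{e^{\i\alpha}+|\wt a|\,q^{n+c+\i s}}\,\abs{e^{-\i\alpha}+|\wt a|\,q^{n+c+\i s}}\big/\big(\abs{e^{\i\alpha}+|\wt a|\,q^{n}}\,\abs{e^{-\i\alpha}+|\wt a|\,q^{n}}\big)$
and argues in two steps: first, that as $s$ varies the numerator --- a product of distances from a point moving on the circle of radius $|\wt a|\,q^{n+c}$ to the two points $e^{\pm\i\alpha}$ --- is maximized at $s=0$; and second, that the resulting ratio at $s=0$ is at most $1$ because $t\mapsto|1+te^{\i\alpha}|$ is increasing for $t\ge 0$ when $\cos\alpha\ge 0$ and $q^{n+c}\le q^{n}$. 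Your expansion of each paired factor into the elementary symmetric functions $p=\wt a+\wt b\ge 0$ and $r=\wt a\wt b\ge 0$ collapses both steps into a single application of the triangle inequality, since the denominator $1+pw+rw^2$ is then a positive real number dominating $1+pw|\zeta|+rw^2|\zeta|^2$ term by term, with $|\zeta|=q^{c}\le 1$. A side benefit is that your argument treats the real and complex-conjugate cases in one stroke, whereas the paper writes out only the complex case and states that the real case ``requires some minor modifications and is omitted.''
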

\begin{proof}We first prove the bound for the case of complex-conjugate parameters.
Write $\wt a=r e^{\i \alpha}$, $\wt b=r e^{-\i \alpha}$ with $\alpha\in[-\pi/2,\pi/2]$ and $r>0$. (The case $r=0$ is trivial.)
Consider one factor of the infinite product:

$$
\frac{|(1+r q^{n+c +\i  s }e^{\i \alpha})(1+r q^{n+c +\i  s }e^{-\i \alpha})|}
{|(1+r q^n e^{\i \alpha})(1+r q^n e^{-\i \alpha})|}=\frac{|e^{\i \alpha}+r q^{n+c +\i  s } |\cdot |e^{-\i  \alpha}+r q^{n+c +\i  s } |}
{|e^{\i \alpha}+r q^n |\cdot|e^{-\i \alpha}+r q^n |},\; n\geq 1.
$$
As $s $ varies through the real line, the point $z=-r q^{n+c +\i  s }$ moves on the circle $|z|=r q^{n+c}$. One readily checks that the  product in the numerator is  largest when $\re (z)$ is minimal, i.e., when $s =0$.
Thus
$$\frac{|(1+r q^{n+c +\i  s }e^{\i \alpha})(1+r q^{n+c +\i  s }e^{-\i \alpha})|}
{|(1+r q^n e^{\i \alpha})(1+r q^n e^{-\i \alpha})|}\leq
\frac{|(1+r q^{n+c}e^{\i \alpha})(1+r q^{n+c }e^{-\i \alpha})|}
{|(1+r q^n e^{\i \alpha})(1+r q^n e^{-\i \alpha})|}.$$
To end the proof, we note that
$$
\frac{|1+r q^{n+c}e^{\i \alpha}|^2}
{|1+r q^n e^{\i \alpha}|^2}\leq 1.
$$

The proof for the case of real $\wt a,\wt b \in [0,\infty)$ requires some minor modifications and is omitted. (This case is not used in the proof of Theorem \ref{Thm:3.2}.)
\end{proof}

The proof of Theorem \ref{Thm-q=1} uses the following  technical estimate:
\begin{lemma}\label{Prop-Alexey}
 Let $q=e^{-2/M}$. If $a=-q e^{\i\alpha}$, $b=-q e^{-\i \alpha}$  and $|\alpha|<\pi/2$
 then there exist $A>0$, $B>0$ and $M_*>0$   such that
 \begin{equation}\label{ab/qaqb}
  {\mathbf 1}_{\{|u|<\pi M/2 \}} \frac{(a,b;q)_{\infty}^2}{|(aq^{\i u/2},bq^{\i u/2};q)_{\infty}|^2}\leq A e^{B|u|}
 \end{equation}
 for all $M\geq M_*$ and all $u\in\RR$.
\end{lemma}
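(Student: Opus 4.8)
The plan is to reduce the ratio of $q$-Pochhammer symbols to a product of ratios of $q$-Gamma functions and then invoke Lemma \ref{lemma_q_Gamma_bound4}. Since $q=e^{-2/M}\in(0,1)$ is real and $b=\bar a$, I first record that $(a,b;q)_\infty=(a;q)_\infty(b;q)_\infty=|(a;q)_\infty|^2>0$ (each factor $1+q^{k+1}e^{\pm\i\alpha}$ has positive real part because $|\alpha|<\pi/2$), so the numerator is real and positive. Using $|(b;q)_\infty|=|(a;q)_\infty|$, the whole quantity factors as
$$
\frac{(a,b;q)_\infty^2}{|(aq^{\i u/2},bq^{\i u/2};q)_\infty|^2}
=\left(\frac{|(a;q)_\infty|}{|(aq^{\i u/2};q)_\infty|}\right)^2\left(\frac{|(b;q)_\infty|}{|(bq^{\i u/2};q)_\infty|}\right)^2.
$$

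Next I would write $a=q^{w_a}$ and $b=q^{w_b}$ with $w_a=1-\i\beta M$, $w_b=1-\i\beta' M$, where $\beta=(\pi+\alpha)/2$ and $\beta'=(\pi-\alpha)/2$; indeed $q^{w_a}=e^{\ln q+\i(\pi+\alpha)}=-qe^{\i\alpha}=a$, and analogously for $b$. The hypothesis $|\alpha|<\pi/2$ guarantees $\beta,\beta'\in(\pi/4,3\pi/4)$, which is exactly the range required by Lemma \ref{lemma_q_Gamma_bound4}. Using the definition \eqref{q-Gamma} in the rearranged form $(q^z;q)_\infty=(1-q)^{1-z}(q;q)_\infty/\Gamma_q(z)$ together with $aq^{\i u/2}=q^{w_a+\i u/2}$, I obtain
$$
\frac{(a;q)_\infty}{(aq^{\i u/2};q)_\infty}=(1-q)^{\i u/2}\,\frac{\Gamma_q(w_a+\i u/2)}{\Gamma_q(w_a)},
$$
and since $|(1-q)^{\i u/2}|=1$ this gives $|(a;q)_\infty|/|(aq^{\i u/2};q)_\infty|=|\Gamma_q(1-\i\beta M+\i u/2)|/|\Gamma_q(1-\i\beta M)|$, with the analogous identity for the $b$-factor and $\beta'$.

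For $0\le u<\pi M/2$ I would then set $x=u/2\in[0,\pi M/4]\subseteq[0,\pi M/2]$ and apply Lemma \ref{lemma_q_Gamma_bound4} to each of the two ratios, bounding each by a constant times $e^{\pi u/4}$; taking $M_*$ to be the larger of the two thresholds $M^*$ produced by the lemma for $\beta$ and $\beta'$, the displayed product is at most $A\,e^{\pi u}$, which is of the required form. To cover $u<0$ I would use the conjugation symmetry: from $q^{\i u/2}=e^{-\i u/M}$ and $b=\bar a$ one checks $\overline{aq^{\i u/2}}=bq^{-\i u/2}$ and $\overline{bq^{\i u/2}}=aq^{-\i u/2}$, so taking conjugates of Pochhammer symbols gives $|(aq^{\i u/2},bq^{\i u/2};q)_\infty|=|(aq^{-\i u/2},bq^{-\i u/2};q)_\infty|$; hence the left-hand side of \eqref{ab/qaqb} is even in $u$ and the case $u\ge 0$ suffices, while outside $|u|<\pi M/2$ the indicator vanishes and the bound is trivial.

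The genuine analytic work is already packaged in Lemma \ref{lemma_q_Gamma_bound4}, so there is no real obstacle here; the only points requiring care are the branch bookkeeping in $a=q^{w_a}$ (chosen so that $\beta,\beta'$ land in $(\pi/4,3\pi/4)$) and the two observations—that the prefactor $(1-q)^{\i u/2}$ has modulus one and that the expression is even in $u$—which are precisely what let the two $\Gamma_q$-ratios combine into the stated exponential bound.
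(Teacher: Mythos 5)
Your proof is correct and follows essentially the same route as the paper: rewrite each Pochhammer ratio via \eqref{q-Gamma} as $|\Gamma_q(1-\i\beta M+\i u/2)|/|\Gamma_q(1-\i\beta M)|$ with $\beta=(\pi+\alpha)/2$, $\beta'=(\pi-\alpha)/2\in(\pi/4,3\pi/4)$, reduce to $u\ge 0$ by the conjugation symmetry, and invoke Lemma \ref{lemma_q_Gamma_bound4}. The extra bookkeeping you supply (positivity of the numerator, the unimodular prefactor $(1-q)^{\i u/2}$, and the explicit evenness argument) only fills in details the paper leaves implicit.
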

\begin{proof} Replacing $u$ by $-u$ if needed, without loss of generality we may assume $u\geq 0$.
 In view of \eqref{q-Gamma} the left-hand side of \eqref{ab/qaqb}
can be rewritten in terms of $q$-Gamma functions. We have
\begin{equation*}
  \label{q/aq}
  \frac{|(a;q)_{\infty}|}{|(aq^{\i u/2};q)_{\infty}|}=
\frac{|\Gamma_q(1-\i \beta M + \i u/2)|}{|\Gamma_q(1-\i \beta M)|},
\end{equation*}
where $\beta=(\alpha+\pi)/2\in(\pi/4,3 \pi/4)$.
Similarly,
$$ \frac{|(b;q)_{\infty}|}{|(bq^{\i u/2};q)_{\infty}|}=
\frac{|\Gamma_q(1-\i \beta' M + \i u/2)|}{|\Gamma_q(1-\i \beta' M)|},$$
where $\beta'=(\pi-\alpha)/2\in (\pi/4,3 \pi/4)$.

 Therefore inequality \eqref{ab/qaqb} follows from Lemma \ref{lemma_q_Gamma_bound4}.%
\end{proof}

 \subsection*{Funding}
 This work was supported by Simons Foundation [Award Number: 703475 to W.~B.];
 Natural Sciences and
Engineering Research Council of Canada [to A.~K.];
 National Science Center Poland [project no.  2023/51/B/ST1/01535 to J.~W.];
 Taft Research Center at the University of Cincinnati.

\def\polhk#1{\setbox0=\hbox{#1}{\ooalign{\hidewidth
  \lower1.5ex\hbox{`}\hidewidth\crcr\unhbox0}}}

\end{document}